\definecolor{darkergreen}{rgb}{0.0, 0.5, 0.0}
\long\def\zhu#1{{\color{red}\footnotesize Xiang:\ #1}}
\long\def\hao#1{{\color{blue}\footnotesize Hao:\ #1}}
\long\def\rong#1{{\color{darkergreen} \footnotesize Rong:\ #1}}
\numberwithin{equation}{section}
\def\theequation{\arabic{section}.\arabic{equation}}
\newcommand{\be}{\begin{eqnarray}}
\newcommand{\ee}{\end{eqnarray}}
\newcommand{\ce}{\begin{eqnarray*}}
\newcommand{\de}{\end{eqnarray*}}
\newtheorem{theorem}{Theorem}[section]
\newtheorem{lemma}[theorem]{Lemma}
\newtheorem{remark}[theorem]{Remark}
\newtheorem{definition}[theorem]{Definition}
\newtheorem{proposition}[theorem]{Proposition}
\newtheorem{Examples}[theorem]{Example}
\newtheorem{corollary}[theorem]{Corollary}
\newenvironment{nouppercase}{%
  \renewcommand{\uppercasenonmath}[1]{}}{}
\newcommand{\rmb}[1]{\textcolor{black}{#1}}
\def\Wick#1{\,\colon\!\! #1 \!\colon}
\def\PPhi{\mathbf{\Phi}}
\def\eps{\varepsilon}
\def\p{\partial}
\def\[{{\Big[}}
\def\]{{\Big]}}
\def\<{{\langle}}
\def\>{{\rangle}}
\def\({{\Big(}}
\def\){{\Big)}}
\def\bx{{\mathbf{x}}}
\def\dif{{\mathord{{\rm d}}}}
\def\no{\nonumber}
\def\={&\!\!=\!\!&}
 \newcommand{\eqdef}{\stackrel{\mbox{\tiny def}}{=}}
\def\bC{{\mathbf C}}
\def\cE{{\mathcal E}}
\def\cF{{\mathcal F}}
\def\cG{{\mathcal G}}
\def\cH{{\mathcal H}}
\def\cI{{\mathcal I}}
\def\cJ{{\mathcal J}}
\def\cS{{\mathcal S}}
\def\mN{{\mathbb N}}
\def\mR{{\mathbb R}}
\def\mT{{\mathbb T}}
\def\mZ{{\mathbb Z}}
\def\1{{\mathbf{1}}}
\def\sL{{\mathscr L}}
\def\E{\mathbf E}
\def\geq{\geqslant}
\def\leq{\leqslant}
\def\ge{\geqslant}
\def\le{\leqslant}
\def\eps{\varepsilon}
\def\p{\partial}
\def\[{{\Big[}}
\def\]{{\Big]}}
\def\<{{\langle}}
\def\>{{\rangle}}
\def\({{\Big(}}
\def\){{\Big)}}
\def\bx{{\mathbf{x}}}
\def\dif{{\mathord{{\rm d}}}}
\def\no{\nonumber}
\def\={&\!\!=\!\!&}
\def\bt{\begin{theorem}}
\def\et{\end{theorem}}
\def\bl{\begin{lemma}}
\def\el{\end{lemma}}
\def\br{\begin{remark}}
\def\er{\end{remark}}
\def\bx{\begin{Examples}}
\def\ex{\end{Examples}}
\def\bd{\begin{definition}}
\def\ed{\end{definition}}
\def\bp{\begin{proposition}}
\def\ep{\end{proposition}}
\def\bc{\begin{corollary}}
\def\ec{\end{corollary}}
\def\geq{\geqslant}
\def\leq{\leqslant}
\def\ge{\geqslant}
\def\le{\leqslant}
 \def\R{\mathbb R}
 \def\R{\mathbb R}
\def\<{\langle} \def\>{\rangle}
\tikzset{
        dot/.style={circle,fill=black,inner sep=0pt, outer sep=0.7pt, minimum size=1mm},
        Phi/.style={white!40!red,thick,snake=coil,segment amplitude=0.6pt, segment length=2pt},
         Z/.style={black!40!green,thick,snake=coil,segment amplitude=0.6pt, segment length=2pt},
        C/.style={thick,black!20!blue},
          Cr/.style={thick,black!20!red},
            Cg/.style={thick,black!20!green},
       }
\begin{document}

\title[An  SPDE approach to perturbation theory of $\Phi^4_2$: asymptoticity and short distance behavior]{\LARGE An  SPDE approach to perturbation theory of $\Phi^4_2$: asymptoticity and short distance behavior}

\author[Hao Shen]{\large Hao Shen}
\address[H. Shen]{Department of Mathematics, University of Wisconsin - Madison, USA}
\email{pkushenhao@gmail.com}
%
%
\author[Rongchan Zhu]{\large Rongchan Zhu}
\address[R. Zhu]{Department of Mathematics, Beijing Institute of Technology, Beijing 100081, China; Fakult\"at f\"ur Mathematik, Universit\"at Bielefeld, D-33501 Bielefeld, Germany}
\email{zhurongchan@126.com}

\author[Xiangchan Zhu]{\large Xiangchan Zhu}
\address[X. Zhu]{ Academy of Mathematics and Systems Science,
Chinese Academy of Sciences, Beijing 100190, China; Fakult\"at f\"ur Mathematik, Universit\"at Bielefeld, D-33501 Bielefeld, Germany}
\email{zhuxiangchan@126.com}

\begin{abstract}
 In this paper we study the perturbation theory of  $\Phi^4_2$ model on the whole plane via stochastic quantization. We use integration by parts formula (i.e. Dyson-Schwinger equations) to generate
the perturbative  expansion for the $k$-point correlation functions, and prove bounds on the remainder of the truncated expansion using PDE estimates; this in particular
proves that the expansion is asymptotic. Furthermore, we derive short distance behaviors of the $2$-point function and the connected $4$-point function, also via suitable Dyson-Schwinger equations combined with PDE arguments.
\end{abstract}

\subjclass[2010]{60H15; 35R60}
\keywords{}

\date{\today}

\begin{nouppercase}
\maketitle
\end{nouppercase}

\setcounter{tocdepth}{1}
\tableofcontents

\section{Introduction}

Physicists' approach to quantum field theory often relies on perturbation theory, which is a formal series expansion
in terms of a certain coupling constant, see for instance the standard book \cite[Chapter~7]{zinn}.
The first goal of this paper is to discuss this  perturbative approach
and provide some estimates on the expansion
using SPDE methods. We will focus on the $\lambda\Phi^4$
model in two space dimensions, formally given by
\begin{align}\label{eq:in}\dif \nu\varpropto\exp(-\frac{\lambda}{4} \int_{\mR^2} \Wick{\Phi(x)^4}\dif x ) \dif\mu,\end{align} where $\dif\mu$ is the Gaussian free field with mean zero and variance $\frac12(m-\Delta)^{-1}$ for $m>0$.
In physics, interesting quantities
such as correlations
are calculated by formally Taylor expanding the exponential density in the coupling constant $\lambda$,
and then applying  Wick theorem for Gaussian measures (which yields Feynman diagrams).
Jaffe \cite{jaffe1965} proved that this formal expansion  is a divergent series for any {\it fixed} $\lambda>0$.

A toy model to illustrate this divergent perturbation theory
is a $\Phi^4$ model in ``zero dimension'':
consider the following function (``partition function'')
$$
\mathcal Z(\lambda) = \int_{\R} e^{-x^2} e^{-\lambda x^4}\dif x \;.
$$
One has $\mathcal Z(0)=\sqrt{\pi}$.  For $\lambda>0$,
one can formally calculate it by
the expansion $e^{-\lambda x^4} = \sum_{n=0}^\infty \frac{(-\lambda)^n}{n!} x^{4n}$, since there is exact formula for Gaussian moments, i.e.
\begin{equ}[e:toy]
\mathcal Z(\lambda) =  \sum_{n=0}^\infty  \int_{\R}
\frac{(-\lambda)^n}{n!} x^{4n} e^{-x^2}\dif x
= \sum_{n=0}^\infty
\frac{(-\lambda)^n}{n!}
\frac{(4n-1)!!\sqrt{\pi}}{2^{2n}}\;.
\end{equ}
This series is divergent for any {\it fixed} $\lambda>0$.
However, despite its divergence,
this  perturbative approach  is widely used
for other quantum field theories, such as quantum electrodynamics (QED),
gauge theory, or the so-called {\it ``standard model''} in general that comprises these models; and these perturbative calculations yield numerical values which agree with experiments at extremely high accuracy, see e.g. \cite[Eq.(4)]{QED}
for calculation of the electron's magnetic moment
 in QED  which agrees with experiment to eleven decimal places.

This means that practically
it is still often useful to calculate physical quantities
by truncating this series.
For instance in the above toy model, $\mathcal Z(0.01)\approx 1.7597$,
and the first two terms on the RHS of \eqref{e:toy} are approximately
  $1.7725-0.0133 = 1.7592$ which is close to the exact value.
 \footnote{Of course since the series is divergent,
for the fixed $\lambda=0.01$ the partial sum of \eqref{e:toy} will not be infinitely close to the exact value of $\mathcal Z(0.01)$; indeed
the $n=70$-th term on the RHS
is approximately equal to $0.9$ and a truncation up to such a high order certainly loses accuracy.}
We also refer to another interesting example along this line with illuminating discussion in \cite[Section~6.1]{folland}.
\footnote{The author explains that convergence and effectiveness of approximation are completely different. For example $\sum_{0}^\infty \frac{(-100)^n}{n!}$ converges to $e^{-100}\approx 10^{-44}$, but unfortunately its  first few partial sums are $1, -99, 4901,-161765\frac23$.}

We remark that the model \eqref{eq:in} we consider here is a type of bosonic models,
whereas the perturbation series for some of the ferminoic models actually converge, see \cite{GK85} (and more recently \cite[Sec.~B,D]{ABDG2020} in the setting of stochastic quantization).

The above discussion indicates that {\it asymptoticity} is a more relevant
notion than convergence in perturbation theory.
Asymptoticity means
that truncating the series after a {\it fixed} finite number of terms provides a good approximation to $\mathcal Z(\lambda)$ as $\lambda$ tends towards zero.
For the $\Phi^4_2$ model Dimock \cite{dimock1974} proved that the perturbative expansion of correlation functions (Schwinger functions) is asymptotic. His proof crucially relies on the constructive field theory results and methods by
Glimm, Jaffe, Spencer \cite{MR363256}.
The asymptoticity of $\Phi^4_3$
was also proved in \cite[Theorem 3]{FO76}, see also \cite[Sec.IV.2]{Magnen76}. We remark that a key input
 to the proof in
 \cite{dimock1974}  is a uniform exponential cluster property
 proven in  \cite{MR363256}, from which Dimock managed to deduce that correlation functions are smooth in $\lambda\in [0,\lambda_0]$ where $\lambda_0>0$ is sufficiently small,
 and asymptoticity essentially follows from Taylor remainder theorem. The smallness of $\lambda$ is mainly due to requirement for the cluster property to hold.
This is also the case for $d=3$. In fact  \cite{FO76} proved cluster property in 3d in detail, and only mentioned that smoothness in $\lambda$ and asymptoticity follow analogously as in 2d  \cite{dimock1974}. \cite[Sec.IV.2]{Magnen76} provided more details but the basic mechanism is also similar with  \cite{dimock1974}. The relation between smoothness of correlations in coupling constants and cluster properties was observed in statistical mechanics, see
\cite{MR323271}.  { Using the results of \cite{dimock1974}, 
Eckmann--Epstein--Fr\"ohlich \cite{MR418716} proved that in weakly coupled $P(\Phi)_2$ theories, perturbation theory in the coupling constant is asymptotic to the S-matrix elements; and the result was extended to $\Phi^4_3$ in \cite{MR519920} where it was shown that 
the time-ordered functions and S-matrix elements admit the standard perturbation series as asymptotic expansions.

Another approach to asymptotic perturbation theory for (one and two-component) $\Phi^4$ model in 2d and 3d was developed by Bovier and Felder in \cite{BF1984}, using an infinite family of skeleton inequalities. The leading orders of these skeleton inequalities were proved by Brydges--Fr\"ohlich--Sokal in \cite{MR648362,MR719815} and applied to construct $\Phi^4_{2,3}$ in \cite{BFS83}.
}

In this paper we give a new  proof of asymptoticity for $\Phi^4_2$ based on SPDE methods.
More precisely, we consider the stochastic quantization of $\Phi^4_2$ model (i.e. the dynamical $\Phi^4_2$ model) on $\mR^+\times \mR^2$:
\begin{align}\label{eq:st}(\p_t-\Delta+m)\Phi=-\frac12\lambda\Wick{\Phi^3}+\xi,\end{align}
with $\xi$ space-time white noise on $\mR^+\times \mR^2$  and  $\Wick{\Phi^3}$ is Wick product defined in Section \ref{sec:A2}. The solution theory of equation \eqref{eq:st}
 is now well developed: see \cite{MR1113223} where martingale solutions are constructed and \cite{DD03} where strong solutions are addressed, as well as the more recent approach to global well-posedness in \cite{MW17}.  It is obtained in \cite{RZZ17} that   the marginal distribution of  the stationary solution to \eqref{eq:st} is given by $\Phi^4_2$ field. Hence, we could use \eqref{eq:st} to study the $\Phi^4_2$ field.

 Another ingredient in our approach is
 the {\it integration by parts (IBP)} formula w.r.t. the $\Phi^4_2$ field, also called Dyson-Schwinger equations.
In our SPDE approach it seems not easy to establish cluster properties
or prove smoothness in $\lambda$ over the entire plane,
  so we do not follow the methodology in \cite{dimock1974}. Instead, we iteratively apply
IBP to generate the perturbation theory {\color{blue} (which is similar with \cite{BFS83} and \cite{BF1984})},
which after a certain order  yields a remainder represented
as integrals of the field $\Phi$ and Gaussian covariances.
Our strategy is then to introduce graphic
representation for such a remainder,
and then apply PDE a priori estimates
to control the graphs.
To this end we will carry out some manipulations on the graphs,
reduce graphs to trees
and design inductive arguments.

 As $\Phi^4_2$ field does not have full support on function space, we consider the following approximation:
 we start with Gibbs measures $(\nu_{M,\eps})_{M,\eps}$ on $\Lambda_{M,\eps}=(\eps(\mZ/ M\mZ))^2$ given by
 \begin{align*}
 \dif \nu_{M,\eps}\varpropto\exp\Big\{-\eps^2\sum_{\Lambda_{M,\eps}}\Big[\frac\lambda4\Phi^4+(-\frac32\lambda a_{M,\eps}+m)\Phi^2+|\nabla_\eps\Phi|^2\Big]\Big\}\prod_{x\in \Lambda_{M,\eps}}\dif \Phi(x),\end{align*}
 where $\nabla_\eps$ denotes the discrete gradient and $a_{M,\eps}$ are renormalization constants. We will use the stochastic quantization equation for $\nu_{M,\eps}$
  to prove that $\{\nu_{M,\eps}\}_{M,\eps}$ form a tight set in Section \ref{sec:A2} (see also \cite{GH18a} for  the proof of $3$d case).  We  still use $(M,\eps)$ to denote the subsequence such that $\nu_{M,\eps}\circ (\cE^{\eps})^{-1}$ converge to $\nu$ weakly with $\cE^{\eps}$ being the extension operator given in \eqref{def:E}. Here $\nu$ could be every accumulation point  and all the theorems hold for every $\nu$.

The $k$-point correlations for $\nu_{M,\eps}$
-- for which we will expand using
 integration by parts formula -- are
given by
 $$
 S_{\lambda,M,\eps}^k (x_1,\dots,x_k) =\int\Big[ \prod_{i=1}^k\Phi(x_i)\Big]\nu_{M,\eps}(\dif \Phi)\;.
 $$
Letting $M\to\infty, \eps\to0$ we obtain the $k$-point function for $\nu$
 \begin{align}\label{def:S1}
 \<S_\lambda^{\nu,k},\varphi\>\eqdef\lim_{\eps\to 0, M\to\infty}\int\int_{\mR^{2k}}\Big(\prod_{i=1}^k\Phi(x_i)\Big)\varphi(x_1,\dots,x_k)\prod_{i=1}^k\dif x_i\dif \nu_{M,\eps}(\Phi),
 \end{align}
 for $\varphi\in \mathcal{S}(\mR^{2k})$ with compact support Fourier transform, where the existence of the limit will be discussed  in Theorem \ref{lem:cm}. We also give the definition of $S_\lambda^{\nu,k}$ using extension operator in Section \ref{sec:3.1}, which coincides with the above definition \eqref{def:S1}.
 Our first main result gives the asymptotic expansion of $S_\lambda^{\nu,k}$.

\begin{theorem}\label{th:main}
It holds that for $\varphi\in \cS(\mR^{2k})$
\begin{equ}[e:main]
	\<S_\lambda^{\nu,k},\varphi\>  = \sum_{n=0}^N  \frac{\lambda^n}{n!} \<F_{n}^k, \varphi\> + \lambda^{N+1} \<R_{N+1}^{\nu,k},\varphi\>,
\end{equ}
with $F_n^k$ only depending on the Green function of Gaussian free field  and
$$
|\<R_{N+1}^{\nu,k},\varphi\>|\lesssim 1+\lambda^{{4k}+12(N+1)}
$$
with the proportional constant independent of $\lambda$ \footnote{Here the proportional constant may depend on $N$. We remark that the terms $\lambda^{{4k}+12(N+1)}$, $\lambda^{27}$ and $\lambda^{54}$ in these theorems provide controls
	for large $\lambda$, but these exponents may not be optimal (see Remark \ref{re}).}.
\end{theorem}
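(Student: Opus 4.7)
My plan is to derive the expansion for the lattice approximation $\nu_{M,\eps}$ via iterated integration by parts (IBP, i.e.\ Dyson--Schwinger equations) and then pass to the limit using the tightness and uniform moment bounds established in Section~\ref{sec:A2}. Since $\dif\nu_{M,\eps}\propto e^{-V_{M,\eps}(\Phi)}\dif\mu_{M,\eps}$ with $\mu_{M,\eps}$ the discrete Gaussian free field on $\Lambda_{M,\eps}$ with covariance $C_{M,\eps}$ and $V_{M,\eps}=\frac{\lambda}{4}\eps^{2}\sum_{x}\Wick{\Phi(x)^4}$, one has the basic identity
\begin{equation*}
\mE^{\nu_{M,\eps}}\!\bigl[\Phi(x)F(\Phi)\bigr]
= \eps^{2}\!\sum_{y}\!C_{M,\eps}(x,y)\,\mE^{\nu_{M,\eps}}\!\Bigl[\tfrac{\partial F}{\partial\Phi(y)}\Bigr]
-\lambda\,\eps^{2}\!\sum_{y}\!C_{M,\eps}(x,y)\,\mE^{\nu_{M,\eps}}\!\bigl[\Wick{\Phi(y)^3}F(\Phi)\bigr].
\end{equation*}

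Applying this inductively to $S_{\lambda,M,\eps}^{k}(x_1,\dots,x_k)=\mE^{\nu_{M,\eps}}[\prod_i\Phi(x_i)]$ generates a tree expansion. At each step a chosen $\Phi$-leg either (i) \emph{contracts} with another existing leg through a factor $C_{M,\eps}$, removing two legs without introducing any new $\lambda$, or (ii) \emph{opens a cubic vertex} at some new point $y$ via $-\lambda\!\int\! C_{M,\eps}(\cdot,y)\Wick{\Phi(y)^3}\dif y$, introducing one power of $\lambda$ and three fresh legs. I halt the recursion along every branch as soon as $N+1$ cubic vertices have accumulated. Those terminations that arrive by pure contraction with exactly $n\le N$ cubic vertices are ordinary free-theory Feynman diagrams built only from $C_{M,\eps}$; they provide $F_{n}^{k,M,\eps}\to F_{n}^{k}$ in the limit. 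Those terminations that arrive by reaching $N+1$ cubic vertices constitute $\lambda^{N+1}\<R_{N+1}^{\nu,k,M,\eps},\varphi\>$, a finite sum of objects of the schematic shape
\begin{equation*}
\lambda^{N+1}\!\int\!\Bigl(\prod_{\text{edges}}C_{M,\eps}\Bigr)\,\mE^{\nu_{M,\eps}}\!\Bigl[\prod_{j}\Wick{\Phi^{m_j}(y_j)}\Bigr]\prod_{j}\dif y_j,
\end{equation*}
with $m_j\in\{1,2,3\}$ and a total of at most $k+2(N+1)$ field insertions. This structure directly produces \eqref{e:main}, up to controlling the remainder.

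The core analytic task is to bound these remainder terms uniformly in $(M,\eps)$. My plan is a graph-reduction procedure: each diagram is turned into a tree by iteratively integrating out internal edges, exploiting that $C_{M,\eps}$ is a smoothing convolution whose on-diagonal logarithmic singularity combines harmlessly with Wick renormalisation, and that the product of several $C_{M,\eps}$ along a path can be estimated by Young/Besov multiplication inequalities. The resulting tree bounds are then closed by the uniform-in-$(M,\eps)$ moment estimates $\mE^{\nu_{M,\eps}}\bigl[\|\Wick{\Phi^m}\|_{\cC^{-\kappa}}^{p}\bigr]<\infty$ obtained via the Da~Prato--Debussche splitting $\Phi=Z^{\eps}+Y^{\eps}$ of the stationary dynamical $\Phi^{4}_{2}$ equation, where $Z^{\eps}$ is the stochastic convolution and $Y^{\eps}$ has strictly higher regularity, together with the coercivity of the quartic interaction. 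Tracking how these moment bounds depend on $\lambda$ --- for large $\lambda$ at most polynomial growth, driven by the $\sim k+2(N+1)$ field insertions, the Wick counterterms, and the strength of the interaction --- yields the explicit bound $1+\lambda^{4k+12(N+1)}$.

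The principal obstacle is the combined combinatorial and analytic bookkeeping: at every tree-reduction step, the short-distance singularities of products $C_{M,\eps}^{p}$ must be absorbed by Wick renormalisation so that every remaining integral is finite with constants uniform in $(M,\eps)$ and with the $\lambda$-dependence controlled solely by the explicit polynomial factor. Once the lattice-level bound on $R_{N+1}^{\nu,k,M,\eps}$ is in place, the passage to the limit is routine: convergence of discrete Green functions gives $F_{n}^{k,M,\eps}\to F_{n}^{k}$, and the uniform bound transfers to $R_{N+1}^{\nu,k}$ by the weak convergence $\nu_{M,\eps}\circ(\cE^{\eps})^{-1}\to\nu$ from Section~\ref{sec:A2} together with lower semicontinuity, completing \eqref{e:main}.
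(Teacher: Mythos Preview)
Your proposal follows essentially the same route as the paper: iterated IBP on the lattice yields the expansion with remainder $R^k_{N+1,M,\eps}$ written as a finite sum of Feynman-type integrals (this is Lemma~\ref{lem:FRk}); these are bounded uniformly in $M,\eps$ using the SPDE moment estimates of Corollary~\ref{co:zmm}; and one passes to the limit via Lemma~\ref{convergencek} and Theorem~\ref{th:Sk}.

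The one step where your outline is vague and the paper supplies a concrete device is the ``graph-reduction to trees''. The IBP-generated graphs are \emph{not} trees: in the paper's bookkeeping, the edges produced by your operation (i) (connecting two existing legs, the paper's ``green'' edges) create cycles, while only the edges to freshly opened cubic vertices (``red'' edges) form a forest of $k$ rooted trees (Lemma~\ref{lem:tree}). Your phrase ``integrating out internal edges'' does not obviously make sense for a graph with cycles, and a direct Young/Besov treatment of products $C_{M,\eps}^p$ multiplied against the distributional insertions $\Wick{\Phi^m}\in\bC^{-\gamma}$ would be delicate. The paper sidesteps this entirely by the identity $C_{M,\eps}(x,y)=\E\bigl[Z^{(i)}_{M,\eps}(x)Z^{(i)}_{M,\eps}(y)\bigr]$, using a fresh independent stationary Gaussian copy $Z^{(i)}$ for each green edge. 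This converts every cycle-making Green function into a product of two single-point random factors, so the integrand factorises over the $k$ red trees; the $Z^{(i)}$'s are absorbed into enlarged vertex random variables (the sets $D_1,D_2,D_3$) whose moments are controlled exactly as those of $\Wick{\Phi^m}$ (Corollary~\ref{co:2}). Each tree is then bounded by integrating from its leaves inward using only the Schauder estimate \eqref{eq:sch} and the product Lemma~\ref{lem:pow}. This is the content of Lemma~\ref{lem:estimate1} and is the main place your outline needs to be sharpened.
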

The complete version of Theorem \ref{th:main} is given in Theorem \ref{th:Sk}  and Proposition \ref{pro}.
Note that \cite{dimock1974} only amounts to $\lambda \<R_{N+1}^k,\varphi\> \to 0$ as $\lambda \downarrow 0$, 
{ and the result in \cite{BF1984} also assumes small $\lambda$ regime,}
whereas we give a more precise bound here which holds for arbitrary $\lambda>0$. Moreover, we obtain (see Theorem \ref{th:Sk}  and Proposition \ref{pro}) that \eqref{e:main} holds in $\bC^{-\gamma}(\rho^\ell)$ for $\gamma>0$, some $\ell>0$ and  polynomial weight $\rho$ and the same bound holds for $\|R_{N+1}^k\|_{\bC^{-\gamma}(\rho^\ell)}$. Here $\bC^{-\gamma}(\rho^\ell)$ is the weighted Besov-H\"older space (see Appendix \ref{App} for definition).

Note that the expansion \eqref{e:main} is precisely the Taylor
expansion in the coupling constant $\lambda$ in the aforementioned perturbation theory.
Namely, by taking the $n$-th derivative on the RHS of  \eqref{e:main} and evaluating at $\lambda=0$,
and invoking the above bound on $R_{N+1}^k$, we have
$$
\frac{\dif^n}{\dif\lambda^n}\Big|_{\lambda =0}  \<S_\lambda^{\nu,k} ,\varphi\>
=\<F_n^k,\varphi\>  \qquad \forall n\le N\;.
$$
In physics these derivatives on the LHS
result in expectations of Gaussian products and
are typically computed
by Wick theorems and Feynman diagrams.

\br
By \cite{jaffe1965} each $F_n^k$ has a lower bound depending on $n$ that is not summable, which implies the expansion is a divergent series.
\er

\vspace{2ex}

As the second goal of the paper,
our strategy of combining integration by parts formula and SPDE uniform estimates also allow us to find the leading  short distance behavior  for the $2$-point correlation and the connected $4$-point correlation. Let $C_{M,\eps}$(resp. $C_\eps$) be the Green function of $2(m-\Delta_{\eps})$ on $\Lambda_{M,\eps}$ (resp. on $\Lambda_\eps=\eps \mZ^2$). We also use $C$ to denote the Green function of $2(m-\Delta)$ on $\mR^2$.

\bt\label{thm:2point} Let $\rho(x)=(1+|h x|^2)^{-\delta/2}$  be a polynomial weight with  $h>0$ small enough and $\delta>1/2$.
It holds that for $\gamma>0$ and some $\ell>0$
\begin{equ}[e:2point]
\|S_{\lambda,M,\eps}^2-C_{M,\eps}\|_{\bC^{2-\gamma,\eps}(\Lambda_{\eps},\rho^\ell)}\lesssim \lambda^2+\lambda^{27},
\end{equ}
with the proportional constant independent of $M,\eps$. Furthermore taking tight limit, it holds that
$$\|S_{\lambda}^{\nu,2}-C\|_{\bC^{2-\gamma}(\mR^2,\rho^\ell)}\lesssim \lambda^2+\lambda^{27}.$$
\et

 Here $\bC^{2-\gamma,\eps}(\rho^\ell)$ is the discrete Besov H\"older space and we refer to Appendix \ref{App} for its definition.

\br  By Theorem \ref{thm:2point} we can write $S_\lambda^{\nu,2}$ as a function on $\mR^2$ and the leading short distance behavior of $S_\lambda^{\nu,2}$ is identical to that of $C$ up to a $\bC^{2-\gamma}$ part (although each of them are singular at origin).

\er

We define the following connected $4$-point function
\begin{align*}
U^4_{M,\eps}(x_1,x_2,x_3,x_4)
&\eqdef S^4_{\lambda,M,\eps}(x_1,x_2,x_3,x_4)-S_{\lambda,M,\eps}^2(x_1,x_2) S_{\lambda,M,\eps}^2(x_3,x_4)
\\& \quad -S_{\lambda,M,\eps}^2(x_1,x_3)S_{\lambda,M,\eps}^2(x_2,x_4)-S_{\lambda,M,\eps}^2(x_1,x_4)S_{\lambda,M,\eps}^2(x_2,x_3).
\end{align*}

\bt\label{th:main3}
Let $\rho(x)=(1+|h x|^2)^{-\delta/2}$  be a polynomial weight with  $h>0$ small enough and $\delta>0$. It holds that for $\gamma>0$ and some $\ell>0$
$$\|U^4_{M,\eps}\|_{\bC^{2-\gamma,\eps}(\Lambda_\eps^4,\rho^\ell)}\lesssim\lambda+\lambda^{54},$$
and
$$\Big\|U^{4}_{M,\eps}+6\lambda\int_{\Lambda_{M,\eps}}\Pi_{i=1}^4C_{M,\eps}(x_i-z)\dif z \Big\|_{\bC^{2-\gamma,\eps}(\Lambda_\eps^4,\rho^\ell)}\lesssim\lambda^2+\lambda^{54},$$
with the proportional constant independent of $M,\eps$. Moreover, as in \eqref{def:S1} the tight limit of $U^4_{M,\eps}$ exists which we denote by $U^{\nu,4}_\lambda$ and $$\|U^{\nu,4}_\lambda\|_{\bC^{2-\gamma}(\mR^8,\rho^\ell)}\lesssim\lambda+\lambda^{54},$$
and
$$\Big\|U^{\nu,4}_\lambda+6\lambda\int\Pi_{i=1}^4C(x_i-z)\dif z \Big\|_{\bC^{2-\gamma}(\mR^8,\rho^\ell)}\lesssim\lambda^2+\lambda^{54}.$$
\et

The proof of the above two results are given in Section \ref{sec:short}.
We will see in the proof of Theorem \ref{th:main3} that the expansion of $U^{\nu,4}_\lambda$ cancels some irregular part, and as a result,
the regularity  of  $U^{\nu,4}_\lambda$ is $\bC^{2-\gamma}$ which is better than the regularity of $S^{\nu,4}_\lambda$ which is $\bC^{-\gamma}$, for $\gamma>0$.
Here $\bC^{-\gamma}$  is the optimal regularity of $S^{\nu,4}_\lambda$ if considering in Besov--H\"older space since the Green's function $C$, which is not continuous, appears in the expansion of $S^{\nu,4}_\lambda$.

Similar results were also obtained in \cite[Theorem~1.1]{BFS83}, but in our results above
we obtain bounds under stronger regularity norms;
 moreover  \cite{BFS83} requires $\lambda>0$ to be sufficiently small whereas we do not.

The short distance behavior of $\Phi^4_3$ on the torus has also been studied by Brydges--Dimock--Hurd, see \cite[Theorem 9]{BDH95}, where the $L^p, p<\frac32$-norm of truncated correlations are controlled.
Here we consider the case of $\Phi^4_2$ on the full plane  and we are able to estimate the $\bC^{2-\gamma}$-H\"older norm of $U^{\nu,4}_\lambda$ and $S^{\nu,2}_\lambda-C$.
We refer to the remark at the end of \cite{BDH95}
for a discussion on regularity and
 how singular the truncated correlation functions can be at coinciding points.

\vspace{2ex}

{ We remark that as far as we understand,
our technique in proving the above theorems 
appears to be more robust than skeleton inequalities used in earlier literature (e.g. \cite{BF1984} and \cite{BFS83}), in the sense that
at least in 2d, it is valid for all values of the coupling
	constant $\lambda$ and can be very easily adapted to $N$ component models for any $N>1$ (PDE estimates for such models were discussed in \cite[Section 2 and Section 6.2]{SSZZ2d}).}

Finally we  mention  that it would be interesting
 to see whether  our methodology could be used to  deduce  similar results in 3d.
For 3d stochastic quantization, namely
the dynamical $\Phi^4_3$ model,
we refer to
the  construction of  local solutions \cite{Hairer14,GIP15,MR3846835}
and global solutions \cite{MW18,GH18,AK17, GH18a}.
These works, followed by   \cite{MoinatCPAM,Steele,SSZZ2d,SZZ3d,AK21},
including the present paper,
show that the usage of SPDE methods
is not only successful in constructing Euclidean QFTs
but also proving their axioms and properties.
As remarked in \cite{GH18a}
deducing properties about correlations from the dynamics would be a very interesting and challenging problem,
and our results in the present paper could be viewed as a step toward this direction. It would be also interesting to study connected correlations of higher orders.
We leave these generalizations to further work.

\subsection*{Acknowledgements}
H.S. gratefully acknowledges support by NSF grants DMS-1954091 and CAREER DMS-2044415. {\color{blue} R.Z. and X.Z. are grateful to the financial supports by National Key R\&D Program of China (No. 2022YFA1006300). R.Z. gratefully acknowledges financial support from the NSFC (No.  12271030). X.Z. is grateful to the financial supports in part by National Key R\&D Program of China
(No. 2020YFA0712700) and the NSFC (No. 12090014, 12288201) and the support by key Lab of Random Complex
Structures and Data Science, Youth Innovation Promotion Association (2020003), Chinese Academy of Science.
R.Z. and X.Z. are grateful to
the financial supports of 
 the financial support by the DFG through the CRC 1283 ``Taming uncertainty.}
 We also thank an anonymous referee who brought an important reference \cite{BF1984} to our attention. {\color{blue} Rongchan Zhu is the corresponding author.}

\section{Uniform estimates via SPDEs}\label{sec:A2}

In this section we study $\Phi^4_2$ field via stochastic quantization. We will obtain tightness of the discrete $\Phi^4_2$ field, which gives a construction of $\Phi^4_2$ field in the continuous setting. We also give some uniform estimates via SPDEs, especially how these estimates depend on the parameter $\lambda$.

We consider  $\Phi^4_2$ model in both discrete and continuous settings. In particular, we set $\Lambda_\eps:=\eps \mZ^2$ for $\eps=2^{-N}, N\in \mN\cup\{0\}$ and the periodic lattice $\Lambda_{M,\eps}:=\eps \mZ^2\cap \mT^2_{M}=\eps \mZ^2\cap [-\frac M2,\frac M2)^2$. We use $\cS(\mR^d)$ to denote the class of Schwartz functions on $\mR^d$ and $\cS'(\mR^d)$ denote the dual space of $\cS(\mR^d)$ called Schwartz generalized function space. Given a Banach space $E$ with a norm $\|\cdot\|_E$ and $T>0$, we write $C_TE=C([0,T];E)$ for the space of continuous functions from $[0,T]$ to $E$, equipped with the supremum norm $\|f\|_{C_TE}=\sup_{t\in[0,T]}\|f(t)\|_{E}$ and we write $L_T^pE=L^p([0,T];E)$ for the space of $L^p$ functions from $[0,T]$ to $E$, equipped with the $L^p$ norm $\|f\|_{L^p_TE}=(\int_0^T\|f(t)\|_{E}^p\dif t)^{1/p}$. We also use $o(\eps)$ to denote $a(\eps)\in \mR$ satisfying $\lim_{\eps\to0}\frac{a(\eps)}{\eps}=0$.
 In this paper we use the polynomial weight on $\mR^d$ given by $$\rho(x)=(1+|hx|^2)^{-\delta/2}, \quad \delta,h\geq0.$$  It is easy to see that
 \begin{align}\label{bd:rho}
 |\nabla \rho(x)|\lesssim |h|\rho(x).
 \end{align}
  We use $B^{\alpha,\eps}_{p,q}(\Lambda_\eps^k,\rho), \alpha\in \mR, p,q\in [1,\infty]$ to denote the weighted Besov spaces  on $\Lambda_{\eps}^k, k\in \mN,$ and we recall their definition  in Appendix \ref{App}. We also write $B^{\alpha,\eps}_{p,q}(\rho)=B^{\alpha,\eps}_{p,q}(\Lambda_\eps^k,\rho)$ if there's no confusion. Note that if $\eps=0$, $B^{\alpha,\eps}_{p,q}(\rho)$ is the classical weighted Besov space $B^{\alpha}_{p,q}(\rho)$ on $\mR^{2k}$.  Set $H^{\alpha,\eps}(\rho)=B^{\alpha,\eps}_{2,2}(\rho)$, $\bC^{\alpha,\eps}(\rho)=B^{\alpha,\eps}_{\infty,\infty}(\rho)$ and $H^{\alpha}(\rho)=B^{\alpha}_{2,2}(\rho)$, $\bC^{\alpha}(\rho)=B^{\alpha}_{\infty,\infty}(\rho)$.  We also use $\bC^{\alpha,\eps}_s$ to denote Besov-H\"older space w.r.t. each component on $\Lambda_{\eps}^k$ for $k\in \mathbb{N}$ with definition given in Appendix \ref{App}.
 The duality on $\Lambda_\eps$ is given by
$$\<f,g\>_\eps\eqdef \eps^2\sum_{x\in \Lambda_\eps}f(x)g(x),$$
with $\<\cdot,\cdot\>$ as  the usual duality between $\cS(\mR^d)$ and $\cS'(\mR^d)$.
We also write $L^{p,\eps}$ for the $L^p$ space on $\Lambda_\eps$ endowed with the norm
$$\|f\|_{L^{p,\eps}}=\Big(\eps^2\sum_{x\in \Lambda_\eps}|f(x)|^p\Big)^{1/p},$$
with the usual modification for $p=\infty$. We refer to Appendix \ref{App} for discrete Besov embedding, duality, interpolation, which will be used below.

We start with Gibbs measures $(\nu_{M,\eps})_{M,\eps}$ on $\Lambda_{M,\eps}$ given by
\begin{align}\label{eq:Phiin}
\dif \nu_{M,\eps}\varpropto\exp\Big\{-\eps^2\sum_{\Lambda_{M,\eps}}\Big[\frac\lambda4\Phi^4+(-\frac32\lambda a_{M,\eps}+m)\Phi^2+|\nabla_\eps\Phi|^2\Big]\Big\}\prod_{x\in \Lambda_{M,\eps}}\dif \Phi(x),\end{align}
where $$\nabla_\eps f(x)=\Big(\frac{f(x+\eps e_i)-f(x)}{\eps}\Big)_{i=1,2} $$ denotes the discrete gradient and $a_{M,\eps}$ are renormalization constants defined below. Here $(e_i)_{i=1,2}$ is the canonical basis in $\mR^2$. We write
$$\Delta_\eps f(x)=\eps^{-2}(f(x+\eps e_i)+f(x-\eps e_i)-2f(x)), \quad  x\in \Lambda_\eps$$
as the discrete Laplacian on $\Lambda_{\eps}$ and
$\sL_\eps:=\p_t+m-\Delta_\eps$.
We also use
$\Phi_{M,\eps}$ to denote the stationary solution to the discrete stochastic quantization equation on $\Lambda_{M,\eps}$
\begin{align}\label{eq:Phi}
\sL_\eps\Phi_{M,\eps}+\frac\lambda2 \Phi_{M,\eps}^3-\frac32\lambda a_{M,\eps}\Phi_{M,\eps}=\xi_{M,\eps},\end{align}
where $\xi_{M,\eps}$ is a discrete approximation of a space-time white noise $\xi$ on $\mR^+\times \mR^2$  constructed as follows: Let $\xi_M$ denote the periodization of $\xi$ on $\mR^+\times \mT^2_{M}$ and define the  spatial discretization by
$$\xi_{M,\eps}(t,x):=\eps^{-2}\<\xi_M(t,\cdot),1_{|\cdot-x|_\infty\leq \eps/2}\>,
\qquad (t,x)\in \mR^+\times \Lambda_{M,\eps},$$
where $|x|_\infty=|x_1|\vee |x_2|$ for $x=(x_1,x_2)$.
The law of $\Phi_{M,\eps}$ at every time $t\geq0$ is given by $\nu_{M,\eps}$ defined in \eqref{eq:Phiin}, which is the unique invariant measure to the finite dimensional gradient system \eqref{eq:Phi} (see e.g. \cite{DPZ}). We extend  all the functions and distributions  to $\Lambda_{\eps}$ by periodic extension.

The aim of this section is to show that
$\{\nu_{M,\eps}\}_{M,\eps}$ (properly extended to $\cS'(\mR^2)$) is tight in $\bC^{-\gamma}(\rho)$  for $\gamma>0$ and some polynomial weight $\rho$.
Every accumulation point $\nu$ is an invariant measure of the following SPDE on $\mR^+\times\mR^2$:
\begin{equation}\label{eq:Phi1}
\sL\Phi+\frac\lambda2\Wick{\Phi^3}=\xi,
\end{equation}
with $\sL=\p_t-\Delta+m$.
Here $\Wick{\Phi^3}$ is the Wick power defined by
$$\Wick{\Phi^3}=\lim_{\substack{\eps\to 0 \\ M\to\infty}}\cE^\eps (\Phi^3_{M,\eps}-3a_{M,\eps}\Phi_{M,\eps})$$
where  $\cE^\eps$ is an extension operator from $B^{\alpha,\eps}_{p,q}(\rho)$ to $B^\alpha_{p,q}(\rho)$ for some $\alpha<0$ defined in \eqref{def:E} in Appendix \ref{App}.

Here we understand \eqref{eq:Phi1}  decomposed as the following two equations: $\Phi=Z+Y$
\begin{equation}\label{eq:YZ}
\sL Z=\xi,\quad \sL Y=-\frac\lambda2 (Y^3+3Y^2Z+3Y\Wick{Z^2}+\Wick{Z^3}),
\end{equation}
where
$$\Wick{Z^2}\eqdef\lim_{\eps\to 0, M\to\infty}
\cE^\eps \Wick{Z^2_{M,\eps}}\eqdef\lim_{\eps\to 0, M\to\infty}\cE^\eps (Z^2_{M,\eps}-a_{M,\eps}),$$
$$\Wick{Z^3}\eqdef\lim_{\eps\to 0, M\to\infty}\cE^\eps \Wick{Z^3_{M,\eps}}\eqdef\lim_{\eps\to 0, M\to\infty}\cE^\eps (Z^3_{M,\eps}-3a_{M,\eps}Z_{M,\eps}).$$
Here $Z_{M,\eps}$ are stationary solutions to
\begin{align}\label{eq:ZM}\sL_\eps Z_{M,\eps}=\xi_{M,\eps},\end{align} $a_{M,\eps}=\E Z_{M,\eps}^2(0)$ and the limits are in $C([0,T];\bC^{-\gamma}(\rho^{\kappa}))$ $\mathbf{P}$-a.s. for any $\gamma, \kappa>0$ and $T>0$. By standard renormalization calculation we have that for every $p\geq1, \kappa, \gamma>0$ and $T>0$
\begin{align}\label{bd:Z}
\E\|{Z_{M,\eps}}\|_{C_T\bC^{-\gamma,\eps}(\rho^\kappa)}^p+\E\|\Wick{Z^2_{M,\eps}}\|_{C_T\bC^{-\gamma,\eps}(\rho^\kappa)}^p+\E\|\Wick{Z^3_{M,\eps}}\|_{C_T\bC^{-\gamma,\eps}(\rho^\kappa)}^p\lesssim1,
\end{align}
with the proportional constant independent of $\eps, M$,
and
\begin{align}\label{bd:Z1}
\E\|\cE^\eps{Z_{M,\eps}}-Z\|_{C_T\bC^{-\gamma}(\rho^\kappa)}^p+\E\|\cE^\eps\Wick{Z^2_{M,\eps}}-\Wick{Z^2}\|_{C_T\bC^{-\gamma}(\rho^\kappa)}^p+\E\|\cE^\eps\Wick{Z^3_{M,\eps}}-\Wick{Z^3}\|_{C_T\bC^{-\gamma,\eps}(\rho^\kappa)}^p\to0,
\end{align}
as $M\to \infty, \eps\to0$ (c.f. \cite{MW17, GH18a, ZZ18}).
Global well-posedness of equations \eqref{eq:YZ} and hence equation \eqref{eq:Phi1} have been obtained in \cite{MW17}. In the following we take $(\Phi_{{M,\eps}}, Z_{M,\eps})$ as joint stationary processes satisfying equations \eqref{eq:Phi} and \eqref{eq:ZM}, respectively, which could be constructed by \cite[Lemma 5.7]{SSZZ2d}.  Hence, $Y_{M,\eps}\eqdef \Phi_{{M,\eps}}-Z_{M,\eps}$ satisfy the following equation on $\Lambda_{M,\eps}$:
\begin{align}\label{eq:Y}
\sL_\eps Y_{M,\eps}=-\frac{\lambda}{2}(Y_{M,\eps}^3+3Y_{M,\eps}^2Z_{M,\eps}+3Y_{M,\eps}\Wick{Z_{M,\eps}^2}+\Wick{Z_{M,\eps}^3}).
\end{align}

In the following we  derive uniform estimates in both parameters $M, \eps$ for $Y_{M,\eps}$. To this end, we recall that all the distributions above are extended periodically to the full lattice $\Lambda_{\eps}$.

For general $p\geq2$ we  have the following $L^p$ uniform bounds.

\bl\label{lem:estp} For the polynomial weight $\rho(x)=(1+|h x|^2)^{-\delta/2}$ and $h>0$ small enough, it holds that
for every $p\geq 1, \delta>\frac12$ and $0<\sigma< 1$
\begin{align}\label{eq:zmm1}\E\|Y_{M,\eps}\rho^2\|_{L^{2,\eps}}^{2p}+\E\|Y_{M,\eps}\rho^2\|_{H^{1-\sigma,\eps}}^2\|Y_{M,\eps}\rho^2\|_{L^{2,\eps}}^{2(p-1)}\lesssim \lambda^{p}+\lambda^{3p}.\end{align}
Furthermore, for every $p_0\geq 2$ there is $h>0$ small enough such that
for the  weight $\rho(x)=(1+|hx|^2)^{-\delta/2}$ and every $2\leq p\leq p_0$ with $p$ even, $p\delta>2$
\begin{align}\label{eq:zmm1p}\E\|Y_\eps^p\rho^p\|_{L^{1,\eps}}+\E\||\nabla_\eps Y_\eps|^2|Y_\eps|^{p-2}\rho^p\|_{L^{1,\eps}}\lesssim \lambda+\lambda^{2+p/2}.\end{align}
Here all the proportional constants are independent of $M,\eps$.
\el

\begin{proof}
We omit the subscript $M$ for notation's simplicity, and all the proportional constants in the proof are independent of $\eps$ and $M$. We first note that for the canonical basis $\{e_i\}_{i=1}^2$ in $\mR^2$ and even $p$
	\begin{align}
	\nabla_\eps Y_\eps(x)\cdot \nabla_\eps Y_\eps^{p-1}(x)=&\sum_{i=1}^2(\nabla^i_\eps Y_\eps(x))^2\Big(\sum_{\ell=2}^pY_\eps^{p-\ell}(x+\eps e_i)Y_\eps^{\ell-2}(x)\Big)\no
	\\\geq& \frac12\Big(|\nabla_\eps Y_{\eps}(x)|^2|Y_{\eps}(x)|^{p-2} +\sum_{i=1}^2 (\nabla_\eps^i Y_\eps(x))^2 Y_\eps^{p-2}(x+\eps e_i)\Big) ,\label{eq:p}
	\end{align}
	where $\nabla_\eps^i Y_\eps(x)\eqdef\frac1{\eps} (Y_\eps(x+\eps e_i)-Y_\eps(x))$ and we used
	$Y_\eps^{p-\ell}(x+\eps e_i)Y_\eps^{\ell-2}(x)\geq0$ for $\ell$ even and $$Y_\eps^{p-\ell}(x+\eps e_i)Y_\eps^{\ell-2}(x)\leq \frac12Y_\eps^{p-\ell+1}(x+\eps e_i)Y_\eps^{\ell-3}(x)+\frac12Y_\eps^{p-\ell-1}(x+\eps e_i)Y_\eps^{\ell-1}(x)$$
	for $\ell$ odd
	in the last step.
	Taking the $L^{2,\eps}$ inner product with $Y_{\eps}^{p-1}\rho^p$ on the both sides of \eqref{eq:Y} and using  \eqref{eq:p} we obtain
	\begin{align}
	\frac1p &  \frac{\dif}{\dif t}\|Y_{\eps}^p\rho^p\|_{L^{1,\eps}}+\frac12(D_1+D_2)+m\|Y_{\eps}^p\rho^p\|_{L^{1,\eps}}^2+\frac\lambda2\|Y_{\eps}^{p+2}\rho^p\|_{L^{1,\eps}} 	\notag
	\\
	&\leq |\<[\nabla_\eps,\rho^p]Y_{\eps}^{p-1},\nabla_\eps Y_{\eps}\>_\eps|-\frac{3\lambda}2\<	\rho^pY_{\eps}^{p+1},Z_{\eps}\>_\eps -\frac{3\lambda}2 \<\rho^pY_{\eps}^p,\Wick{Z_{\eps}^2}\>_\eps- \frac\lambda2 \<\rho^pY_{\eps}^{p-1},\Wick{Z_{\eps}^3}\>_\eps\label{eq:bd3}
	\\&\eqdef\sum_{k=1}^4I_k,\no
	\end{align}
	where
	$$ D_1\eqdef \||\nabla_\eps Y_{\eps}|^2|Y_{\eps}|^{p-2} \rho^p\|_{L^{1,\eps}},\quad D_2\eqdef\Big\|\sum_{i=1}^2(\nabla_\eps^i Y_\eps)^2 \cdot Y_\eps^{p-2}(\cdot+\eps e_i)\rho^p\Big\|_{L^{1,\eps}}.$$
	We also set
	$B\eqdef\|Y_{\eps}^{p+2}\rho^p\|_{L^{1,\eps}}$.
	For $I_1$ we have
	\begin{align*}
	I_1\leq C_\rho \|\rho^pY_{\eps}^p\|_{L^{1,\eps}}^{1/2}D_1^{1/2} \leq C_{\rho,\gamma}\|\rho^pY_{\eps}^p\|_{L^{1,\eps}}+\gamma D_1,
	\end{align*}
	where the  constants $C_\rho, C_{\rho,\gamma}$ depend on $\|\rho^{-p}[\nabla_\eps,\rho^p]\|_{L^{\infty,\eps}}$, which for given $p_0$ by \eqref{bd:rho} could be chosen sufficiently small for $p\leq p_0$ by choosing $h$ small enough.
	By Lemma \ref{Le:du} we have for $0<s<\frac13$, $\kappa>0$ 
	\begin{align*}
	I_2\lesssim	\lambda \|Y^{p+1}_{\eps}\|_{B^{s,\eps}_{1,1}(\rho^{p-\kappa})}\|Z_{\eps}\|_{\bC^{-s,\eps}(\rho^\kappa)},
	\end{align*}
	and we use Lemma \ref{Le32}  to have
	\begin{align}\label{eq:bd1}
	\|Y^{p+1}_{\eps}\|_{B^{s,\eps}_{1,1}(\rho^{p-\kappa})}\lesssim \|Y^{p+1}_{\eps}\|_{B^{-s,\eps}_{1,1}(\rho^{p-2\kappa})}^{1/2}\|Y^{p+1}_{\eps}\|_{B^{3s,\eps}_{1,1}(\rho^{p})}^{1/2}.\end{align}
	For the first factor on the RHS of \eqref{eq:bd1} we use H\"older's inequality to have
	$$\|Y^{p+1}_{\eps}\rho^{p-2\kappa}\|_{B^{-s,\eps}_{1,1}}\lesssim\|Y^{p+1}_{\eps}\rho^{p-2\kappa}\|_{L^{1,\eps}}\lesssim\|Y^{p+2}_{\eps}\rho^{p}\|_{L^{1,\eps}}^{\frac{p+1}{p+2}}\|\rho^\theta\|_{L^{1,\eps}}^{\frac1{p+2}}\lesssim B^{\frac{p+1}{p+2}},$$
	where $\theta=(p-2\kappa)(p+2)-p(p+1)=(1-2\kappa)p-4\kappa>2/\delta$ for $\kappa>0$ small enough. For the second factor on the RHS of \eqref{eq:bd1} we use (iii) in Lemma \ref{lem:emb} to obtain
	\begin{align*}
	\|Y^{p+1}_{\eps}\|_{B^{3s,\eps}_{1,1}(\rho^{p})} & \lesssim \|Y^{p+1}_{\eps}\|_{L^{1,\eps}(\rho^{p})}+\|\nabla_\eps Y_\eps^{p+1} \|_{L^{1,\eps}(\rho^{p})}
	\\&\lesssim \|Y^{p+1}_{\eps}\|_{L^{1,\eps}(\rho^{p})}+(D_1^{1/2}+D_2^{1/2})B^{1/2},
	\end{align*}
	where we use \eqref{bd:rho} to have
	\begin{equs}[eq:z]
	\|\nabla_\eps & Y_\eps^{p+1} \|_{L^{1,\eps}(\rho^{p})}\lesssim \sum_{i=1}^2\||\nabla_\eps^i Y_\eps|(Y_\eps^{p}(\cdot+\eps e_i)+Y_\eps^{p}) \rho^p\|_{L^{1,\eps}}\\
	&\lesssim D_2^{1/2}\Big(\sum_{i=1}^2\| |Y_\eps|^{p+2}(\cdot+\eps e_i) \rho^p\|_{L^{1,\eps}}\Big)^{1/2}+D_1^{1/2}B^{1/2}\lesssim(D_1^{1/2}+D_2^{1/2})B^{1/2},
	\end{equs}
	in the last step. Substituting the above calculations into \eqref{eq:bd1} and using Young's inequality
	 we obtain for $\gamma>0$
	\begin{align*}
	I_2 &\lesssim	\lambda (B^{\frac{p+1}{p+2}}+(D_1^{\frac14}+D_2^{\frac14})B^{\frac14+\frac{p+1}{2(p+2)}})\|Z_{\eps}\|_{\bC^{-s,\eps}(\rho^\kappa)}
	\\ & \leq \gamma\lambda B+\lambda C_\gamma\|Z_{\eps}\|_{\bC^{-s,\eps}(\rho^\kappa)}^{p+2}
	+\gamma D_1+\gamma D_2+\lambda^{2+p/2} C_\gamma\|Z_{\eps}\|_{\bC^{-s,\eps}(\rho^\kappa)}^{2(p+2)}.
	\end{align*}
	Similarly, for $I_3$ we use Lemma \ref{Le:du}  to have
	\begin{align*}
	I_3\lesssim	\lambda \|Y^{p}_{\eps}\|_{B^{s,\eps}_{1,1}(\rho^{p-\kappa})}\|\Wick{Z_{\eps}^2}\|_{\bC^{-s,\eps}(\rho^\kappa)},
	\end{align*}
	and we apply Lemma \ref{Le32} to obtain for $0<s<\frac13$
	\begin{align}\label{eq:bd2}
	\|Y^{p}_{\eps}\|_{B^{s,\eps}_{1,1}(\rho^{p-\kappa})}\lesssim \|Y^{p}_{\eps}\|_{B^{-s,\eps}_{1,1}(\rho^{p-2\kappa})}^{1/2}\|Y^{p}_{\eps}\|_{B^{3s,\eps}_{1,1}(\rho^{p})}^{1/2}.\end{align}
	For the first factor on the RHS of \eqref{eq:bd2} we use H\"older's inequality to have
	$$\|Y^{p}_{\eps}\rho^{p-2\kappa}\|_{B^{-s,\eps}_{1,1}}\lesssim\|Y^{p}_{\eps}\rho^{p-2\kappa}\|_{L^{1,\eps}}\lesssim\|Y^{p+2}_{\eps}\rho^{p}\|_{L^{1,\eps}}^{\frac{p}{p+2}}\|\rho^{\theta_1}\|_{L^{1,\eps}}^{\frac2{p+2}}\lesssim B^{\frac{p}{p+2}} ,$$
	where $\theta_1=p(1-\kappa)-2\kappa>2/\delta$ for $\kappa>0$ small enough. For the second factor on the RHS of \eqref{eq:bd2} we use (iii) in Lemma \ref{lem:emb} to obtain
	\begin{align*}
	\|Y^{p}_{\eps}\|_{B^{3s,\eps}_{1,1}(\rho^{p})}&\lesssim \|Y^{p}_{\eps}\|_{L^{1,\eps}(\rho^{p})}+\|\nabla_\eps Y_\eps^p \|_{L^{1,\eps}(\rho^{p})}\no
	\\&\lesssim \|Y^{p}_{\eps}\|_{L^{1,\eps}(\rho^{p})}+(D_1^{1/2}+D_2^{1/2})\|Y^{p}_{\eps}\|_{L^{1,\eps}(\rho^{p})}^{1/2}
	\\&\lesssim B^{\frac{p}{p+2}}+(D_1^{1/2}+D_2^{1/2})B^{\frac{p}{2(p+2)}},
	\end{align*}
	where we use a similar calculation as \eqref{eq:z} in the second step.
	Combining the above calculations and Young's inequality we obtain
	\begin{align*}
	I_3& \lesssim\lambda (B^{\frac{p}{p+2}}+(D_1^{\frac14}+D_2^{\frac14})B^{\frac{3p}{4(p+2)}})\|\Wick{Z_{\eps}^2}\|_{\bC^{-s,\eps}(\rho^\kappa)}
	\\&\leq \gamma\lambda B+\lambda C_\gamma\|\Wick{Z_{\eps}^2}\|_{\bC^{-s,\eps}(\rho^\kappa)}^{\frac{p+2}2}
	+\gamma D_1+\gamma D_2+ C_\gamma\lambda^{\frac{p}6+\frac43}\|\Wick{Z_{\eps}^2}\|_{\bC^{-s,\eps}(\rho^\kappa)}^{\frac{2(p+2)}3}.
	\end{align*}
	Similarly,
	\begin{align*}
	I_4&\lesssim	\lambda (B^{\frac{p-1}{p+2}}+(D_1^{\frac14}+D_2^{\frac14})B^{\frac{3p-4}{4(p+2)}})\|\Wick{Z_{\eps}^3}\|_{\bC^{-s,\eps}(\rho^\kappa)}
	\\&\leq \gamma\lambda B+\lambda C_\gamma\|\Wick{Z_{\eps}^3}\|_{\bC^{-s,\eps}(\rho^\kappa)}^{\frac{p+2}3}
	+\gamma D_1+\gamma D_2+ C_\gamma\lambda^{\frac{p+12}{10}}\|\Wick{Z_{\eps}^3}\|_{\bC^{-s,\eps}(\rho^\kappa)}^{\frac{2(p+2)}5}.
	\end{align*}
	Combining the above estimates and choosing $\gamma$ small enough to absorb $\gamma D_1, \gamma D_2$ and $\gamma \lambda B$ to the LHS of \eqref{eq:bd3}  we obtain
	\begin{equation}\label{eq:Lpp}
	\aligned
	\frac1p \frac{\dif}{\dif t}\|Y_{\eps}^p\rho^p\|_{L^{1,\eps}}+\frac14 D_1 & +\frac14 D_2+\frac{m}2\|Y_{\eps}^p\rho^p\|_{L^{1,\eps}}^2+\frac\lambda4\|Y_{\eps}^{p+2}\rho^p\|_{L^{1,\eps}} 	
	\\
	&\leq (\lambda+\lambda^{2+p/2})C(\mathbb{Z_\eps}),
	\endaligned
	\end{equation}
	where $C(\mathbb{Z_\eps})$ denotes a polynomial in the terms of
		$$\|Z_{M,\eps}\|_{\bC^{-s,\eps}(\rho^\kappa)},\qquad \|\Wick{Z_{M,\eps}^2}\|_{\bC^{-s,\eps}(\rho^\kappa)}, \qquad \|\Wick{Z_{M,\eps}^3}\|_{\bC^{-s,\eps}(\rho^\kappa)}$$ and  $\E C(\mathbb{Z_\eps})\lesssim1$ with the proportional constant independent of $\eps, M$.
	Recall that $(\Phi_{{\eps}}, Z_{\eps})$ are stationary process, so are $Y_{\eps}$.
Integrating over $t$ and taking expectation we obtain
	\begin{equation}\label{eq:Lp}
	\aligned
	\frac1p \E\|Y_{\eps}^p(t)\rho^p\|_{L^{1,\eps}}+\frac14\int_0^t\E(D_1+D_2)\dif s
	&+\frac{m}2\int_0^t\E\|Y_{\eps}^p\rho^p\|_{L^{1,\eps}}^2\dif s+\frac\lambda4\int_0^t\E\|Y_{\eps}^{p+2}\rho^p\|_{L^{1,\eps}}\dif s 	
	\\
	&\leq C_{m,\rho}(\lambda+\lambda^{2+p/2})+\frac1p \E\|Y_{\eps}^p(0)\rho^p\|_{L^{1,\eps}},
	\endaligned
	\end{equation}
	with the constant $C_{m,\rho}\geq0$.
Using stationarity of $Y_\eps$  and dividing $t$ on the both side of \eqref{eq:Lp} we obtain
		\begin{equation*}
	\aligned&\frac14\E\||\nabla_\eps Y_\eps(0)|^2 |Y_\eps(0)|^{p-2}\rho^p\|_{L^{1,\eps}}
	+\frac{m}2\E\|Y_{\eps}^p(0)\rho^p\|_{L^{1,\eps}}^2
	\\&=
\frac1{4t}\int_0^t\E\||\nabla_\eps Y_\eps|^2 |Y_\eps|^{p-2}\rho^p\|_{L^{1,\eps}}\dif s
	+\frac{m}{2t}\int_0^t\E\|Y_{\eps}^p\rho^p\|_{L^{1,\eps}}^2\dif s
	\\
	&\leq C_{m,\rho}(\lambda+\lambda^{2+p/2})+\frac1{pt} \E\|Y_{\eps}^p(0)\rho^p\|_{L^{1,\eps}}.
	\endaligned
	\end{equation*}
	The last term can be absorbed by the LHS for $t$ large enough. Then \eqref{eq:zmm1p} follows.
Moreover, by Lemma \ref{lem:emb}  we have for $\sigma\in (0,1)$ and any polynomial weight $\rho$
	\begin{align}\label{emb}
	\|Y_\eps\|_{B^{1-\sigma,\eps}_{2,2}(\rho)}\lesssim\|Y_\eps\|_{B^{-\sigma,\eps}_{2,2}(\rho)}+\|\nabla_\eps Y_\eps\|_{B^{-\sigma,\eps}_{2,2}(\rho)}\lesssim\|Y_\eps\|_{L^{2,\eps}(\rho)}+\|\nabla_\eps Y_\eps\|_{L^{2,\eps}(\rho)}.
	\end{align}
Choosing $p=2$,  replacing $\rho$ by $\rho^2$ with $\delta>\frac12$ in \eqref{eq:Lpp} and using \eqref{emb} we obtain for some $c_0>0$
	\begin{align*}
	\frac12\frac{\dif}{\dif t}\|Y_\eps(t)\rho^2\|_{L^{2,\eps}}^2
	+c_0\| Y_\eps\rho^2\|_{H^{1-\sigma,\eps}}^2
	& +\frac{m}4\|Y_\eps\rho^2\|_{L^{2,\eps}}^2+\frac{\lambda}{4}\|Y_\eps\rho\|_{L^{4,\eps}}^4
	\\&\leq (\lambda+\lambda^3)C(\mZ_\eps).
	\end{align*}
		Furthermore, we consider $\frac{\dif}{\dif t}\|Y_\eps\rho^2\|_{L^{2,\eps}}^{2p}$  and have for every $p>1$
	\begin{align*}
	\frac1{2p}\frac{\dif}{\dif t}\|Y_\eps\rho^2\|_{L^{2,\eps}}^{2p}&+c_0\| Y_\eps\rho^2\|_{H^{1-\sigma,\eps}}^2\|Y_\eps\rho^2\|_{L^{2,\eps}}^{2(p-1)}+\frac{m}4\|Y_\eps\rho^2\|_{L^{2,\eps}}^{2p}+\frac{\lambda}{4}\|Y_\eps\rho\|_{L^{4,\eps}}^4\|Y_\eps\rho^2\|_{L^{2,\eps}}^{2(p-1)}
	\\&\lesssim (\lambda+\lambda^3) C(\mathbb{Z_\eps})\|Y_\eps\rho^2\|_{L^{2,\eps}}^{2(p-1)}\lesssim (\lambda^{p}+\lambda^{3p})C(\mathbb{Z_\eps})+\frac{m}8\|Y_\eps\rho^2\|_{L^{2,\eps}}^{2p},
	\end{align*}
	which by taking stationary initial condition and similar argument as above gives \eqref{eq:zmm1}.
\end{proof}

By Lemma \ref{lem:estp} we could give  uniform control for the nonlinear terms on the right hand side of \eqref{eq:Y}.
Our aim is to do asymptotic expansion of $k$-point function  up to $\lambda^{N+1}$ for $k, N\in \mN$. In the following we fix a polynomial weight and several parameters
\begin{align}\label{eq:rho}\rho(x)=(1+|h x|^2)^{-\delta/2}, \quad \delta>1/2, \quad p_0>(9k(N+1))\vee (96), \quad p_0 \textrm{ even},\end{align}
with  $h>0$ small enough such that the bounds \eqref{eq:zmm1} holds for every $p\geq 1$ and \eqref{eq:zmm1p}  holds for $1\leq p\leq p_0$ and even $p$. In the subsequent sections we also consider functions on $\Lambda_\eps^k$ and we also abuse the notation $\rho$ to denote the weight in $\mR^{2k}$. In this case we say $\rho$ in \eqref{eq:rho} means that $h>0$ satisfies the same condition as in \eqref{eq:rho} and it is easy to find that $\rho$ could be controlled by  the corresponding weight in $\mR^2$. 

\bc\label{corl:1} For the polynomial weight and $p_0$ in \eqref{eq:rho}, it holds  that
for  $1\leq p\leq p_0/3$, $0<\beta<\frac{1}{{4p}}$, $\kappa>0$
\begin{align}\label{bd:zmm2}
\E\|Y_{M,\eps}\|_{\bC^{\beta,\eps}(\rho^{4+\kappa})}^p\lesssim\lambda^p+\lambda^{4p},
\end{align}
where the proportional constant is independent of $\eps,M$.
\ec
\begin{proof} We consider the case for $p$ even and the general case follows by H\"older's inequality. We also omit the subscript $M$ for notation's simplicity and all the proportional constants in the proof are independent of $\eps$ and $M$. We first  prove that
	for any $0<s<\frac{1}{2p}$ that
	\begin{equs}[bd:zmm1]
		\E\| Y_{M,\eps}^3\|_{L^{p,\eps}(\rho^3)}^p+\E\| Y_{M,\eps}^2Z_{M,\eps}\|_{B^{-s,\eps}_{1+s,\infty}(\rho^{4+\kappa})}^p&+\E\| Y_{M,\eps}\Wick{Z_{M,\eps}^2}\|_{H^{-s,\eps}(\rho^{2+\kappa})}^p
		\\
		&\lesssim \lambda+\lambda^{3p}.
	\end{equs}
	By Lemma \ref{lem:estp} we have for $3p\leq p_0$
	\begin{align*}
	\E\| Y_{\eps}^3\|_{L^{p,\eps}(\rho^3)}^p\lesssim\E\| Y_{\eps}\|_{L^{3p,\eps}(\rho)}^{3p} \lesssim\lambda+\lambda^{2+\frac{3p}2}.
	\end{align*}
	For the second term in \eqref{bd:zmm1} we use Lemma \ref{lem:pow} to have for $s>\gamma>0$
		\begin{align*}
	\|Y_{\eps}^2Z_{\eps}\|_{B^{-s,\eps}_{1+s,\infty}(\rho^{4+\kappa})}
	& \lesssim \|Z_\eps\|_{\bC^{-\gamma,\eps}(\rho^\kappa)}\|Y^2_\eps\|_{B^{s}_{1+s,\infty}(\rho^4)}\lesssim \|Z_\eps\|_{\bC^{-\gamma,\eps}(\rho^\kappa)}\|Y^2_\eps\|_{B^{3s,\eps}_{1,1}(\rho^4)}
	\\
	&\lesssim \|Z_\eps\rho^\kappa\|_{\bC^{-\gamma,\eps}}\|Y_\eps\rho^2\|_{H^{4s,\eps}}\|Y_\eps\rho^2\|_{L^{2,\eps}}
	\lesssim \|Z_\eps\|_{\bC^{-\gamma,\eps}(\rho^\kappa)}\|Y_\eps\rho^2\|_{H^{1-\sigma,\eps}}^{2\theta_1}\|Y_\eps\rho^2\|_{L^{2,\eps}}^{2-2\theta_1}
	\end{align*}
	where we choose $0<\sigma<1$ small enough such that $\theta_1=\frac{2s}{1-\sigma}<1$ and we use Lemma \ref{lem:emb} in the second inequality and Lemma \ref{Le32} in the last inequality.
Thus by \eqref{eq:zmm1} in Lemma \ref{lem:estp} and \eqref{bd:Z}	we obtain for $\sigma$ small satisfying $2sp<1-\sigma$, $$\E\|Y_{\eps}^2Z_{\eps}\|_{B^{-s,\eps}_{1+s,\infty}(\rho^{4+\kappa})}^p\lesssim \E\|Z_\eps\|_{\bC^{-\gamma,\eps}(\rho^\kappa)}^p\|Y_\eps\rho^2\|_{H^{1-\sigma,\eps}}^{2p\theta_1}\|Y_\eps\rho^2\|_{L^{2,\eps}}^{p(2-2\theta_1)}\lesssim \lambda^{p}+\lambda^{3p},$$
where we use H\"older's inequality in the last step.

	\noindent	For the third term in \eqref{bd:zmm1} we use Lemma \ref{lem:pow}, Lemma \ref{Le32}, \eqref{eq:zmm1}, \eqref{bd:Z} and H\"older's inequality to  have for $sp<2(1-\sigma)$
	\begin{align*}
	\E\| Y_{\eps}\Wick{Z_{\eps}^2}\|_{H^{-s,\eps}(\rho^{2+\kappa})}^p
	&\lesssim\E\| Y_{\eps}\|_{H^{s,\eps}(\rho^{2})}^p\|\Wick{Z_{\eps}^2}\|_{\bC^{-\gamma,\eps}(\rho^{\kappa})}^p
	\\
	&\lesssim\E\| Y_{\eps}\|_{H^{1-\sigma, \eps}(\rho^{2})}^{p\theta_1/2}\| Y_{\eps}\|_{L^{2, \eps}(\rho^{2})}^{p(1-\theta_1/2)}\|\Wick{Z_{\eps}^2}\|_{\bC^{-\gamma}(\rho^{\kappa})}^p\lesssim\lambda^{\frac{p}2}+\lambda^{\frac{3p}2}.
	\end{align*}
	Now  \eqref{bd:zmm1} follows.
	
	In the following we prove \eqref{bd:zmm2} by Schauder estimate Lemma \ref{lem:Sch}.
	Set $$F_\eps\eqdef Y_\eps^3+3Y_\eps^2Z_\eps+3Y_\eps\Wick{Z^2_\eps}+\Wick{Z^3_\eps}.$$
  We choose $\alpha,s>0$ such that
	\begin{align}\label{eq:alpha}
	\frac2p<\alpha<\frac{1}{4p}+\frac2p<\frac{s(1-s)}{1+s}+\frac2p<2,\end{align}
	where the last inequality holds by $sp<\frac12$.
		By Schauder's estimate Lemma \ref{lem:Sch}
	\begin{align*}
	\|Y_\eps\|_{L^p_TB^{\alpha,\eps}_{p,p}(\rho)}\lesssim \|Y_\eps(0)\|_{B^{\alpha-\frac2p,\eps}_{p,p}(\rho)}+\lambda\|F_\eps\|_{L_T^pB^{\alpha-2,\eps}_{p,p}(\rho)},
	\end{align*}
	with the proportional constant independent of $T$.
	By Lemma \ref{lem:emb} and the choice of $\alpha$ in \eqref{eq:alpha} we have the following embedding
	$$ B^{-s,\eps}_{1+s,\infty}(\rho^{4+\kappa})\subset B^{\alpha-2,\eps}_{p,p}(\rho^{4+\kappa}), \quad  H^{-s,\eps}(\rho^{2+\kappa})\subset B^{\alpha-2,\eps}_{p,p}(\rho^{4+\kappa}), \quad
	 L^{p,\eps}(\rho^{3})\subset B^{\alpha-2,\eps}_{p,p}(\rho^{4+\kappa}).$$
	Thus,
	\begin{align*}
\E\|Y_\eps(0)\|_{B^{\alpha,\eps}_{p,p}(\rho^{4+\kappa})}^p
&=\frac1T\E\|Y_\eps\|_{L^p_TB^{\alpha,\eps}_{p,p}(\rho^{4+\kappa})}^p\lesssim \frac{1}T\E\|Y_\eps(0)\|_{B^{\alpha-\frac2p,\eps}_{p,p}(\rho^{4+\kappa})}^p+\frac{\lambda^p}T\E\|F_\eps\|_{L_T^pB^{\alpha-2,\eps}_{p,p}(\rho^{4+\kappa})}^p
\\
&\lesssim\frac1T\E\|Y_\eps(0)\|_{B^{\alpha-\frac2p,\eps}_{p,p}(\rho^{4+\kappa})}^p+{\lambda^p}\E\| Y_{M,\eps}^3\|_{L^{p,\eps}(\rho^3)}^p+{\lambda^p}\E\| Y_{M,\eps}^2Z_{M,\eps}\|_{B^{-s,\eps}_{1+s,\infty}(\rho^{4+\kappa})}^p
\\&\qquad\qquad\qquad +{\lambda^p}\E\| Y_{M,\eps}\Wick{Z_{M,\eps}^2}\|_{H^{-s,\eps}(\rho^{2+\kappa})}^p+{\lambda^p}\E\|\Wick{Z_{M,\eps}^3}\|_{\bC^{-s,\eps}(\rho^{\kappa})}^p\\
&\lesssim\frac1T\E\|Y_\eps(0)\|_{B^{\alpha-\frac2p,\eps}_{p,p}(\rho^{4+\kappa})}^p+\lambda^p+\lambda^{4p},
	\end{align*}
	where we use \eqref{bd:zmm1} in the last step.
	Choosing $T$ large enough and using Besov embedding Lemma \ref{lem:emb},
	we obtain \eqref{bd:zmm2}.
\end{proof}

Define
$$\Phi_{{M,\eps}}=Y_{M,\eps}+Z_{M,\eps},\quad \Wick{\Phi^2_{{M,\eps}}}\eqdef \Phi^2_{M,\eps}-a_{M,\eps} =Y_{M,\eps}^2+2Y_{M,\eps}Z_{M,\eps}+\Wick{Z_{M,\eps}^2},$$
\begin{align}\label{def:Wick}\Wick{\Phi_{{M,\eps}}^3}\eqdef\Phi_{{M,\eps}}^3-3a_{M,\eps}\Phi_{{M,\eps}} =Y_{M,\eps}^3+3Z_{M,\eps}Y_{M,\eps}^2+ 3\Wick{Z^2_{{M,\eps}}}Y_{M,\eps}+\Wick{Z_{M,\eps}^3}.\end{align}
Then we can use the right hand sides to control $\E\|\Wick{\Phi^i_{M,\eps}}\|_{\bC^{-\gamma,\eps}(\rho^{9})}^p$ for $i=1,2,3$. More precisely we have the following result which will be useful in the subsequent sections.

\bc\label{co:zmm} For the polynomial weight and $p_0$  as in \eqref{eq:rho}, it holds  for  $0<\gamma<\frac12, \kappa>0, 1\leq p< p_0/9$ that
$$\E\|\Phi_{{M,\eps}}\|_{\bC^{-\gamma,\eps}(\rho^{4+\kappa})}^p+\E\|\Wick{\Phi_{{M,\eps}}^2}\|_{\bC^{-\gamma,\eps}(\rho^{9})}^p+\E\|\Wick{\Phi_{{M,\eps}}^3}\|_{\bC^{-\gamma,\eps}(\rho^{13})}^p\lesssim1+\lambda^{12p},$$
with the proportional constant independent of $M,\eps$.
\ec
\begin{proof} It suffices to consider the case that $p$ even since the general case follows by H\"older's inequality.
	In the following we consider each term separately and we choose $\kappa>0$ small enough.
	By  \eqref{bd:zmm2} we have for $\gamma>0, 0<\beta<\frac1{4p}$, $3p< p_0$
\begin{align}\label{est:Phi1}
\E\|\Phi_{{M,\eps}}\|_{\bC^{-\gamma,\eps}(\rho^{4+\kappa})}^p
\lesssim\E\|Y_{M,\eps}\|_{\bC^{\beta,\eps}(\rho^{4+\kappa})}^p+\E\|Z_{{M,\eps}}\|_{\bC^{-\gamma,\eps}(\rho^\kappa)}^p
\lesssim
1+\lambda^{4p}.
\end{align}
We use   Lemma \ref{lem:pow},   \eqref{bd:zmm2} and \eqref{bd:Z} to have for $0<\beta<\frac{1}{8p}$, $6p< p_0$
\begin{align*}
\E\|\Wick{\Phi_{{M,\eps}}^2}\|_{\bC^{-\gamma,\eps}(\rho^{9})}^p
&\lesssim \E\|Y_{M,\eps}\|_{\bC^{\beta,\eps}(\rho^{4+\kappa})}^{2p}+\E\|Y_{M,\eps}\|_{\bC^{\beta,\eps}(\rho^{4+\kappa})}^{p}\|Z_{{M,\eps}}\|_{\bC^{-\beta/2,\eps}(\rho^\kappa)}^p+\E\|Z_{{M,\eps}}\|_{\bC^{-\gamma,\eps}(\rho^\kappa)}^p
\\&\lesssim
1+\lambda^{8p},
\end{align*}
where we use H\"older's inequality in the last step.
Similarly for the last term we use Lemma \ref{lem:pow},   H\"older's inequality and \eqref{bd:zmm2} to have for $\gamma>0$ and $0<\beta<\frac{1}{12p}$, $9p< p_0$
\begin{align*}
\E\|\Wick{\Phi_{{M,\eps}}^3}\|_{\bC^{-\gamma,\eps}(\rho^{13})}^p
&\lesssim\E\|Y_{M,\eps}\|_{\bC^{\beta,\eps}(\rho^{4+\kappa})}^{3p}+\E\|Y_{M,\eps}\|_{\bC^{\beta,\eps}(\rho^{4+\kappa})}^{2p}\|Z_{{M,\eps}}\|_{\bC^{-\beta/2,\eps}(\rho^\kappa)}^p\\&+\E\|Y_{M,\eps}\|_{\bC^{\beta,\eps}(\rho^{4+\kappa})}^{p}\|\Wick{Z_{{M,\eps}}^2}\|_{\bC^{-\beta/2,\eps}(\rho^\kappa)}^p+\E\|\Wick{Z_{{M,\eps}}^3}\|_{\bC^{-\gamma,\eps}(\rho^\kappa)}^p
\lesssim
1+\lambda^{12p}.
\end{align*}
The corollary then follows from the above bounds.
\end{proof}

\br\label{re} The powers for the weight and $\lambda$ in Corollary \ref{co:zmm} are not optimal. If we use $L^{p,\eps}$-norm in \eqref{eq:zmm1p} to bound $\bC^{-\gamma,\eps}$ we have lower powers with the cost that $\gamma$ depends on $p_0$.
\er

We consider the tight limit of
$$(\cE^\eps\Phi_{{M,\eps}}(t),\cE^\eps Z_{M,\eps}(t),\cE^\eps Y_{M,\eps}(t))_{M,\eps}$$ and we denote by $(\Phi, Z,Y)$ the canonical representative of the random variables under consideration (i.e. the canonical process on the canonical probability space with the limiting measure).

\bt\label{lem:cm} For the polynomial weight and $p_0$  as in \eqref{eq:rho}, it holds that
the family
$$(\cE^\eps\Phi_{{M,\eps}}(t),\cE^\eps Z_{M,\eps}(t), \cE^\eps Y_{M,\eps}(t))_{M,\eps}$$ is tight in $(\bC^{-\gamma}(\rho^{4+2\kappa}), \bC^{-\gamma}(\rho^{2\kappa}), \bC^{\gamma}(\rho^{4+\kappa}))$ for $0<\gamma,\kappa<\frac18$. Moreover, the first marginal $\nu$ of every tight limit $\mu$ is an invariant measure of \eqref{eq:Phi1} and satisfies for every  $p\geq1$, $\sigma,\gamma>0$ with $3p<p_0$
$$\E^\nu\|\Phi\|_{\bC^{-\gamma}(\rho^{4+\kappa})}^{p}\lesssim 1+\lambda^{4p},\quad \E^{\mu}\|Y\|^2_{H^{1-\sigma}(\rho^2)}\lesssim \lambda^{2},\quad \E^{\mu}\|Y\|^p_{L^p(\rho)}\lesssim \lambda+\lambda^{2+\frac{p}2}.$$
\et
\begin{proof}
	By Lemma \ref{euniform} and \eqref{eq:zmm1} we know that for $p\geq1$, $Y_{M,\eps}=\Phi_{{M,\eps}}-Z_{M,\eps}$ satisfies
	\begin{align*}
	\E\|\cE^\eps Y_{M,\eps}\|_{L^2(\rho^2)}^{2p}+\E\|\cE^\eps Y_{M,\eps}\|_{H^{1-\sigma}(\rho^2)}^2\|\cE^\eps Y_{M,\eps}\|_{L^2(\rho^2)}^{2(p-1)}\lesssim \lambda^{p}+\lambda^{3p}.\end{align*}
By Corollary \ref{corl:1} and \eqref{est:Phi1} we have for $\gamma>0$, $p\geq 2$, $0<\beta<\frac{1}{4p}$, $3p< p_0$
	\begin{align*}
\E\|\cE^\eps Y_{M,\eps}\|_{\bC^{\beta}(\rho^{4+\kappa})}^p+	\E\|\cE^\eps \Phi_{M,\eps}\|_{\bC^{-\gamma}(\rho^{4+\kappa})}^p
\lesssim 1+\lambda^{4p}.
\end{align*}
		Then the tightness of $(\cE^\eps\Phi_{{M,\eps}}(t),\cE^\eps Z_{M,\eps}(t), \cE^\eps Y_{{M,\eps}}(t))_{M,\eps}$ in $(\bC^{-\gamma-\kappa}(\rho^{4+2\kappa}),\bC^{-\gamma}(\rho^{\kappa}),\bC^{\beta-\kappa}(\rho^{4+2\kappa}))$  follows from compact embedding Lemma \ref{lem:emb} and the moment bound follows from lower semicontinuity of the norm and \eqref{est:Phi1}, \eqref{eq:zmm1p}  and \eqref{eq:zmm1} with $p=1$.
We may  extract a converging subsequence, which is still denoted as $(\cE^\eps\Phi_{{M,\eps}}(t),\cE^\eps Z_{M,\eps}(t), \cE^\eps Y_{{M,\eps}}(t))_{M,\eps}$, modify the stochastic
basis, and find random variables $(\Phi,Z,Y)\in (\bC^{-\gamma}(\rho^{4+\kappa}),\bC^{-\gamma}(\rho^{\kappa}),\bC^{\beta}(\rho^{4+\kappa}))$ such that for $p\geq2$,   $0<\beta<\frac{1}{4p}$, $3p<p_0$
	\begin{align*}
	\E\|\cE^\eps \Phi_{{M,\eps}}- \Phi\|_{\bC^{-\gamma-\kappa}(\rho^{4+2\kappa})}^p\to 0,\quad  \E\|\cE^\eps Z_{{M,\eps}}- Z\|_{\bC^{-\gamma}(\rho^{\kappa})}^p\to 0, \quad M\to\infty, \eps\to 0,
	\end{align*}
	and
	\begin{align*}
	\E\|\cE^\eps Y_{{M,\eps}}- Y\|_{\bC^{\beta-\kappa}(\rho^{4+2\kappa})}^p\to 0, \quad M\to\infty, \eps\to 0.
	\end{align*}
Hence, by Lemma \ref{lem:Epro}, Lemma \ref{lem:pow} and \eqref{bd:zmm2}, \eqref{bd:Z}, \eqref{bd:Z1} we have for $\gamma>0$, $\frac{1}{8p}>\beta>0$, $3p<p_0$
\begin{align*}
&\E\|\cE^\eps (Y_{M,\eps}Z_{M,\eps})- YZ\|_{\bC^{-\gamma}(\rho^{4+3\kappa})}^p
\\
&\lesssim \E\|\cE^\eps (Y_{M,\eps}Z_{M,\eps})- \cE^\eps Y_{M,\eps}\cE^\eps Z_{M,\eps}\|_{\bC^{-\gamma}(\rho^{4+3\kappa})}^p+\E\| \cE^\eps Y_{M,\eps}\cE^\eps Z_{M,\eps}-YZ\|_{\bC^{-\gamma}(\rho^{4+3\kappa})}^p
\\
&\lesssim
o(\eps)+\E\|\cE^\eps Y_{M,\eps}-Y\|_{\bC^{\beta}(\rho^{4+2\kappa})}^p\|\cE^\eps Z_{M,\eps}\|_{\bC^{-\beta/2}(\rho^{\kappa})}^p+\E\|Y\|_{\bC^{\beta}(\rho^{4+2\kappa})}^p\|\cE^\eps Z_{M,\eps}-Z\|_{\bC^{-\beta/2}(\rho^{\kappa})}^p
\\
& \to 0, \quad \textrm{as } M\to\infty, \eps\to0.
\end{align*}
Similarly we have that  as $M\to\infty, \eps\to0$, for $9p<p_0$, $\gamma>0$
\begin{align}\label{eq:PhiY1}
\E\|\cE^\eps (Y_{M,\eps}^2)- Y^2\|_{\bC^{-\gamma}(\rho^{8+4\kappa})}^p+\E\|\cE^\eps (Y_{M,\eps}^3)- Y^3\|_{\bC^{-\gamma}(\rho^{12+6\kappa})}^p\to 0,
\end{align}
\begin{equs}[eq:PhiY]
\E\|\cE^\eps (Y_{M,\eps}^2Z_{M,\eps})- Y^2Z\|_{\bC^{-\gamma}(\rho^{8+5\kappa})}^p
+\E\|\cE^\eps (Y_{M,\eps}\Wick{Z_{M,\eps}^2})- Y\Wick{Z^2}\|_{\bC^{-\gamma}(\rho^{4+3\kappa})}^p\to 0,
\end{equs}
which implies that
\begin{align}\label{eq:Phic}
\E\|\cE^\eps \Wick{\Phi_{{M,\eps}}^2}- \Wick{\Phi^2}\|_{\bC^{-\gamma}(\rho^{8+4\kappa})}^p+\E\|\cE^\eps \Wick{\Phi_{{M,\eps}}^3}- \Wick{\Phi^3}\|_{\bC^{-\gamma}(\rho^{12+6\kappa})}^p\to 0, \quad M\to\infty, \eps\to 0,
\end{align}
where
$$ \Wick{\Phi^2}\eqdef Y^2+2YZ+\Wick{Z^2},$$
$$\Wick{\Phi^3}\eqdef Y^3+3ZY^2+ 3\Wick{Z^2}Y+\Wick{Z^3}.$$
Since $Y_{M,\eps}$ satisfies equation \eqref{eq:Y}, by Corollary \ref{co:zmm} it is easy to obtain for $0\leq s\leq t\leq T$
$$\E\|Y_{M,\eps}(t)-Y_{M,\eps}(s)\|_{\bC^{-2-\gamma,\eps}(\rho^{13})}^p\lesssim (1+\lambda^{12p})(t-s)^p,$$
which combined with \eqref{bd:Z}, \eqref{bd:Z1} implies that
the tightness of the stochastic process $(\cE^\eps \Phi_{M,\eps},\cE^\eps Y_{M,\eps},$ $\cE^\eps Z_{M,\eps})$ in $C_T\bC^{-2-2\gamma}(\rho^{13+\kappa})\times C_T\bC^{-2-2\gamma}(\rho^{13+\kappa}) \times C_T\bC^{-\gamma}(\rho^\kappa)$.
Since $\Phi_{{M,\eps}}$ satisfies \eqref{eq:Phi}, we know $\cE^\eps \Phi_{{M,\eps}}$ satisfies the following equation
$$\cE^\eps \Phi_{{M,\eps}}(t)=\cE^\eps Y_{{M,\eps}}(0)+\int_0^t (\Delta_\eps \cE^\eps Y_{{M,\eps}}+\cE^\eps \Wick{\Phi_{{M,\eps}}^3})\dif s+\cE^\eps Z_{M,\eps}(t).$$
For smooth test functions $\varphi$ it is each to check $\Delta_\eps \varphi\to \Delta \varphi$ for $\eps\to0$.
By the convergence in \eqref{eq:Phic}, the nonlinear term $\cE^\eps \Wick{\Phi^3_{M,\eps}}$ also converges in $L^p(\Omega,L_T^p\bC^{-\gamma}(\rho^{12+6\kappa}))$, which implies that
 the limiting process $\Phi$ satisfies equation \eqref{eq:Phi1}.
	As $\Phi_{{M,\eps}}$ is stationary solution to \eqref{eq:Phi},  $\nu$ is an invariant measure to \eqref{eq:Phi1}.
\end{proof}

\br By similar argument as in \cite{GH18a} we could prove that every $\nu$ is translation invariant and reflection positive and every $\nu$ satisfies an integration by parts formula, which has the same form as in \cite[Chapter 12]{MR887102}. As $\Phi$ is not a usual function, we will not use the integration by parts formula for $\nu$ directly but use the associated one for $\nu_{M,\eps}$ instead in the subsequent sections.
\er

\section{Integration by parts and graphs}\label{sec:3}

The purpose of this section is to study the perturbative expansion
for the $k$-point correlation $S_{\lambda,M,\eps}^k$
and apply the estimates from the previous section
to obtain a bound on the remainder of this expansion.
To this end we actually consider an equivalent expansion using integration by parts (``IBP'' for short in the sequel), i.e. Dyson-Schwinger equations. We also use the same notation as in Section  \ref{sec:A2}.

Let   $C_{\eps}(x) := \frac12(m - \Delta_\eps )^{-1}(x)$ be the Green's function of discrete Laplacian $\Delta_{\eps}$ on $\Lambda_{\eps}$ and $C_{M,\eps}$ is the periodic Green's function. Choosing $a_{M,\eps}=C_{M,\eps}(0)$ as the Wick constant, we recall
$$
\Wick{\Phi_{M,\eps}^3}=\Phi_{M,\eps}^3-3a_{M,\eps}\Phi_{\eps},\quad
\Wick{\Phi_{M,\eps}^2}=\Phi_{M,\eps}^2-a_{M,\eps}.
$$
Recall the following IBP formula with respect to $\nu_{M,\eps}$ 
 (see e.g. \cite[Sec.~6]{GH18a} or \cite[Appendix C]{SSZZ2d}) for $F (\Phi_{M,\eps})=f(\Phi_{M,\eps}(z_1),\dots,\Phi_{{M,\eps}}(z_n)), n\in \mN, z_i\in\Lambda_{M,\eps}$ with smooth $f:\mR^n\to\mR$ having polynomial growth first order derivative
\rmb{\begin{equation}\label{eq:ibp}
\aligned
 \E \Big(\frac{\delta F (\Phi_{M,\eps})}{\delta \Phi_{M,\eps}(z)}\Big)
=&\int\Big(\frac{\delta F (\Phi)}{\delta \Phi(z)}\Big)\nu_{M,\eps}(\dif \Phi)=\int F(\Phi)\Big(\frac{\delta V_{M,\eps} (\Phi)}{\delta \Phi(z)}\Big)\nu_{M,\eps}(\dif \Phi)
\\
=&2 \E \Big( F(\Phi_{M,\eps})(m-\Delta_\eps)\Phi_{M,\eps}(z) \Big)
+    \lambda
\E \Big( F(\Phi_{M,\eps}) \Wick{\Phi_{M,\eps}(z)^3} \Big) ,
\endaligned
\end{equation}}
where $V_{M,\eps}(\Phi)=\sum_{\Lambda_{M,\eps}}\Big[\frac\lambda4\Phi^4+(-\frac32\lambda a_{M,\eps}+m)\Phi^2+|\nabla_\eps\Phi|^2\Big]$ and
$$\frac{\delta F (\Phi_{M,\eps})}{\delta \Phi_{M,\eps}(z)}=\lim_{\eta\to0}\frac1\eta(F(\Phi_{M,\eps}+\eta \frac{e_z}{\eps^2})-F(\Phi_{M,\eps})),$$
for $e_z:\Lambda_{M,\eps}\to [0,1]$, $e_z(z)=1$, $e_z(y)=0$ for $y\neq z$. We 
 write \eqref{eq:ibp} in terms of Green's function $C_{M,\eps}$:
\begin{equation}\label{eq:IBP-C}
\aligned
\int_{\Lambda_{M,\eps}} C_{M,\eps}(x-z ) \E \Big(\frac{\delta F (\Phi_{M,\eps})}{\delta \Phi_{M,\eps}(z)}\Big) \dif z
&= \E \Big( \Phi_{M,\eps}(x) F(\Phi_{M,\eps})\Big)
\\& +    \lambda\int_{\Lambda_{M,\eps}} C_{M,\eps}(x-z )
\E \Big( F(\Phi_{M,\eps}) \Wick{\Phi_{M,\eps}(z)^3} \Big) \dif z
\endaligned
\end{equation}
for any $x\in {\Lambda_{M,\eps}}$   and we use $\int_{\Lambda_{M,\eps}}f(z)\dif z$ to denote $\eps^2\sum_{z\in \Lambda_{M,\eps}}f(z)$.

\subsection{$k$-point correlation}\label{sec:3.1}

Consider the $k$-point correlation given by
$$
S_{\lambda,M,\eps}^k (x_1,\dots,x_k) =\E \Big[ \prod_{i=1}^k\Phi_{M,\eps}(x_i)\Big]\in \bC^{-\gamma,\eps}(\rho^\ell)\;,
$$
for $\gamma>0$, some $\ell>0$ and the polynomial weight $\rho$.

In the following, let $\Phi$  denote the stationary solution to \eqref{eq:Phi1} with marginal distribution given by $\nu$ obtained in Theorem \ref{lem:cm}.
We  still use $(M,\eps)$ to denote the subsequence such that $\nu_{M,\eps}\circ (\cE^{\eps})^{-1}$ converge to $\nu$ weakly with $\cE^{\eps}$ being the extension operator given in \eqref{def:E}.
Define
\begin{align}\label{def:S}
\<S_\lambda^{\nu,k},\varphi\>\eqdef\lim_{\eps\to 0, M\to\infty}\E\int_{\mR^{2k}}\Big(\prod_{i=1}^k\cE^\eps\Phi_{{M,\eps}}(x_i)\Big)\varphi(x_1,\dots,x_k)\prod_{i=1}^k\dif x_i=\lim_{\eps\to 0, M\to\infty}\<{\cE}_k^\eps S^k_{\lambda,M,\eps},\varphi\>,
\end{align}
for $\varphi\in \mathcal{S}(\mR^{2k})$, where we use Fubini Theorem in the second equality. Recall ${\cE}_k^\eps$ is introduced in \eqref{def:Ek} in Appendix to extend functions on $\Lambda_{\eps}^k$ and by \eqref{einner} this coincides with the one given in \eqref{def:S1}. By Theorem \ref{lem:cm} and Lemmas \ref{lem:A1}, \ref{euniform} we know
\begin{align}\label{co:S}
\lim_{M\to\infty,\eps\to0}{\cE}_k^\eps S^k_{\lambda,M,\eps}=S_\lambda^{\nu,k} \quad \textrm{ in }\quad \bC^{-\gamma}(\rho^\ell),\end{align}
for $\gamma>0$, some $n>0$ and the polynomial weight $\rho$.

In the following we expand $S_{\lambda,M,\eps}^k$
by applying \eqref{eq:IBP-C} with
 suitable choices of test functions $F$.
 We introduce the following short-hand notation
\begin{align}\label{def:I}
\cI_\eps f(x)\eqdef \int_{\Lambda_{M,\eps}} C_{M,\eps}(x-y)f(y)\dif y.\end{align}

Before the proof, let's start by an example on how the iteration of this formula  gives  the desired  expansion  for $S_{\lambda,M,\eps}(x_0-y_0) = S^k_{\lambda,M,\eps}(x_0-y_0)$ with $k=2$, $x_0, y_0\in \Lambda_{M,\eps}$.
We first describe the initial steps of this iteration:

(1)  Taking $F(\Phi_{M,\eps}) = \Phi_{M,\eps}(y_0)$ in \eqref{eq:IBP-C} with $x=x_0$, we have
\begin{equ}[eq:DS1]
	C_{M,\eps}(x_0-y_0) = S_{\lambda,M,\eps}(x_0-y_0)
	+    \lambda 
	\E (\Phi_{M,\eps}(y_0) \cI_\eps(\Wick{\Phi_{M,\eps}^3})(x_0)).
\end{equ}
This gives us the leading order expansion of the form
 $S_{\lambda,M,\eps} = C_{M,\eps}+ O(\lambda)$
where $ O(\lambda)$ refers to the last term and we will prove in Proposition \ref{pro} below that $\bC^{-\gamma,\eps}(\rho^\ell)$-norm of this term is bounded by $\lambda(1+\lambda^{16})$ for $\gamma>0$, some $\ell>0$ and the polynomial weight $\rho$ in \eqref{eq:rho}.


(2) We can further expand the last term of \eqref{eq:DS1}: taking $F(\Phi_{M,\eps}) = \lambda 
\cI_\eps(\Wick{\Phi_{M,\eps}^3})(x_0)$ in \eqref{eq:IBP-C}
\begin{equs}
	3\lambda \int_{\Lambda_{M,\eps}} \!
	C_{M,\eps}(y_0 -z) C_{M,\eps}(x_0-z) \E &(\Wick{\Phi_{M,\eps}(z)^2})\,\dif z
	 =  \lambda  \E (\Phi_{M,\eps}(y_0) \cI_\eps(\Wick{\Phi_{M,\eps}^3})(x_0))
	\\
	&\quad +\lambda^2  \E \Big(\cI_\eps( \Wick{\Phi_{M,\eps}^3})(x_0)\cI_\eps (\Wick{\Phi_{M,\eps}^3})(y_0)\Big).	\label{eq:DS2}
\end{equs}
Substituting this into \eqref{eq:DS1} we see that in order to get the expansion up to the next order,
we need to apply IBP again to the LHS in \eqref{eq:DS2}.
Choose $F(\Phi_{M,\eps}) = \Phi_{M,\eps}(x)$.
The LHS of \eqref{eq:IBP-C} only gives a Wick constant which can be absorbed into the RHS, so we get
\begin{align}\label{eq:Ephi2}
0=\E \Big( \Wick{\Phi_{M,\eps}^2(x)}\Big)
+ \lambda 
\E \Big(  \Phi_{M,\eps}(x) \cI_\eps(\Wick{ \Phi_{M,\eps}^3})(x) \Big) ,
\end{align}
which implies that
\begin{equs}
	&3\lambda \int_{\Lambda_{M,\eps}}  C_{M,\eps}(y_0-z)C_{M,\eps}(x_0-z) \E (\Wick{\Phi_{M,\eps}(z)^2})\,\dif z
	\\=& -3\lambda^2 \int_{\Lambda_{M,\eps}}  C_{M,\eps}(y_0-z)C_{M,\eps}(x_0-z)  \E (\Phi_{M,\eps}(z) \cI_\eps(\Wick{\Phi_{M,\eps}^3})(z))\,\dif z.
\end{equs}
Therefore we have
\begin{equs}
	S_{\lambda,M,\eps}(x_0 & -y_0) = C_{{M,\eps}}(x_0-y_0)
	+\lambda^2  \E \Big( \cI_\eps(\Wick{\Phi_{{M,\eps}}^3})(x_0)\cI_\eps( \Wick{\Phi_{{M,\eps}}^3})(y_0)\Big)\,
	\\
	&+3\lambda^2 \int_{\Lambda_{M,\eps}}  \!\!\!\! C_{M,\eps}(y_0-z)C_{M,\eps}(x_0-z)  \E \Big(\Phi_{M,\eps}(z) \cI_\eps(\Wick{\Phi_{M,\eps}^3})(z)\Big)\,\dif z.
	\label{e:S2}
\end{equs}
This gives us the next order expansion  $S_{\lambda,M,\eps} = C_{M,\eps}+ O(\lambda^2)$,
i.e. the order $\lambda$ term is zero (see Proposition \ref{pro} below for the control of the term containing $\Phi_{{M,\eps}}$).

\begin{remark}
We will iteratively apply IBP as above to obtain higher order expansions.
Remark that alternatively,
if we  replace all the $\Phi_{M,\eps}$ in \eqref{e:S2}  by the Gaussian field $Z_{M,\eps}$, using
$$\E (Z_{M,\eps}(z)\cI_\eps(\Wick{Z_{M,\eps}^3}(z)))=0,$$ and
pretending that the error of this replacement is order $O(\lambda^3)$,
then we  can write $S_{\lambda,M,\eps}(x_0-y_0)$ as
\begin{equs}
	{} & C_{M,\eps}(x_0-y_0)
	+\lambda^2 
	\E \Big( \cI_\eps(\Wick{Z_{M,\eps}^3})(x_0) \cI_\eps(\Wick{Z_{M,\eps}^3})(y_0)\Big)\, +O(\lambda^3)\\
	&=C_{M,\eps}(x_0-y_0) +6\lambda^2 \int_{\Lambda_{M,\eps}\times \Lambda_{M,\eps}} C_{M,\eps}(y_0-w) C_{M,\eps}(x_0-z) C_{M,\eps}(z-w)^3\,\dif z\dif w +O(\lambda^3).		\label{e:S3}
\end{equs}
This is the expansion at one more order.
\end{remark}

\begin{remark}
The procedure illustrated above for $2$-point correlation
also applies to $k$-point correlations for general $k$. For example,
taking $F(\Phi_{M,\eps})=\prod_{i=2}^k\Phi_{M,\eps}(x_i)$ we have
\begin{equs}[eq:Phik]
{}&\sum_{i=2}^kC_{M,\eps}(x_1-x_i)S_{\lambda,M,\eps}^{k-2}(x_2,\dots,x_{i-1},x_{i+1},\dots ,x_k)
\\& =S^k_{\lambda,M,\eps} (x_1,\dots, x_k)+\lambda 
\E\Big(\prod_{i=2}^k\Phi_{M,\eps}(x_i)\cI_\eps(\Wick{\Phi_{M,\eps}^3})(x_1)\Big)\dif z.
\end{equs}

%
\end{remark}

\underline{\it Graphic notation.}
To  iterate the above procedure in a more systematic way, it will be convenient to introduce some graphic notation. We denote  $C_{M,\eps}$ by a line,
and $\Phi_{M,\eps}$ by a tiny wavy line.
Then one can write the computation \eqref{eq:DS1}--\eqref{e:S3} above graphically as
\begin{equs}[e:exH]
S^2_{\lambda,M,\eps}
&= \qquad
\begin{tikzpicture}[baseline=-5]
	\node[dot] (x0) at (0,0) {};
	\node[dot] (y0) at (1,0) {};
	\draw[Phi] (x0) -- ++(0.1,-0.25);
	\draw[Phi] (y0) -- ++(-0.1,-0.25);
\end{tikzpicture}
\\
&=  \qquad
\begin{tikzpicture}[baseline=0]
	\node[dot] (x0) at (0,0) {};
	\node[dot] (y0) at (1.5,0) {};
	\draw[C] (x0) to (y0);
\end{tikzpicture}
\qquad -\lambda  \quad
	\begin{tikzpicture}[baseline=-15]
	\node[dot] (x0) at (0,0) {};
	\node[dot] (z) at (0.7,-0.5) {};
	\node[dot] (y0) at (1.7,0) {};
	\draw[C,bend right=30] (x0) to (z);
	\draw[Phi] (z) -- ++(0.18,-0.18);\draw[Phi] (z) -- ++(-0.18,-0.18);\draw[Phi] (z) -- ++(0,-0.25);
	\draw[Phi] (y0) -- ++(-0.1,-0.25);
	\end{tikzpicture}
\\
&=\qquad
\begin{tikzpicture}[baseline=0]
	\node[dot] (x0) at (0,0) {};
	\node[dot] (y0) at (1.5,0) {};
	\draw[C] (x0) to (y0);
\end{tikzpicture}
\qquad -3\lambda \quad
	\begin{tikzpicture}[baseline=-20]
	\node[dot] (x0) at (0,0) {};
	\node[dot] (z) at (1,-0.8) {};
	\node[dot] (y0) at (2,0) {};
	\draw[C,bend right=30] (x0) to (z);
	\draw[C,bend left=30] (y0) to (z);
	\draw[Phi] (z) -- ++(0.18,-0.18);\draw[Phi] (z) -- ++(-0.18,-0.18);
	\end{tikzpicture}
\qquad + \lambda^2\quad
	\begin{tikzpicture}[baseline=-20]
	\node[dot] (x0) at (0,0) {};
	\node[dot] (z) at (0.5,-0.8) {};
	\node[dot] (y0) at (2,0) {};
	\node[dot] (w) at (1.5,-0.8) {};
	\draw[C,bend right=30] (x0) to (z);
	\draw[C,bend left=30] (y0) to (w);
	\draw[Phi] (z) -- ++(0.18,-0.18);\draw[Phi] (z) -- ++(-0.18,-0.18);\draw[Phi] (z) -- ++(0,-0.25);
	\draw[Phi] (w) -- ++(0.18,-0.18);\draw[Phi] (w) -- ++(-0.18,-0.18);\draw[Phi] (w) -- ++(0,-0.25);
	\end{tikzpicture}
\\
&=\qquad
\begin{tikzpicture}[baseline=0]
	\node[dot] (x0) at (0,0) {};
	\node[dot] (y0) at (1.5,0) {};
	\draw[C] (x0) to (y0);
\end{tikzpicture}
\qquad + \lambda^2\quad
	\begin{tikzpicture}[baseline=-20]
	\node[dot] (x0) at (0,0) {};
	\node[dot] (z) at (0.5,-0.8) {};
	\node[dot] (y0) at (2,0) {};
	\node[dot] (w) at (1.5,-0.8) {};
	\draw[C,bend right=30] (x0) to (z);
	\draw[C,bend left=30] (y0) to (w);
	\draw[Phi] (z) -- ++(0.18,-0.18);\draw[Phi] (z) -- ++(-0.18,-0.18);\draw[Phi] (z) -- ++(0,-0.25);
	\draw[Phi] (w) -- ++(0.18,-0.18);\draw[Phi] (w) -- ++(-0.18,-0.18);\draw[Phi] (w) -- ++(0,-0.25);
	\end{tikzpicture}
	\qquad
	+3\lambda^2\quad
	\begin{tikzpicture}[baseline=-20]
	\node[dot] (x0) at (0,0) {};
	\node[dot] (z) at (1,0) {};
	\node[dot] (y0) at (2,0) {};
	\node[dot] (z1) at (1,-1) {};
	\draw[C,bend right=30] (x0) to (z);
	\draw[C,bend left=30] (y0) to (z);
	\draw[C,bend left=30] (z) to (z1);
	\draw[Phi] (z1) -- ++(0.18,-0.18);\draw[Phi] (z1) -- ++(-0.18,-0.18);\draw[Phi] (z1) -- ++(0,-0.25);
	\draw[Phi] (z) -- ++(-0.1,-0.25);
	\end{tikzpicture}
\\
&=\qquad
\begin{tikzpicture}[baseline=0]
	\node[dot] (x0) at (0,0) {};
	\node[dot] (y0) at (1.5,0) {};
	\draw[C] (x0) to (y0);
\end{tikzpicture}
\qquad +6 \lambda^2\quad
	\begin{tikzpicture}[baseline=-15]
	\node[dot] (x0) at (0,0) {};
	\node[dot] (z) at (0.5,-0.3) {};
	\node[dot] (y0) at (2,0) {};
	\node[dot] (w) at (1.5,-0.3) {};
	\draw[C,bend right=30] (x0) to (z);
	\draw[C,bend left=30] (y0) to (w);
	\draw[C,bend left=60] (z) to (w);
	\draw[C,bend right=60] (z) to (w);
	\draw[C] (z) to (w);
	\end{tikzpicture}
\qquad  +O(\lambda^3)
\end{equs}

\begin{remark}\label{rmk:IBP-graph}
A very helpful intuition is that
at each step of IBP,  we simply pick up a point which has a  wavy line (which corresponds to $x$ in \eqref{eq:IBP-C}),
and then connect it to the other existing points with a  wavy line (LHS of \eqref{eq:IBP-C}),
and also connect it to a new point with $3$ new wavy lines (namely
create a new factor $\cI_\eps(\Wick{\Phi_{M,\eps}^3})$ corresponding to the last term of \eqref{eq:IBP-C}).

For instance, \eqref{eq:Phik} amounts to picking up the point $x_1$, or graphically:
\begin{equ}[e:graph-4pt]
S^4_{\lambda,M,\eps}
= \;
\begin{tikzpicture}[baseline=-15]
	\node[dot] (x1) at (0,0) {};\node at (-0.3,0) {$x_1$};
	\node[dot] (x2) at (1,0) {};
	\node[dot] (x3) at (0,-1) {};
	\node[dot] (x4) at (1,-1) {};
	\draw[Phi] (x1) -- ++(0.1,-0.25);
	\draw[Phi] (x2) -- ++(-0.1,-0.25);
	\draw[Phi] (x3) -- ++(0.1,0.25);
	\draw[Phi] (x4) -- ++(-0.1,0.25);
\end{tikzpicture}
\quad =\;
\begin{tikzpicture}[baseline=-15]
	\node[dot] (x1) at (0,0) {};\node at (-0.3,0) {$x_1$};
	\node[dot] (x2) at (1,0) {};
	\node[dot] (x3) at (0,-1) {};
	\node[dot] (x4) at (1,-1) {};
	\draw[C] (x1) -- (x2);
	\draw[Phi] (x3) -- ++(0.1,0.25);
	\draw[Phi] (x4) -- ++(-0.1,0.25);
\end{tikzpicture}
\quad +\;
\begin{tikzpicture}[baseline=-15]
	\node[dot] (x1) at (0,0) {};\node at (-0.3,0) {$x_1$};
	\node[dot] (x2) at (1,0) {};
	\node[dot] (x3) at (0,-1) {};
	\node[dot] (x4) at (1,-1) {};
	\draw[C] (x1) -- (x3);
	\draw[Phi] (x2) -- ++(-0.1,-0.25);
	\draw[Phi] (x4) -- ++(-0.1,0.25);
\end{tikzpicture}
\quad +\;
\begin{tikzpicture}[baseline=-15]
	\node[dot] (x1) at (0,0) {};\node at (-0.3,0) {$x_1$};
	\node[dot] (x2) at (1,0) {};
	\node[dot] (x3) at (0,-1) {};
	\node[dot] (x4) at (1,-1) {};
	\draw[C] (x1) -- (x4);
	\draw[Phi] (x2) -- ++(-0.1,-0.25);
	\draw[Phi] (x3) -- ++(0.1,0.25);
\end{tikzpicture}
\quad -\lambda
\begin{tikzpicture}[baseline=-15]
	\node[dot] (x1) at (0,0) {};\node at (-0.3,0) {$x_1$};
	\node[dot] (x2) at (1,0) {};
	\node[dot] (x3) at (0,-1) {};
	\node[dot] (x4) at (1,-1) {};
	\node[dot] (z) at (.5,-.3) {};
	\draw[C] (x1) -- (z);
		\draw[Phi] (z) -- ++(0.18,-0.18);\draw[Phi] (z) -- ++(-0.18,-0.18);\draw[Phi] (z) -- ++(0,-0.25);
	\draw[Phi] (x2) -- ++(-0.1,-0.25);
	\draw[Phi] (x3) -- ++(0.1,0.25);
	\draw[Phi] (x4) -- ++(-0.1,0.25);
\end{tikzpicture}
\end{equ}

\end{remark}

We also find that for $\E[\Wick{\Phi_{M,\eps}^2(x)}]$, \eqref{eq:Ephi2} holds and for $\E[\Wick{\Phi_{M,\eps}^3(x)}]$ we use \eqref{eq:IBP-C} with $F(\Phi_{{M,\eps}})=\Wick{\Phi_{{M,\eps}}^2(x)}$ to have
$$0=\E[\Wick{\Phi_{M,\eps}^3(x)}]+\lambda \E[\Wick{\Phi^2_{M,\eps}(x)}\cI_\eps(\Wick{\Phi^3_{M,\eps}})(x)].$$
Then there is no need to connect $x$ to itself.

In general,
given a graph $G$, we write $G=(V_G,E_G)$ or simply $G=(V,E)$ where $V$ is the set of vertices
and $E$ is the set of edges.
We denote by $|V|$, $|E|$ the cardinalities of these sets,
namely the number of vertices and edges. Here for any two distinct vertices $u,v\in V$,
we allow  multiple edges between $u$ and $v$ (namely we allow `multigraphs' in the language of graph theory). However, we will assume
throughout the paper that our graphs do not have self-loops, i.e. there is not any edge of the form $\{u,u\}$ for $u\in V$.

\begin{definition}\label{def:nPhik}
	\rmb{For each $\ell\ge 0$,
	we define $\mathcal H_\ell^k$ to be the set of all the graphs $G=(V,E)$ such that
	$|V|=\ell+k$ which has  $k$ ``special points'' $\{u^*_m,m=1,\dots,k\}$ in $V$
	with $\deg(u^*_m)\in\{0,1\}$,
	and such that $\deg(v)\in \{1,2,3,4\}$ for every $v\in V \backslash \{u^*_m,m=1,\dots,k\}$.}
	We then define
	$
	\mathcal H := \cup_{\ell\ge 0} \mathcal H_\ell^k$.

	%
	We  then define $\mathcal G^k_\ell$ to be the set of all the graphs $G=(V,E)\in \mathcal H^k_\ell$ such that
	$\deg(v)=4$  for every $v\in V \backslash \{u^*_m,m=1,\dots,k\}$ and  $\deg(u_m^*)=1$ for $m=1,\dots,k$.
	We then write
	$ \mathcal G := \cup_{\ell\ge 0} \mathcal G^k_\ell$. Clearly,
	for $G\in \mathcal H_\ell^k$ if we write
	$$n_{\Phi}(G) := 4\ell+k - \sum_{v\in V} \deg(v),$$
	then
	$ \mathcal G = \{G\in  \mathcal H: n_{\Phi}(G)=0\} \subset \mathcal H$. 
	
	For any such graph we will write
	$V^\partial_G = \{u^*_m,m=1,\dots,k\}$ and
	$V^0_G= V_G \backslash V^\partial_G$. 
\end{definition}

Remark that
the graphs in \eqref{e:exH} are all elements of $\cH$,
where those without any tiny wavy line  are  elements of $\cG$.

We define a mapping from $\mathcal H_\ell^k$ to the set of all functions in $\{x_{u^*_m}\}_{m=1}^k$,
which maps
$G=(V,E)\in \mathcal H_\ell^k$ to
\begin{align}\label{def:IGk}
\mathbf{I}_G (x_1,\dots,x_{k})=\int& \Big(\!\!\!\prod_{\{u,v\}\in E_G}\!\!\!\! C_{M,\eps}(x_u,x_v)\Big) \,\no
\\&\E \Big(\prod_{u_m^*\in V^\partial_G}  \Phi_{M,\eps}(x_m)^{1-\deg(u_m^*)}
\prod_{z\in V^0_G} \Wick{\Phi_{M,\eps}(x_z)^{4-\deg(z)}}
\Big)
\prod_{z\in V^0_G} \dif x_z,
\end{align}
where $x_m=x_{u_m^*}$.
In particular when
$ G \in \mathcal G_\ell^k$,
$\mathbf{I}_G $ only depends on $C_{M,\eps}$, and not on $\Phi_{M,\eps}$ (which is then the usual convention for ``Feynman diagrams'' in physics).

We will sometimes say that $G$ is the graph associated with the function $\mathbf{I}_G$.
More generally when a function $F$ is a linear combination of
functions of $\mathbf{I}_G$ ($G\in \cH'$) for a finite collection of graphs $\cH' \subset \cH$
we say that  the graphs in $\cH'$ are the graphs  associated with the  function $F$. Whenever it is clear from the context we will sometimes use the same notation (such as $x$) to denote both a vertex of a graph and the point in $\R^2$ that is parametrized by $x$.

\begin{lemma}\label{lem:FRk}
For any $k \ge 1$ and $N \ge -1$,
	we have the following representation for the $k$-point correlation
	\begin{equ}[e:S-exp-DSk]
		S_{\lambda,M,\eps}^k  = \sum_{n=0}^N  \frac{\lambda^n}{n!} F_{n,M,\eps}^k + \lambda^{N+1} R_{N+1,M,\eps}^k
	\end{equ}
	where the graphs associated with $F_{n,M,\eps}^k$ belong to $\cG_n^k$,
	and   the graphs associated with $ R_{N+1,M,\eps}^k$ belong to $\cH_{N+1}^k$. 
	The functions
	$F_{n,M,\eps}^k $ and  $R_{N+1,M,\eps}^k$ are independent of $\lambda$.
\end{lemma}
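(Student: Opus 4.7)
The plan is to induct on $N$, iteratively applying the IBP identity \eqref{eq:IBP-C} together with the graphical dictionary of Remark \ref{rmk:IBP-graph}. The base case $N=-1$ I would handle by simply setting $R^k_{0,M,\eps}:=S^k_{\lambda,M,\eps}$: via \eqref{def:IGk} this is precisely $\mathbf{I}_G$ for the graph $G\in\cH^k_0$ with $k$ degree-$0$ boundary vertices and no edges (so $n_\Phi(G)=k$), and the asserted form holds with an empty sum.

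For the inductive step, assume the decomposition through order $N-1$, so that $R^k_{N,M,\eps}$ is a finite linear combination of $\mathbf{I}_G$'s with $G\in\cH^k_N$. I would pick, for each such $G$ with $n_\Phi(G)>0$, a vertex $u\in V_G$ carrying a wavy line and apply \eqref{eq:IBP-C} with $x=x_u$ and with $F$ equal to the remaining $\Phi$-dependent factor of $\mathbf{I}_G$. As spelled out in Remark \ref{rmk:IBP-graph} and illustrated in \eqref{e:exH}, the output splits into two disjoint classes: type (i), a sum of graphs $G'\in\cH^k_N$ obtained from $G$ by adjoining a single edge between $u$ and one of the other wavy-line vertices (produced by the functional derivative $\delta F/\delta\Phi(z)$ paired with the kernel $C_{M,\eps}(x_u-z)$), with $n_\Phi$ dropping by $2$; and type (ii), a single graph $G''\in\cH^k_{N+1}$ obtained by attaching a fresh interior vertex of degree $1$ with three new wavy lines to $u$, carrying a prefactor $-\lambda$. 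Type-(ii) outputs are absorbed into $\lambda R^k_{N+1,M,\eps}$ and not touched again, while type-(i) outputs are re-injected into the same IBP routine.

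Termination is immediate: type-(i) steps strictly decrease the non-negative integer $n_\Phi$ with $\ell=N$ fixed, while type-(ii) steps remove a graph from level $\ell=N$ once and for all. After finitely many iterations every surviving $\cH^k_N$ graph either has $n_\Phi=0$ (hence lies in $\cG^k_N$ and contributes to $\frac{1}{N!}F^k_{N,M,\eps}$) or has moved to $\cH^k_{N+1}$ (contributing to $\lambda R^k_{N+1,M,\eps}$). When $k$ is odd, the parity $n_\Phi\equiv k\pmod 2$ forces $\cG^k_N=\emptyset$ and hence $F^k_{N,M,\eps}\equiv 0$; the type-(i) chain then stalls at $n_\Phi=1$, where $\delta F/\delta\Phi\equiv 0$ and only the type-(ii) branch survives, consistently. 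The prefactor $\frac{1}{N!}$ is a normalisation that absorbs the combinatorial constant produced by the IBP iteration, chosen so the series matches the Taylor-expansion viewpoint recalled after Theorem \ref{th:main}. Since $F^k_{n,M,\eps}$ and $R^k_{N+1,M,\eps}$ are built out of $\mathbf{I}_G$'s with all $\lambda$-factors explicitly extracted into the prefactors, they enjoy the $\lambda$-independence asserted by the lemma.

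The main obstacle, really combinatorial rather than analytic, will be the graphical bookkeeping: I would need to verify carefully that, for each $G\in\cH^k_N$, applying $\delta/\delta\Phi(z)$ to the bare $\Phi$ at a degree-$0$ boundary vertex or to a Wick monomial $\Wick{\Phi^{4-\deg(z)}}$ at an interior vertex, and then pairing with $C_{M,\eps}(x_u-z)$, produces exactly the type-(i) graphs described above (with $n_\Phi$ shifted by $-2$ and $\ell$ unchanged), while the residual $\lambda$-term in \eqref{eq:IBP-C} yields the unique type-(ii) graph in $\cH^k_{N+1}$ (with $n_\Phi$ shifted by $+2$ and $\ell$ by $+1$). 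Once this dictionary is established case by case, no further analytic input is needed and the induction runs mechanically.
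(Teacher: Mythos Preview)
Your proposal is correct and follows essentially the same route as the paper: induction on $N$ with base case $N=-1$, then for each $G\in\cH^k_N$ in $R^k_N$ iteratively apply the IBP identity \eqref{eq:IBP-C} at a vertex with a free wavy line, producing graphs in $\cH^k_N$ with $n_\Phi$ lowered by $2$ (your type (i), the paper's $G^-_{xz}$) plus a single $\lambda$-weighted graph in $\cH^k_{N+1}$ (your type (ii), the paper's $G^+_{xz}$), and terminate once $n_\Phi=0$. The only cosmetic difference is that the paper dispatches odd $k$ in one line via the $\Phi\mapsto -\Phi$ symmetry, whereas you argue combinatorially that $n_\Phi$ preserves parity so $\cG^k_N=\emptyset$; and the paper simply \emph{defines} $F^k_N:=N!\sum_{G\in\cG^k_N} r'_G\mathbf{I}_G$ rather than deriving the $1/N!$ from any combinatorics, so you should not expect that factor to emerge naturally from the iteration.
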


We remark that in the lemma, when $N=-1$,
in which case the `empty' sum in  \eqref{e:S-exp-DSk}
is understood as $0$ by standard convention,
 \eqref{e:S-exp-DSk}  trivially holds
which states that  $S_{\lambda,M,\eps}^k = R_{0,M,\eps}^k$
where $R_{0,M,\eps}^k$ can be indeed associated with
a graph in $\cH_0^k$, that is, the graph with only $k$ vertices and no edge.
We also note that as an example the third line of
\eqref{e:exH} shows that  \eqref{e:S-exp-DSk}  holds for $k=2$ and $N=1$,
where $F_{0,M,\eps} = C_{M,\eps}$ and $F_{1,M,\eps} =0$.

\begin{proof}[Proof of Lemma \ref{lem:FRk}]
	In the proof we omit $M,\eps$ for notation's simplicity.	
By the $\Phi\to -\Phi$ symmetry,  $S_{\lambda,M,\eps}^k=0$ if $k$ is odd, so nothing needs to be proven.
	For $k\in 2\mathbb{N}$, we will prove   by induction in $N$ that the lemma holds with $R_{N+1}^k$ having   the following form
	\begin{equ}[e:RNk1]
		R_{N+1}^k = \sum_{\substack{G\in \mathcal H_{N+1}^k \\ n_\Phi(G) \in [0,m]\cap 2\mathbb{Z}}}
		\!\!\!\! r_G \mathbf{I}_G
	\end{equ}
	for  some $m \in 2\mathbb{Z}$ which may depend on $N$,
	and some coefficients $r_G\in \R$.

As remarked above, the lemma holds with $N=-1$, with $R_0^k = S^k_\lambda = \mathbf{I}_G$,
where $G=(V,E)\in \cH_0^k$ with $ V$ consisting of the $k$ special points only and $E$ being empty. 

Assume that for a fixed integer $N\ge 0$
we have already shown that
	\begin{equ}[e:RNk11]
		S_{\lambda}^k  = \sum_{n=0}^{N-1}  \frac{\lambda^n}{n!} F_{n}^k + \lambda^{N} R_{N}^k,
		\qquad
			R_{N}^k = \sum_{\substack{G\in \mathcal H_{N}^k \\ n_\Phi(G) \in [0,m]\cap 2\mathbb{Z}}}
		\!\!\!\! r_G \mathbf{I}_G
	\end{equ}
we then prove that the same holds with $N$ replaced by $N+1$, with updated values of $m$ and $r_G$.

To this end  we use IBP to decrease the value of $ n_\Phi(G)$ for the graphs $G\in \mathcal H_{N}^k$ above,
	so that we are only left with graphs in $\mathcal G_N^k$ (i.e. $n_\Phi=0$),
at the cost of producing other graphs in $\cH_{N+1}^k$
which however are multiplied by the parameter
 $\lambda$ and which will be defined as $R_{N+1}^k$.
	
	More precisely we  claim that for each $G\in \mathcal H_N^k$ such that
	$n_\Phi (G) =m$, the term $\mathbf I_G$ in \eqref{e:RNk11}
	can be written as
	\begin{equ}[e:I_Gk]
		\mathbf I_G
		= \sum_{G'\in \mathcal H_N^k, \, n_\Phi(G') =m-2}
		\!\!\!\! a_{G'} \mathbf{I}_{G'}
		\quad
		+\lambda \sum_{G''\in \mathcal H_{N+1}^k }
		b_{G''} \mathbf{I}_{G''}
	\end{equ}
	for some $a_{G'},b_{G''} \in \R$.
	
	Assuming \eqref{e:I_Gk}, we plug all these $\mathbf I_G$ with $n_\Phi (G) =m$
	into \eqref{e:RNk11}, 
	which allows us to write
	\begin{equ}[e:RN2k]
		R_N^k = \sum_{\substack{G\in \mathcal H_N^k \\ n_\Phi(G) \in [0,m-2]\cap 2\mathbb{Z}}}
		\!\!\!\!\bar{r}_G \mathbf{I}_G
		\quad
		+ \lambda \sum_{G\in \mathcal H_{N+1}^k }
		\bar{r}_G \mathbf{I}_G
	\end{equ}
	for  some coefficients $\bar{r}_G\in \R$.
	We then iterate this argument and  plug these into \eqref{e:RN2k}
	to obtain \eqref{e:RN2k} with even lower value of $m$
	(and new coefficients $\bar{r}_G\in \R$).
	After $m/2$ iterations,
	we get
	\begin{equ}[eq:RNk]
		R_N^k = \sum_{G\in \mathcal G_N^k}
		r'_G \mathbf{I}_G
		\quad
		+ \lambda \sum_{G\in \mathcal H_{N+1}^k }
		r'_G \mathbf{I}_G
	\end{equ}
	for some $r'_G$.
	We then see that  the lemma holds for $N$ with
	$F_1,\cdots ,F_{N-1}$ remaining the same,
	$$
	F_N = N! \sum_{G\in \mathcal G_N^k}
	r'_G \mathbf{I}_G
	\qquad \mbox{and}\qquad
	R_{N+1}^k =\sum_{G\in \mathcal H_{N+1}^k }
	r'_G \mathbf{I}_G.
	$$
	Since all the graphs in the last sum
	are finite graphs, $R_{N+1}^k$
	has the form
	\eqref{e:RNk1}  
	for some $m$.

	It remains to prove \eqref{e:I_Gk}, for which
it will be helpful to recall the intuition explained in Remark~\ref{rmk:IBP-graph}.
	
	Fixing $G$, we can assume that $n_\Phi(G) \ge 2$ (otherwise
	if $n_\Phi(G)  =0$ 
	 there is nothing to prove).
	 We then know that there is a vertex $x\in V^0_G$ with $
	\deg(x)<4$ or $x\in  V^\partial_G$ with $\deg(x)=0$. Fix any such vertex $x$.
	Applying \eqref{eq:IBP-C} with $x$ therein being this vertex $x$,
	we have
	\begin{align}\label{eq:IGk}
	\mathbf I_G
	=\Big(
	\sum_{\substack{z\in V^\partial_G \\ \deg z=0, z\neq x} }(1-\deg(z)) \, \mathbf I_{G_{xz}^-}
	\Big) +
	\Big(
	\sum_{\substack{z\in V^0_G \\ \deg z<4, z\neq x} }(4-\deg(z)) \, \mathbf I_{G_{xz}^-}
	\Big)
	- \lambda \mathbf I_{G_{xz}^+}
	\end{align}
	where the graph $G_{xz}^+$ is defined by ``adding a new vertex $z$'':
	$$
	V_{G_{xz}^+} = V_G \cup \{z\},
	\qquad
	E_{G_{xz}^+} = E_G \cup \{\{x,z\}\},
	\qquad
	(z\notin V_G)
	$$
	and the graph $G_{xz}^-$ is defined by  ``connecting $x$ and some $z\in V_G$
	by a new edge''
	$$
	V_{G_{xz}^-} = V_G,
	\qquad
	E_{G_{xz}^-} = E_G \cup \{\{x,z\}\},
	\qquad
	(z\in V_G).
	$$
	This is precisely in the form \eqref{e:I_Gk},
	since $n_\Phi(G_{xz}^-) = n_\Phi(G)-2$.
\end{proof}

We remark that from the above proof,
it is clear that all
the graphs associated with $F_{n,M,\eps}^k$ and $ R_{N+1,M,\eps}^k$  are obtained by the procedure
described below \eqref{eq:IGk},
starting from the `initial graph' with $|V|=k$ and $|E|=0$.
For convenience of estimating the graphs later,
we color the edges of these graphs as follows.
In the above proof, when we use \eqref{eq:IGk} each time,
\begin{itemize}
\item	
	we use red color for the new line $\{x,z\}$ appearing in $G_{xz}^+$;
\item
we use green color for the new line $\{x,z\}$ appearing in $G_{xz}^-$.
\end{itemize}

\begin{remark}
Just as an example for \eqref{eq:IGk} in the above proof:
if $G$ is the last graph in \eqref{e:graph-4pt}, then $x$ can be any point except for the upper-left one.
Taking for instance $x$ to be the upper-right point,
then $G_{xz}^-$ will be the following graphs
\begin{equ}
\begin{tikzpicture}[baseline=-15]
	\node[dot] (x1) at (0,0) {};\node at (-0.3,0) {$x_1$};
	\node[dot] (x2) at (1,0) {};\node at (1.3,0) {$x$};
	\node[dot] (x3) at (0,-1) {};
	\node[dot] (x4) at (1,-1) {};
	\node[dot] (z) at (.5,-.3) {};
	\draw[Cr] (x1) -- (z);\draw[Cg] (x2) -- (z);
		\draw[Phi] (z) -- ++(0.18,-0.18);\draw[Phi] (z) -- ++(-0.18,-0.18);
	\draw[Phi] (x3) -- ++(0.1,0.25);
	\draw[Phi] (x4) -- ++(-0.1,0.25);
\end{tikzpicture}
\qquad
\begin{tikzpicture}[baseline=-15]
	\node[dot] (x1) at (0,0) {};\node at (-0.3,0) {$x_1$};
	\node[dot] (x2) at (1,0) {};\node at (1.3,0) {$x$};
	\node[dot] (x3) at (0,-1) {};
	\node[dot] (x4) at (1,-1) {};
	\node[dot] (z) at (.5,-.3) {};
	\draw[Cr] (x1) -- (z);\draw[Cg,bend left=30] (x2) to (x3);
		\draw[Phi] (z) -- ++(0.18,-0.18);\draw[Phi] (z) -- ++(-0.18,-0.18);\draw[Phi] (z) -- ++(0,-0.25);
	\draw[Phi] (x4) -- ++(-0.1,0.25);
\end{tikzpicture}
\qquad
\begin{tikzpicture}[baseline=-15]
	\node[dot] (x1) at (0,0) {};\node at (-0.3,0) {$x_1$};
	\node[dot] (x2) at (1,0) {};\node at (1.3,0) {$x$};
	\node[dot] (x3) at (0,-1) {};
	\node[dot] (x4) at (1,-1) {};
	\node[dot] (z) at (.5,-.3) {};
	\draw[Cr] (x1) -- (z);\draw[Cg] (x2) -- (x4);
	\draw[Phi] (z) -- ++(0.18,-0.18);\draw[Phi] (z) -- ++(-0.18,-0.18);\draw[Phi] (z) -- ++(0,-0.25);
	\draw[Phi] (x3) -- ++(0.1,0.25);
\end{tikzpicture}
\end{equ}
and $G_{xz}^+$ is then
\begin{equ}
\begin{tikzpicture}
	\node[dot] (x1) at (0,0) {};\node at (-0.3,0) {$x_1$};
	\node[dot] (x2) at (2,0) {};\node at (2.3,0) {$x$};
	\node[dot] (x3) at (0,-1) {};
	\node[dot] (x4) at (2,-1) {};
	\node[dot] (z) at (0.6,-.4) {};
	\node[dot] (w) at (1.4,-.4) {};
	\draw[Cr] (x1) -- (z);
	\draw[Cr] (x2) -- (w);
	\draw[Phi] (z) -- ++(0.18,-0.18);\draw[Phi] (z) -- ++(-0.18,-0.18);\draw[Phi] (z) -- ++(0,-0.25);
	\draw[Phi] (w) -- ++(0.18,-0.18);\draw[Phi] (w) -- ++(-0.18,-0.18);\draw[Phi] (w) -- ++(0,-0.25);
	\draw[Phi] (x3) -- ++(0.1,0.25);
	\draw[Phi] (x4) -- ++(-0.1,0.25);
\end{tikzpicture}
\end{equ}
Note that we also color the edge connected to $x_1$ by red
because of the way it arises in \eqref{e:graph-4pt}.
\end{remark}
	
The reason for coloring the edges is that
we will reduce  graphs in $\cH_N^k$ to trees by ``cutting out'' the green lines.

\begin{lemma}\label{lem:tree}
For each graph $G$ associated with $F_{n,M,\eps}^k$
or $R_{N+1,M,\eps}^k$ in
Lemma~\ref{lem:FRk}, denote by $\widehat G$
the subgraph
formed by all its red lines.

Then, $\widehat G$ has exactly $k$ connected components;
each connected component is a rooted tree, and has exactly one vertex
in $V_G^\partial$ which is regarded as the root of the tree.
 \footnote{Recall that in graph theory, a tree is a graph without any cycle,
and a rooted tree is a tree in which one vertex is designated to be the root.
 In particular a graph consisting of only a single vertex and no edge is trivially a rooted tree.}
\end{lemma}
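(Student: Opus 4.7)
\textbf{Proof proposal for Lemma~\ref{lem:tree}.} The plan is to argue by induction on the number of IBP applications used to produce $G$ from the initial graph. Recall from the proof of Lemma~\ref{lem:FRk} that every graph associated with $F_{n,M,\eps}^k$ or $R_{N+1,M,\eps}^k$ is obtained, starting from the initial graph $G_0$ having $V_{G_0}=V^\partial$ and no edges, by iteratively applying one of two coloured operations: the operation $G\mapsto G^-_{xz}$ adjoins a \emph{green} edge $\{x,z\}$ between two vertices already present, whereas the operation $G\mapsto G^+_{xz}$ adjoins a brand new vertex $z$ (which by definition lies in $V^0$) together with a \emph{red} edge $\{x,z\}$.

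I would prove by induction on the number of IBP steps that, at every stage of the construction, the red subgraph $\widehat G$ enjoys the following three properties:
\begin{enumerate}
\item[(a)] $\widehat G$ spans $V_G$;
\item[(b)] $\widehat G$ is a forest with exactly $k$ connected components;
\item[(c)] each connected component contains exactly one vertex of $V^\partial_G$, which we declare to be its root.
\end{enumerate}
For the base case $G=G_0$, the red subgraph is the edgeless graph on the $k$ vertices of $V^\partial$, so (a)--(c) are immediate.

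For the inductive step, assume (a)--(c) hold for $G$. If the next operation yields $G^-_{xz}$, then $\widehat G$ is unchanged and the three properties transfer at once to the new graph. If the next operation yields $G^+_{xz}$, a fresh vertex $z\in V^0$ and a single red edge $\{x,z\}$ are appended. By the inductive hypothesis $x$ belongs to some component $T$ of $\widehat G$ whose root is some $u^*_m\in V^\partial$. Attaching $z$ to $T$ by the edge $\{x,z\}$ cannot create a cycle (because $z$ is new and so has no previous red neighbours), cannot alter the number of components (the new vertex is immediately wired into $T$), and cannot introduce a second boundary root (because $z\in V^0$). The other components of $\widehat G$ are untouched. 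Hence (a)--(c) persist with the same total count $k$ of trees, with the same $k$ roots in $V^\partial$.

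The only genuinely nontrivial point, and what I expect to be the main thing to check carefully, is the coherence of the colouring convention introduced after the proof of Lemma~\ref{lem:FRk}: a red edge can only be produced by the $G^+_{xz}$ operation, which by construction always attaches to a vertex not previously in $V_G$. This single observation is what simultaneously rules out cycles in $\widehat G$ and forbids the merging of two trees carrying distinct roots. All remaining aspects of the argument reduce to bookkeeping, so no further obstacle is anticipated.
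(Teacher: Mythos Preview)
Your proposal is correct and follows essentially the same approach as the paper's own proof: an induction on the IBP construction, noting that the initial graph gives $k$ trivial trees and that the red operation $G\mapsto G^+_{xz}$ simply attaches a fresh leaf to an existing tree while the green operation leaves $\widehat G$ unchanged. Your version is more explicit in verifying properties (a)--(c), but the underlying argument is identical.
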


\begin{proof}
This is obvious by construction. Indeed, for the initial case
where $|V_G|=k$ and $|E_G|=0$, we have that $\widehat G$ is just $k$ trivial trees.  Assuming that we already know that
a graph $G$ satisfies the property in the lemma,
then,
when a new vertex $z$
is added  which creates a new red edge $\{x,z\}$
as in \eqref{eq:IGk},
it simply adds one more edge to one of the $k$ red trees.
%
(See the figure below for an illustration).
\end{proof}

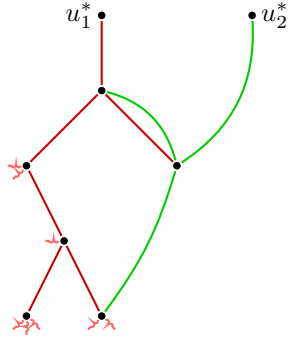
\begin{figure}[h]
\begin{tikzpicture}
	\node[dot] (x1) at (0,0) {};	\node at (-0.3,0) {$u_1^*$};
	\node[dot] (x2) at (2,0) {};\node at (2.3,0) {$u_2^*$};
	\node[dot] (y1) at (0,-1) {};
	\node[dot] (y2) at (-1,-2) {};
	\node[dot] (y3) at (1,-2) {};
	\node[dot] (z1) at (-0.5,-3) {};
	\node[dot] (z2) at (-1,-4) {};
	\node[dot] (z3) at (0,-4) {};
	\draw[Cr] (x1) to (y1);
	\draw[Cr] (y1) to (y2);
	\draw[Cr] (y1) to (y3);
	\draw[Cr] (y2) to (z1);
	\draw[Cr] (z1) to (z2);
	\draw[Cr] (z1) to (z3);
	\draw[Cg,bend left=30] (x2) to (y3);
	\draw[Cg,bend left =30] (y1) to (y3);
	\draw[Cg,bend left =10] (y3) to (z3);
	\draw[Phi] (z2) -- ++(0.18,-0.18);\draw[Phi] (z2) -- ++(-0.18,-0.18);\draw[Phi] (z2) -- ++(0,-0.25);
	\draw[Phi] (z3) -- ++(0.18,-0.18);\draw[Phi] (z3) -- ++(-0.18,-0.18);
	\draw[Phi] (z1) -- ++(-0.25,0);
	\draw[Phi] (y2) -- ++(-0.25,0);\draw[Phi] (y2) -- ++(-0.18,-0.18);
\end{tikzpicture}
\caption{An illustration of trees formed by red lines
in the case of $2$-point correlation. Here
$u_1^*,u_2^* \in V^\partial_G$ and all other vertices are in $V^0_G$.
One of the two trees, i.e. the tree which only contains one single vertex $v$, is trivial.}
\label{Fig1}
\end{figure}



\bl\label{convergencek} For the polynomial weight $\rho$ in \eqref{eq:rho},
it holds that for $\gamma>0$ and some $\ell>0$ and $n\leq N+1$
$$
\lim_{M\to\infty,\eps\to0}\cE^\eps_k F_{n,M,\eps}^k=F_{n}^k \quad\textrm{ in }\quad \bC^{-\gamma}(\rho^\ell),
$$
and
$$\lim_{M\to\infty,\eps\to0}\cE^\eps_kR_{n,M,\eps}^k=R_{n}^{\nu, k} \quad\textrm{ in } \quad\bC^{-\gamma}(\rho^\ell),$$
with $\cE^\eps_k$ being extension opeartor defined in \eqref{def:Ek}.
Here   $F_n^k$ can be written as integrals of the Green function  $C$ of $\frac12(m-\Delta)$ and $R_n^{\nu, k}$ only depends on $C$, $\Phi$. The associated graph of $F_n^k$  is the same as $F_{n,M,\eps}^k$  with $C_{M,\eps}$  in \eqref{def:IGk} replaced by the Green function  $C$ and the sum on $\Lambda_{M,\eps}$ replaced by the integral on $\mR^2$.
\el


The proof of Lemma \ref{convergencek}  is postponed to Section \ref{sec:con}. In the proof we could also give the explicit formula of $R_n^{\nu,k}$. Formally the associated graph of $R_n^{\nu,k}$ is the same as $R_{n,M,\eps}$ with $ C_{M,\eps} $ and $\Phi_{M,\eps}$ in \eqref{def:IGk}  replaced by the Green function  $C$ of $\frac12(m-\Delta)$ and $\Phi$, respectively  and the sum on $\Lambda_{M,\eps}$ replaced by the integral on $\mR^2$.

The following gives the main result of this section.

\bt\label{th:Sk} It holds that
\begin{equ}[e:S-exp-DS-lik]
	S_\lambda^{\nu,k}  = \sum_{n=0}^N  \frac{\lambda^n}{n!} F_{n}^k + \lambda^{N+1} R_{N+1}^{\nu,k},
\end{equ}
with $F_n^k, R_{N+1}^{\nu,k}$ given in Lemma \ref{convergencek}. This equality holds in $\bC^{-\gamma}(\rho^\ell)$ for $\gamma>0$, some $\ell>0$ and polynomial weight $\rho$ in \eqref{eq:rho}.
\et
 \begin{proof}
 By definition of $S^\nu_\lambda$ in \eqref{def:S}, \eqref{co:S}, \eqref{e:S-exp-DSk}  we have
	\begin{align*}
		S_\lambda^{\nu,k}
		&=\lim_{\eps\to 0, M\to\infty}\cE^\eps_kS_{\lambda,M,\eps}^k
		\\
		&=\lim_{\eps\to 0, M\to\infty}\sum_{n=0}^N\frac{\lambda^n}{n!}\cE^\eps_k F_{n,M,\eps}^k
		+\lambda^{N+1}\lim_{\eps\to 0, M\to\infty}\cE^\eps_k R_{N+1,M,\eps}^k
		\\
		&=\sum_{n=0}^N\frac{\lambda^n}{n!}F_{n}^k+\lambda^{N+1}R_{N+1}^{\nu,k},
	\end{align*}
		where we use Lemmas \ref{convergencek} to obtain the last equality. 
	\end{proof}

Now to prove \eqref{e:main}, we only need to show
\begin{proposition}\label{pro} For the polynomial weight $\rho$ in \eqref{eq:rho} and $n\leq N+1$,
	it holds that for $\gamma>0$ and some $\ell>0$
	 $$\|R_n^{\nu,k}\|_{\bC^{-\gamma}(\rho^\ell)}\lesssim1+\lambda^{4k+12n}.$$
\end{proposition}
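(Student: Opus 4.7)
The plan is to first reduce to a uniform-in-$(M,\eps)$ discrete estimate. By Lemma \ref{convergencek} and lower semicontinuity of the weighted Besov norm under the convergence $\cE^\eps_k R_{n,M,\eps}^k\to R_n^{\nu,k}$ in $\bC^{-\gamma}(\rho^\ell)$, it is enough to prove that $\|\cE^\eps_k R_{n,M,\eps}^k\|_{\bC^{-\gamma}(\rho^\ell)}\lesssim 1+\lambda^{4k+12n}$ uniformly in $M,\eps$. Since Lemma \ref{lem:FRk} expresses $R_{n,M,\eps}^k$ as a finite $\mathbb{R}$-linear combination, with coefficients independent of $\lambda$, $M$, $\eps$, of the graph integrals $\mathbf{I}_G$ indexed by $G\in\cH_n^k$, the problem further reduces to bounding each individual $\|\cE_k^\eps\mathbf{I}_G\|_{\bC^{-\gamma}(\rho^\ell)}$.

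Fix $G\in\cH_n^k$ and write out $\mathbf{I}_G$ via \eqref{def:IGk}. The random integrand is a product of at most $k$ factors of the form $\Phi_{M,\eps}(x_m)$ (one per boundary vertex with $\deg u_m^*=0$) and exactly $n$ Wick factors $\Wick{\Phi_{M,\eps}^{4-\deg z}(x_z)}$ with $4-\deg z\in\{0,1,2,3\}$. The power of $\lambda$ comes from applying generalized H\"older's inequality in the probability space with common exponent $p=k+n$, together with the moment estimates of Corollary \ref{co:zmm}: each $\Phi$ factor contributes $(1+\lambda^{4(k+n)})^{1/(k+n)}$ and each $\Wick{\Phi^3}$ factor contributes $(1+\lambda^{12(k+n)})^{1/(k+n)}$. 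Taking the worst-case product gives $1+\lambda^{4k+12n}$, with each factor simultaneously giving a $\bC^{-\gamma,\eps}(\rho^{\kappa_v})$ norm in some weighted space. The choice $p_0>9k(N+1)$ in \eqref{eq:rho} is precisely what guarantees that $p_0/9\geq k+n$, so that Corollary \ref{co:zmm} is applicable at this exponent.

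To now assemble the moment bound with the deterministic spatial integration against the Green's functions $C_{M,\eps}$, pair $\mathbf{I}_G$ against a test function in $\bC^\gamma(\rho^{-\ell})$, swap the expectation and the spatial integral via Fubini, and exploit the tree structure from Lemma \ref{lem:tree}. The red edges form $k$ rooted trees, one at each boundary vertex $u_m^*$. I would peel the trees from the leaves inward: each red edge $\{x_p,x_z\}$ encodes a convolution with $C_{M,\eps}$ that is absorbed by the Schauder estimate (Lemma \ref{lem:Sch}) or equivalently by Young's convolution estimate in weighted Besov spaces, producing a regularity gain of essentially $2-\gamma$ at each step. The green (non-tree) edges incident to $z$ enter as additional multiplicative factors, controlled by the paraproduct/multiplication bounds of Lemma \ref{lem:pow} against the $\bC^{-\gamma,\eps}$ norms already counted in the moment step.

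The main obstacle is the bookkeeping of polynomial weights $\rho^{\ell_v}$ accumulated through this iteration, since every Schauder pairing, commutator of $\nabla_\eps$ with $\rho^j$, and H\"older split across $\Omega$ redistributes weights among factors. This is exactly why $h>0$ is chosen sufficiently small in \eqref{eq:rho}: by \eqref{bd:rho}, each such commutator or redistribution contributes only a harmless multiplicative constant, and the total accumulated weight stays bounded by $\rho^\ell$ for some $\ell=\ell(n,k)$ admissible in Corollary \ref{co:zmm}. Once the weights are consistently managed, the moment bound $1+\lambda^{4k+12n}$ propagates directly to $\|\cE_k^\eps\mathbf{I}_G\|_{\bC^{-\gamma}(\rho^\ell)}$, and passing to the limit via lower semicontinuity yields the claimed bound for $R_n^{\nu,k}$.
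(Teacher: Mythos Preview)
Your overall reduction is the same as the paper's: pass to the discrete bound on each $\mathbf{I}_G$ via Lemma~\ref{convergencek} and Lemma~\ref{lem:FRk}, then peel the red trees from the leaves using the Schauder estimate. The H\"older accounting for the power of $\lambda$ is also in the right spirit.

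The genuine gap is your handling of the green (non-tree) edges. You propose to treat a green edge $\{z,w\}$ as an ``additional multiplicative factor'' $C_{M,\eps}(x_w-\cdot)$ at the leaf $z$, to be controlled by Lemma~\ref{lem:pow}. But Lemma~\ref{lem:pow} needs one factor of strictly positive regularity, and uniformly in $\eps$ the Green function $C_{M,\eps}(x_w-\cdot)$ is only in $\bC^{-\kappa,\eps}$ for $\kappa>0$ (it has a logarithmic singularity), while $\Wick{\Phi_{M,\eps}^{4-\deg z}}$ is also only in $\bC^{-\gamma,\eps}$. The product of two objects of negative regularity is not controlled uniformly in $\eps$; any pointwise argument blows up as $\eps\to 0$. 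Moreover, a green edge joining vertices in two different red trees prevents the integrand from factorizing as $\prod_i F_{T_i}(x_i)$, so you cannot apply Lemma~\ref{lem:A1} and then H\"older in $\Omega$ the way you describe.

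The paper resolves this with the identity \eqref{eq:CZ}, $C_{M,\eps}(x_u-x_v)=\E\big[Z^{(i)}_{M,\eps}(x_u)Z^{(i)}_{M,\eps}(x_v)\big]$, using independent copies $Z^{(i)}$ of the linear solution. This converts every green edge into a pair of \emph{local} factors $Z^{(i)}$ attached to its endpoints, so the integrand becomes a genuine product over the $k$ red trees and the vertex factors land in the sets $D_1,D_2,D_3$ of \eqref{eqdefD}. Corollary~\ref{co:2} then gives uniform $\bC^{-\gamma,\eps}$ bounds for these factors (crucially using $\Phi=Y+Z$ with $Y\in\bC^{\beta,\eps}$, $\beta>0$, so products like $Y\cdot Z^{(i)}$ are well-defined). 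Only after this ``Step~1'' reduction does the leaf-peeling Schauder argument (Step~2) go through, with H\"older in $\Omega$ applied at exponent $k$ across the $k$ trees rather than $k+n$ across all vertices.
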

\begin{proof}
The result  follows from  Lemma \ref{lem:estimate1} below and Lemma \ref{convergencek}.
\end{proof}

\section{Proof of results in Section \ref{sec:3}}

\subsection{Uniform bound on the remainder $R_N$}\label{sec:est}

In this section we give uniform estimates for $\mathbf{I}_G$ defined in \eqref{def:IGk} with $G\in \cH$.
For later use we construct the following random variables such that $\{Z_{M,\eps}^{(i)}\}_{i\in \mN}$ are i.i.d. random variables and each $Z_{M,\eps}^{(i)}$ is an independent copy of $Z_{M,\eps}$.
 We introduce $D_i$ as follows: let $D_0\eqdef \{1\}$ and
\begin{align}\label{eqdefD}&D_1\eqdef\{\Phi_{M,\eps}, Z_{M,\eps}^{(i)},i\in \mN\},\no\\ &D_2\eqdef\{\Wick{\Phi_{M,\eps} {Z}_{M,\eps}^{(i)}},\Wick{\Phi_{M,\eps}^2},\Wick{{Z}_{M,\eps}^{(i)}{Z}_{M,\eps}^{(j)}},  i, j\in \mN, i\neq j \},\\  &D_3\eqdef\{\Wick{\Phi^2_{M,\eps}Z_{M,\eps}^{(i)}},\Wick{\Phi_{M,\eps}^3},\Wick{\Phi_{M,\eps} {Z}_{M,\eps}^{(i)}{Z}_{M,\eps}^{(j)}},\Wick{{Z}_{M,\eps}^{(i)}{Z}_{M,\eps}^{(j)}{Z}_{M,\eps}^{(k)}}, i, j,k\in \mN, i\neq j\neq k \},\no\end{align}
where
$\Wick{\Phi_{M,\eps}^i}$ are defined in \eqref{def:Wick} and for $i,j,k\in \mN, i\neq j\neq k$
\begin{equs}
\Wick{\Phi_{M,\eps} {Z}_{M,\eps}^{(i)}}&\eqdef \Phi_{M,\eps} {Z}_{M,\eps}^{(i)},\qquad \qquad\quad
\Wick{{Z}_{M,\eps}^{(i)}{Z}_{M,\eps}^{(j)}}\eqdef {Z}_{M,\eps}^{(i)}{Z}_{M,\eps}^{(j)},
\\
\Wick{\Phi^2_{M,\eps}{Z}_{M,\eps}^{(i)}}&\eqdef\Wick{\Phi^2_{M,\eps}}{Z}_{M,\eps}^{(i)} ,\qquad\Wick{\Phi_{M,\eps} {Z}_{M,\eps}^{(i)}{Z}_{M,\eps}^{(j)}}\eqdef\Phi_{M,\eps} {Z}_{M,\eps}^{(i)}{Z}_{M,\eps}^{(j)},
\\
 &\Wick{{Z}_{M,\eps}^{(i)}{Z}_{M,\eps}^{(j)}{Z}_{M,\eps}^{(k)}}\eqdef{Z}_{M,\eps}^{(i)}{Z}_{M,\eps}^{(j)}{Z}_{M,\eps}^{(k)} .
 \end{equs}
By probabilistic calculation we have that for $\kappa,\gamma>0$ and $p>1$
\begin{align}\label{eq:re1}
\E\|{Z}_{M,\eps}^{(i)}\|_{\bC^{-\gamma,\eps}(\rho^\kappa)}^p+\E\|\Wick{{Z}_{M,\eps}^{(i)}{Z}_{M,\eps}^{(j)}}\|_{\bC^{-\gamma,\eps}(\rho^\kappa)}^p+\E\|\Wick{{Z}_{M,\eps}^{(i)}{Z}_{M,\eps}^{(j)}{Z}_{M,\eps}^{(k)}}\|_{\bC^{-\gamma,\eps}(\rho^\kappa)}^p\lesssim1,
\end{align}
where  $i\neq j\neq k$ and the proportional constant is independent of $\eps, M$ and there exist random variables
$Z^{(i)}, \Wick{Z^{(i)}Z^{(j)}}, \Wick{Z^{(i)}Z^{(j)}Z^{(k)}}\in L^p(\Omega,C_T\bC^{-\gamma}(\rho^\kappa))$ such that
\begin{align*}
&\E\|\cE^\eps{Z}_{M,\eps}^{(i)}-Z^{(i)}\|_{C_T\bC^{-\gamma,\eps}(\rho^\kappa)}^p+\E\|\cE^\eps\Wick{{Z}_{M,\eps}^{(i)}{Z}_{M,\eps}^{(j)}}-\Wick{Z^{(i)}Z^{(j)}}\|_{C_T\bC^{-\gamma,\eps}(\rho^\kappa)}^p\\&+\E\|\cE^\eps\Wick{{Z}_{M,\eps}^{(i)}{Z}_{M,\eps}^{(j)}{Z}_{M,\eps}^{(k)}}-\Wick{{Z}^{(i)}{Z}^{(j)}{Z}^{(k)}}\|_{C_T\bC^{-\gamma,\eps}(\rho^\kappa)}^p\to0, \textrm{ as } \eps\to0, M\to\infty.
\end{align*}
(c.f. \cite{MW17, GH18a, ZZ18}).
By direct calculation we have
\begin{align}\label{eq:CZ}
 C_{M,\eps}(x-y)=\E[Z_{M,\eps}(x)Z_{M,\eps}(y)].
\end{align}

\bc\label{co:2} For the polynomial weight $\rho$ and $p_0$  in \eqref{eq:rho},  it holds  for any $f_i\in D_i$, $1<p<p_0/9$, $\gamma>0$ that
$$\E\|f_1\|_{\bC^{-\gamma,\eps}(\rho^5)}^p+\E\|f_2\|_{\bC^{-\gamma,\eps}(\rho^9)}^p+\E\|f_3\|_{\bC^{-\gamma,\eps}(\rho^{13})}^p\lesssim 1+\lambda^{12p},$$
with the proportional constant independent of $M,\eps$.
\ec
\begin{proof}
	Similar as before we omit the supscript $M$ in the proof for notation's simplicity.
	 By  Corollary \ref{co:zmm} the desired estimate holds for $f_i=\Wick{\Phi^i}$. For $f_1=Z_\eps^{(i)}, f_2=Z_\eps^{(i)}Z_\eps^{(j)}$ and $f_3=Z_\eps^{(i)}Z_\eps^{(j)}Z_\eps^{(k)}$, $i\neq j\neq k$, the result follows from \eqref{eq:re1}. For other cases we write
\begin{equs}
	  \Wick{\Phi_{\eps} {Z}^{(i)}_{\eps}} & =Y_\eps Z^{(i)}_{\eps}+\Wick{Z_\eps{Z}^{(i)}_{\eps}},\\
	   \Wick{\Phi_{\eps} {Z}^{(i)}_{\eps}{Z}^{(j)}_{\eps}} & =Y_\eps \Wick{{Z}^{(i)}_{\eps}{Z}^{(j)}_{\eps}}+\Wick{Z_\eps {Z}^{(i)}_{\eps}{Z}^{(j)}_{\eps}},\\
	   \Wick{\Phi_{\eps}^2 {Z}_{\eps}^{(i)}} & =Y_\eps^2 {Z}_{\eps}^{(i)}+2Y_\eps\Wick{Z_\eps {Z}_{\eps}^{(i)}}+\Wick{Z_\eps^2 {Z}_{\eps}^{(i)}}.
\end{equs}
where $\Wick{Z_\eps^2 {Z}_{\eps}^{(i)}}=\Wick{Z_\eps^2} {Z}_{\eps}^{(i)}$ and by probabilisitc calculation for $\gamma, \kappa>0, p>1$
 $$\E\|\Wick{Z_\eps^2 {Z}_{\eps}^{(i)}}\|_{\bC^{-\gamma,\eps}(\rho^\kappa)}^p\lesssim1.$$
	  By \eqref{eq:re1}, the same calculation in Corollary \ref{co:zmm} and Lemma \ref{lem:pow}, the desired estimates also hold for the terms $\Wick{\Phi_{\eps} {Z}^{(i)}_{\eps}}$, $\Wick{\Phi_{\eps} {Z}^{(i)}_{\eps}{Z}^{(j)}_{\eps}}$ and $\Wick{\Phi_{\eps}^2 {Z}_{\eps}^{(i)}}$, which implies the result.
\end{proof}

To state the following lemma for $G\in \cH$ with $\mathbf{I}_G$ in \eqref{def:IGk},  we
extend it as periodic functions on $\Lambda_\eps$.

\begin{lemma}\label{lem:estimate1}
	For each $G\in \mathcal H^k_n$, $n\leq N+1$,
that appears in Lemma~\ref{lem:FRk},	
	letting $\mathbf{I}_G$ be the function  introduced in \eqref{def:IGk} and $\rho$ be  polynomial weight  as in \eqref{eq:rho} 
	it holds that for $\gamma>0$ and some $\ell>0$
	\begin{align}\label{est:IG}
	\|\cE^\eps_k\mathbf{I}_G\|_{\bC^{-\gamma}(\rho^\ell)}\lesssim 1+\lambda^{{4k}+12n},\end{align}
	where the proportional constant is independent of $\eps, M$ and $\cE^\eps_k$ is the extension operator introduced in \eqref{def:Ek} in Appendix \ref{App}.
\end{lemma}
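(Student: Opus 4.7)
The plan is to cut every green edge using the Gaussian representation of $C_{M,\eps}$, reducing $\mathbf I_G$ to an iterated convolution over the $k$ red trees of Lemma \ref{lem:tree}, and then bound each per-vertex factor via Corollaries \ref{co:zmm} and \ref{co:2}. First, for each green edge $e = \{u,v\}$ I introduce a fresh independent copy $Z^{(i(e))}_{M,\eps}$ of $Z_{M,\eps}$ (one distinct copy per green edge) and substitute $C_{M,\eps}(x_u - x_v) = \mathbf E[Z^{(i(e))}_{M,\eps}(x_u) Z^{(i(e))}_{M,\eps}(x_v)]$ via \eqref{eq:CZ}. After Fubini, $\mathbf I_G(x_1,\dots,x_k)$ becomes a single expectation in which only red edges remain, with a random function $\mathrm F_z(x_z)$ at each vertex $z$ equal to $\Wick{\Phi_{M,\eps}^{4-\deg(z)}(x_z)}$ multiplied by the $\deg_{\mathrm{green}}(z)$ fresh fields $Z^{(i(e))}_{M,\eps}(x_z)$ attached to green edges incident to $z$ (and analogously at external $u^*_m$). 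By the combinatorics $\deg_{\mathrm{red}}(z) + \deg_{\mathrm{green}}(z) = \deg(z)$, the product $\mathrm F_z$ is precisely an element of $D_{4-\deg_{\mathrm{red}}(z)}$ at internal $z$ (so of some $D_r$ with $r \in \{0,1,2,3\}$) and of $D_{1-\deg_{\mathrm{red}}(u^*_m)}$ at external vertices with index in $\{0,1\}$. Corollary \ref{co:2} then bounds each internal factor by $1 + \lambda^{12}$ in $L^p(\Omega;\bC^{-\gamma,\eps}(\rho^{13}))$; each external factor is either $1$, $Z^{(i)}$ (bounded by $1$ in $L^p$), or $\Phi_{M,\eps}$ (bounded by $1+\lambda^4$ via Corollary \ref{co:zmm}), yielding the sharper external $\lambda^4$ exponent needed for the $\lambda^{4k}$ factor in the target.

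Next I integrate tree by tree. Within each red tree $T_m$ rooted at $u^*_m$, define recursively $H_z(x_z) = \mathrm F_z(x_z)$ if $z$ is a leaf, and $H_z(x_z) = \mathrm F_z(x_z) \prod_{c \text{ child of } z} \cI_\eps H_c(x_z)$ otherwise, where $\cI_\eps$ is the Green-function convolution from \eqref{def:I}. Then
\[
\mathbf I_G(x_1,\dots,x_k) = \mathbf E\Big[\prod_{m=1}^k \mathrm F_{u^*_m}(x_m) \prod_{c \text{ child of } u^*_m} \cI_\eps H_c(x_m)\Big].
\]
Elliptic Schauder for $\cI_\eps$ maps $\bC^{-\gamma,\eps}(\rho^\ell) \to \bC^{2-\gamma,\eps}(\rho^\ell)$, and the multiplicative estimate from Lemma \ref{lem:pow} gives $\bC^{-\gamma}(\rho^{\ell_1}) \cdot \bC^{2-\gamma}(\rho^{\ell_2}) \hookrightarrow \bC^{-\gamma}(\rho^{\ell_1+\ell_2})$ for $\gamma$ small, so by induction on tree depth each $H_z$ lies in $\bC^{-\gamma,\eps}(\rho^{\ell_z})$ with norm dominated by the product of the $\|\mathrm F_w\|$ over $w$ in the subtree at $z$, where $\ell_z$ is a multiple of $13$ controlled by the size of that subtree.

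To conclude, I would combine the $k$ tree bounds using the tensor-product structure of the joint norm on $\Lambda_\eps^k$, pull $\mathbf E$ outside by Jensen's inequality, and apply multi-variable H\"older to separate the $L^p(\Omega)$ moments of the individual factors. This yields $\|\mathbf I_G\|_{\bC^{-\gamma,\eps}(\rho^\ell)} \lesssim (1+\lambda^4)^k (1+\lambda^{12})^n \lesssim 1 + \lambda^{4k+12n}$, which transfers to $\cE^\eps_k \mathbf I_G$ via the extension operator bounds recalled in Appendix \ref{App}. The main obstacle I expect is the bookkeeping of polynomial weight exponents across the tree contractions: every multiplication step inflates the weight by Lemma \ref{lem:pow}, so one must verify that the polynomial weight fixed in \eqref{eq:rho}---with $\delta > 1/2$ and $h$ small enough---accommodates any power of $\rho$ of order bounded by a constant times $n+k$. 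A secondary subtlety is extracting the sharper $\lambda^4$ bound at external vertices, rather than the generic $\lambda^{12}$ of Corollary \ref{co:2}, which is necessary to match the target exponent $\lambda^{4k+12n}$.
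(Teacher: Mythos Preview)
Your proposal is correct and follows essentially the same route as the paper's proof: cut each green edge via $C_{M,\eps}=\E[Z^{(i)}Z^{(i)}]$ with fresh independent copies, factor $\mathbf I_G$ as an expectation of a product over the $k$ red trees of Lemma~\ref{lem:tree}, then bound each tree by an inductive leaf-by-leaf integration using the Schauder gain of $\cI_\eps$ together with the paraproduct estimate of Lemma~\ref{lem:pow}, and finally combine the trees by H\"older in $L^k(\Omega)$ after passing through the per-variable space $\bC^{-\gamma,\eps}_s$ and Lemma~\ref{lem:A1}. Your recursive construction of $H_z$ from the leaves upward is simply the dual presentation of the paper's induction, which instead removes leaves from the full tree; both yield the pointwise bound $\|F_T\|_{\bC^{2-\gamma,\eps}(\rho^\ell)}\lesssim\prod_{v}\|f_v\|_{\bC^{-\gamma,\eps}(\rho^{13})}$ with $f_v\in\cup_{i=0}^3 D_i$, and the two concerns you flag (accumulating weight exponents and the sharper $\lambda^4$ at external vertices via \eqref{est:Phi1} rather than the generic $\lambda^{12}$ of Corollary~\ref{co:2}) are exactly the points the paper tracks explicitly.
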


\begin{proof}
In the proof we use $\rho^\ell$  to denote weight and $\ell$ may change from line to line. We also abuse the notation of $\rho$ for the weight in different spatial dimensions.
By Lemma~\ref{lem:tree} we have $\widehat G$ which is a union of $k$ rooted trees, formed by the red edges in $G$.

\textbf{Step 1. Reduction to trees.}

It will turn out that trees are easier to estimate than general graphs. To reduce our analysis for  $\mathbf{I}_G $ to the case of trees,
we proceed as follows.

Recall the definition \eqref{def:IGk} for $\mathbf{I}_G $.
For each of the green lines $\{u,v\} \in E_G$,
we use \eqref{eq:CZ} to replace each 
$C_{M,\eps}(x_u,x_v)$ in  \eqref{def:IGk}
 by 
 $\E[Z_{M,\eps}^{(i)}(x_u)Z^{(i)}_{M,\eps}(x_v)]$
 with $\{Z_{M,\eps}^{(i)}\}_{i}$ given at the beginning of this section. Each $C_{M,\eps}(x_u,x_v)$ corresponds to different $Z_{M,\eps}^{(i)}$.
This can be done for all the green lines since our graph is finite.  The resulting function is the same as
$\mathbf{I}_G $.

Note that for periodic functions $f$ on $\Lambda_{\eps}$,   $\cI_\eps(f)$ introduced in \eqref{def:I} could be written as $$\cI_\eps(f)=\int_{\Lambda_\eps}C_\eps(x-y)f(y)\dif y.$$

Since
$\widehat G$ is a disjoint union of $k$ trees (by Lemma~\ref{lem:tree}), i.e. $\widehat G = \sqcup_{i=1}^k T_i$, where each $T_i$ is a tree,
we know that
$\mathbf{I}_G $ is the expectation
of a product of $k$ functions,
and each of these  $k$ functions have the following form
\begin{equ}[e:F_T]
F_{T}(x_{u^*})=\int \prod_{\{u,v\}\in E_{T}}C_{\eps}(x_u,x_v)
	\Big( f_1(x_{u^*})^{1-\deg_T(u^*)}
	\prod_{v\in V_{T}\backslash \{u^*\}}f_v(x_v) g_v^T(x_v)
	\Big)
	\prod_{v\in V_{T}\backslash \{u^*\}} \dif x_v,
\end{equ}
where $f_1\in D_1$, $f_v\in \cup_{i=0}^3D_{i}$, $g_v^T=1$ and $\deg_T(v)$ denotes the degree of $v$ in the graph $T$.
Here and below we just write $T$ for $T_i$ to simplify the notation,
and we have introduced the function $g_v^T$ for the purpose of induction later.

For instance,
the graph in Figure \ref{Fig1} then reduces to the one shown in Figure \ref{Fig2} where each tiny green wavy line denotes a factor of  $Z^{(i)}_{M,\eps}$.
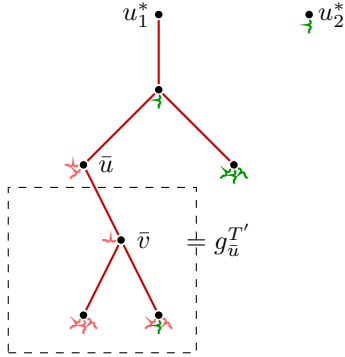
\begin{figure}[h]
\begin{tikzpicture}
	\node[dot] (x1) at (0,0) {};	\node at (-0.3,0) {$u_1^*$};
	\node[dot] (x2) at (2,0) {};\node at (2.3,0) {$u_2^*$};
	\node[dot] (y1) at (0,-1) {};
	\node[dot] (y2) at (-1,-2) {};
	\node[dot] (y3) at (1,-2) {};
	\node[dot] (z1) at (-0.5,-3) {};
	\node[dot] (z2) at (-1,-4) {};
	\node[dot] (z3) at (0,-4) {};
	\draw[Cr] (x1) to (y1);
	\draw[Cr] (y1) to (y2);
	\draw[Cr] (y1) to (y3);
	\draw[Cr] (y2) to (z1);
	\draw[Cr] (z1) to (z2);
	\draw[Cr] (z1) to (z3);

	\draw[Z] (x2) -- ++(0,-0.25);
	\draw[Z] (y1) -- ++(0,-0.25);
	\draw[Z] (y3) -- ++(0.18,-0.18);\draw[Z] (y3) -- ++(-0.18,-0.18);\draw[Z] (y3) -- ++(0,-0.25);
	
	\draw[Phi] (z2) -- ++(0.18,-0.18);\draw[Phi] (z2) -- ++(-0.18,-0.18);\draw[Phi] (z2) -- ++(0,-0.25);
	\draw[Phi] (z3) -- ++(0.18,-0.18);\draw[Phi] (z3) -- ++(-0.18,-0.18);\draw[Z] (z3) -- ++(0,-0.25);
	\draw[Phi] (z1) -- ++(-0.25,0);
	\draw[Phi] (y2) -- ++(-0.25,0);\draw[Phi] (y2) -- ++(-0.18,-0.18);

 \draw[dashed] (-2,-2.3) rectangle (.5,-4.5);
 	\node  at (-.7,-2) {$\bar u$};
		\node  at (-0.2,-3) {$\bar v$};
\node  at (0.8,-3) {$=g_{\bar u}^{T'}$};	
\end{tikzpicture}
\caption{An illustration for reducing graphs to trees, and
inductive integrations}
\label{Fig2}
\end{figure}

	
We first consider the degenerate case separately where $\deg(u^*)=0$, namely $F_T$ is simply $f_1 (x_{u^*})$ with $f_1\in D_1$.
We use \eqref{est:Phi1} and \eqref{eq:re1} and easily find for $\gamma>0$ and $3p<p_0$ with  $p_0$ in \eqref{eq:rho} 
	\begin{align}\label{est1}
	\E\|Z_{M,\eps}\rho\|_{\bC^{-\gamma,\eps}}^p+\E\|\Phi_{\eps}\rho^5\|_{\bC^{-\gamma,\eps}}^p\lesssim 1+\lambda^{4p},\end{align}

In the following we only need to consider the case that  $\deg(u^*)=1$ i.e.  nontrivial trees. We claim that for $\gamma>0$  and $\ell$ large enough,  \begin{align}\label{FT}\|F_T\|_{\bC^{2-\gamma,\eps}(\rho^\ell)}\lesssim \prod_{v\in V_T} \|f_v\|_{\bC^{-\gamma,\eps}(\rho^{13})},\end{align}
with $f_v\in \cup_{i=0}^3D_i$ defined in \eqref{eqdefD}. By the reduction 
we have for $\gamma>0$
\begin{align}\label{eq:Ho1}
&\|\mathbf{I}_G\|_{\bC^{-\gamma,\eps}_s(\Lambda_{\eps}^k,\rho^\ell)}
\lesssim \E\prod_{i=1}^k \|F_{T_i}\|_{\bC^{-\gamma,\eps}(\rho^\ell)}\no\\\lesssim& \prod_{\deg(u^*_i)=1} \Big(\E \|F_{T_i}\|_{\bC^{2-\gamma,\eps}(\rho^\ell)}^k\Big)^{\frac1k}\prod_{\deg(u^*_i)=0} \Big(\E \|F_{T_i}\|_{\bC^{-\gamma,\eps}(\rho^5)}^k\Big)^{\frac1k},
\end{align}
where $\bC^{-\gamma,\eps}_s(\Lambda_\eps^k,\rho^\ell)$ is the H\"older space for each variable on $\Lambda_\eps^k$ introduced in \eqref{def:cs} in Appendix \ref{App}.
By the claim \eqref{FT} and  Corollary \ref{co:2}, we deduce that for $\deg(u_i^*)=1$, $\gamma>0$
\begin{align}\label{eq:EF1}\Big(\E \|F_{T_i}\|_{\bC^{2-\gamma,\eps}(\rho^\ell)}^k\Big)^{1/k}\lesssim1+\lambda^{12n_i},\end{align}
 where $n_i$ is given by the number of vertices in $V_0$ for the tree $T_i$ and $\sum_{i=1}^kn_i\leq n$  and we use $9kn_i\leq 9k (N+1)<p_0$ in \eqref{eq:rho}.
  For the case that $\deg(u_i^*)=0$, by \eqref{est1},
 \begin{align}\label{eq:EF0}\Big(\E \|F_{T_i}\|_{\bC^{-\gamma,\eps}(\rho^5)}^k\Big)^{1/k}\lesssim 1+\lambda^{4}.\end{align}
As a result, \eqref{est:IG}  follows from Lemma \ref{euniform}, Lemma \ref{lem:A1} and arbitrary $\gamma$. 
It remains to prove \eqref{FT}.

	\textbf{Step 2. Estimate of each tree.}
	
	
	
	
	
	Fixing a rooted tree $T$ as above, we will integrate the variables in $F_T$ in \eqref{e:F_T}
	from the leaves \footnote{Recall that in graph theory a leaf of a rooted tree is a vertex of degree $1$ and is not the root. Any nontrivial rooted tree must have at least one leaf.}
	 of the tree $T$ and estimate the effect
	of the integrations. More precisely, we claim that
	for every subtree $\bar T$ of $T$ which contains the root $u^*$,
	\eqref{e:F_T} still holds with $T$ and $g^T$ on the RHS replaced by $\bar T$ and $g^{\bar T}$, 
where the functions $g$  (which depend on $\bar T$) are such that
 $\|g\|_{\bC^{s,\eps}(\rho^\ell)}$ is bounded by  $\Pi_{j\in I_g}\|f_j\|_{\bC^{-\gamma,\eps}(\rho^{13})}, f_j\in \cup_{i=0}^3D_i$ for some index set $I_g\subset \mN$ and $s>\gamma>0$, $n>0$.

We prove by induction downwards on  the value of $|V_{\bar T}|$.
We already have that \eqref{e:F_T} holds for $\bar T= T$ where $g=1$ trivially
satisfies the claimed bound.

Assume that \eqref{e:F_T} holds with \rmb{${F_{T}(x_{u^*})}$ on the LHS, and  the RHS  written as the integral in terms of a subtree $\bar T$.}
Let
 $\bar v$ be a leaf of $\bar T$.
Then there is a unique $\bar u\in V_{\bar T}$ such that $\bar e \eqdef \{\bar u,\bar v\}\in E_{\bar T}$. We then have
\begin{equ}\label{eq:barT}
{F_{T}(x_{u^*})}
=\int  \!\!\!\!\!\! \prod_{\{u,v\}\in E_{\bar T}\backslash \{\bar e\}}
\!\!\!\!\!\! C_{\eps}(x_u,x_v)\!\!\!\!\!\!
	\prod_{v\in V_{\bar T}\backslash \{u^*,\bar v\}}\!\!\!\!\!\!  (f_v(x_v) g_v^{\bar T}(x_v))
\Big(
\int C_{\eps}(x_{\bar u},x_{\bar v})
(f_{\bar v}(x_{\bar v}) g_{\bar v}^{\bar T}(x_{\bar v}))
 \dif x_{\bar v}
\Big)	
	\!\!\!\!\!\!\prod_{v\in V_{\bar T}\backslash \{u^*,\bar v\}}\!\!\!\!\!\! \dif x_v,
\end{equ}
and we now estimate the integration over $\bar v$ in the parenthesis,
which will imply that  \eqref{e:F_T} holds for the subtree $T'=(V_{\bar T} \backslash\{\bar v\}, E_{\bar T}\backslash \{\bar e\})$ with $g_{\bar u}^{T'}(x_{\bar u})$ redefined as
\begin{align}\label{eq:baru}
g_{\bar u}^{T'}(x_{\bar u})=
\Big(
\int C_{\eps}(x_{\bar u},x_{\bar v})
(f_{\bar v}(x_{\bar v}) g_{\bar v}^{\bar T}(x_{\bar v}))
 \dif x_{\bar v}
\Big)g_{\bar u}^{\bar T}(x_{\bar u})	
\end{align}
and $f_v, g_v$ for $v \neq \bar u$ left unchanged. (See Figure \ref{Fig2} for an illustration).

The integration over $\bar v$ in the parenthesis
 is in one of the following forms:
	\begin{align}\label{zmm1}
\cI_\eps(g)(x_{\bar u}),\quad	\cI_\eps(f_3)(x_{\bar u}),\quad \cI_\eps(f_2g)(x_{\bar u}),\quad \cI_\eps(f_1g)(x_{\bar u}),
	\end{align}
	with $f_i\in D_i$.
	By  \eqref{eq:sch} and Lemma \ref{lem:pow}, we have that for	$1>s>\gamma>0$,
\begin{equs}[e:zmm1-bd]
	\|\cI_\eps(g)\|_{\bC^{2-\gamma,\eps}(\rho^\ell)}
	&\lesssim \|g\|_{\bC^{s,\eps}(\rho^\ell)}\\
\|\cI_\eps(f_3)\|_{\bC^{2-\gamma,\eps}(\rho^{13})}
&\lesssim \|f_3\|_{\bC^{-\gamma,\eps}(\rho^{13})}
\\
	\|\cI_\eps(gf_2)\|_{\bC^{2-\gamma,\eps}(\rho^{\ell+13})}
	&\lesssim \|g\|_{\bC^{s,\eps}(\rho^{n})}\|f_2\|_{\bC^{-\gamma,\eps}(\rho^{13})}
\\
\|\cI_\eps(gf_1)\|_{\bC^{2-\gamma,\eps}(\rho^{\ell+13})}
&\lesssim \|g\|_{\bC^{s,\eps}(\rho^{n})}\|f_1\|_{\bC^{-\gamma,\eps}(\rho^{13})}.
\end{equs}
	By the inductive assumption on $g$,
	we have that the left-hand sides
	are   bounded by $$\Pi_{j\in I_g}\|f_j\|_{\bC^{-\gamma,\eps}(\rho^\ell)}\|f_{\bar v}\|_{\bC^{-\gamma,\eps}(\rho^{13})},$$
	with $f_{\bar v}\in \cup_{i=0}^3D_i$.	
This implies that \eqref{e:F_T} holds for the smaller subtree $ T'$, namely the number of vertices decreases by one.
	
By inductively decreasing
the value of $|V_{\bar T}|$, we know that \eqref{e:F_T} holds for
the subtree $\bar T$ which only has $2$ vertices including $u^*$.
Now $F_{ T}$ is simply {\it equal to} one of the cases
in \eqref{zmm1}, so we can just bound $F_{ T}$ by
\eqref{e:zmm1-bd}. On the other hand, integration over one vertices counts for at most one $\|f_i\|_{\bC^{-\gamma,\eps}(\rho^{13})}$, which gives  \eqref{FT}.
\end{proof}

\vspace{2ex}

\subsection{Convergence of $F_{n,M,\eps}^k$ and $R_{n,M,\eps}^k$}\label{sec:con}

In this section we give the proof of Lemmas \ref{convergencek}, i.e. the discrete integral converges to the corresponding continuous one.
By the uniform bound in Section \ref{sec:est}  we already know that each $\|\cE^\eps_kF_{n,M,\eps}^k\|_{\bC^{-\gamma}(\rho^\ell)}$ and $\|\cE^\eps_kR_{n,M,\eps}^k\|_{\bC^{-\gamma}(\rho^\ell)}$ 
are uniformly bounded, which implies there exists a convergent subsequence. Now, we want to give the explicit formula of the limit.


\begin{proof}[Proof of Lemmas \ref{convergencek}]
	Since we consider the limits of $\cE^\eps_kF_{n,M,\eps}^k$ and $\cE^\eps_kR_{n,M,\eps}^k$, which only depend on the law of $\Phi_{{M,\eps}}$ and $Z_{M,\eps}$,  we can assume the setting as in the proof of Theorem \ref{lem:cm}, i.e. we fix a stochasitic basis $(\Omega,\cF, \mathbf{P})$ and we have random variables $(\Phi_{{M,\eps}},Z_{M,\eps}, Y_{M,\eps})$ and $(\Phi,Z,Y)$ such that the convergence in \eqref{eq:PhiY1}-\eqref{eq:Phic} holds. We also have all the random variables $$\{Z^{(i)}_{M,\eps}, \Wick{Z^{(i)}_{M,\eps}Z^{(j)}_{M,\eps}},\Wick{Z^{(i)}_{M,\eps}Z^{(j)}_{M,\eps}Z^{(k)}_{M,\eps}},i\neq j\neq k\}$$ and their continuous limit $\{Z^{(i)}, \Wick{Z^{(i)}Z^{(j)}},\Wick{Z^{(i)}Z^{(j)}Z^{(k)}},i\neq j\neq k\}$ on the same stochastic basis,
	which can be done by Skorohod Theorem. As a result, every element in $\cup_{i=1}^3D_i$ and their continuous limit can be realized on this stochastic basis. We also view $\Phi_{{M,\eps}}, Y_{M,\eps}$ and $Z^{(i)}_{M,\eps}$ as periodic functions on $\Lambda_{\eps}$.
	
Combining the proof in Theorem \ref{lem:cm} and Corollary \ref{co:2} we know that for every element $f_i\in D_i$, $\cE^\eps f_i$ converges to the corresponding continuous one in $L^p(\Omega;\bC^{-\gamma}(\rho^{13}))$ for $\gamma>0, p>1$ with $9p<p_0$ for $p_0$ in \eqref{eq:rho}. For example, we consider
$\Wick{\Phi^2_{M,\eps}Z_{M,\eps}^{(i)}}$ and
 by similar calcuation as  \eqref{eq:PhiY} we obtain
	 $$\E\|\mathcal{E}^\eps (Y^2_{M,\eps}Z_{M,\eps}^{(i)})-Y^2Z^{(i)}\|_{\bC^{-\gamma}(\rho^{13})}^p \lesssim o(\eps),$$ for $\gamma>0$, where the proportional constant is independent of $M$.
	  For other terms in $\Wick{\Phi^2_{M,\eps}Z_{M,\eps}^{(i)}}$ we have similar convergence. In the following $n$ may change for different convergences.

 For each graph we consider each $\mathbf{I}_G$ and as in the proof of Lemma \ref{lem:estimate1} we know
	$$\mathbf{I}_G(x_1,\dots,x_k)=\E \Big(\Pi_{i=1}^k F_{T_i}(x_i) \Big),$$
	with $F_{T_i}$ defined in \eqref{e:F_T} and $T_i$ being trees and subgraphs for $G$. By  the uniform bounds  obtained in \eqref{eq:EF0} and \eqref{eq:EF1}, we first consider the convergence of $\cE^\eps F_{T_i}$ in $L^k(\Omega, \bC^{-\gamma}(\rho^\ell))$ for each tree $T_i$ and $\gamma>0$. For the degenerate case where $\deg(u^*)=0$, i.e. $F_{T_i}$ is just $f_1(x_i)$ with $f_1\in D_1$, we have proved the convergence.
	
	In the following we consider the nontrivial case, i.e. $\deg(u^*)=1$. The idea is to use Lemma \ref{Cepro}, Lemma \ref{lem:Epro} and uniform estimate in Corollary \ref{co:2} to exchange the extension operator $\cE^\eps$ to every leaf in the tree up to an $o(\eps)$ constant. We consider the subtree $\bar{T}$ of $T_i$ and  we want to prove for each subtree $\bar{T}$ as in Step 2 in the proof of Lemma \ref{lem:estimate1} and $0<\gamma<s<1$
	\begin{align}\label{eq:co}
	&\E\Big\|\mathcal{E}^\eps F_{T_i}(x_{u^*})- \int \prod_{\{u,v\}\in E_{\bar T}} C(x_{u}-x_v)
	\prod_{v\in V_{\bar{T}}\backslash \{u^*\}}(\cE^\eps{f}_v)(x_v)(\cE^\eps g_v^{\bar T})(x_v)\prod_{v\in V_{\bar{T}}\backslash \{u^*\}}\dif x_v\Big\|_{\bC^{2-s}(\rho^\ell)}^k\lesssim o(\eps),
	\end{align}
	We do induction on $|V_{\bar T}|$ and
	we start from the subtree $\bar{T}$  containing the root $u^*$ with $|V_{\bar T}|=2$ and  by Step 2 in the proof of Lemma \ref{lem:estimate1}, we have
	$$F_{T_i}(x_{u^*})=\int C_\eps(x_{u^*},x_v)f_v(x_v)g_v^{\bar T}(x_v)\dif x_v,$$
	with $f_v\in \cup_{i=0}^3D_i$ and $g_v^{\bar T}$ as in the proof of Lemma \ref{lem:estimate1}. By Lemma \ref{Cepro} and uniform bounds of $\cE^\eps F_{T_i}$  we know that for $s>\gamma>0$  and some $n>0$
	\begin{align}\label{eC}&\E\Big\|\mathcal{E}^\eps F_{T_i}(x_{u^*})- \int C(x_{u^*}-x_v)\mathcal{E}^\eps (f_v(x_v)g_v^{\bar T}(x_v))\dif x_v\Big\|_{\bC^{2-s}(\rho^\ell)}^k\lesssim o(\eps).
	\end{align}
	Using Lemma \ref{lem:Epro} we find
	\begin{align*}&\E\Big\|\mathcal{E}^\eps F_{T_i}(x_{u^*})- \int C(x_{u^*}-x_v)(\mathcal{E}^\eps f_v)(x_v)(\cE^\eps g_v^{\bar T})(x_v)\dif x_v\Big\|_{\bC^{2-s}(\rho^\ell)}^k\lesssim o(\eps).
	\end{align*}
	Hence \eqref{eq:co} holds with $\bar{T}$ satisfying $|V_{\bar{T}}|=2$. Suppose that \eqref{eq:co} holds with $\bar T$ satisfying $|V_{\bar T}|\leq n$. Now we consider the subgraph $\bar T$ with $|V_{\bar T}|=n+1$. By the Step 2 in the proof of Lemma \ref{lem:estimate1} we could find a $\bar v$ as a leaf of $\bar{T}$ and a unique $\bar u\in V_{\bar T}$ such that \eqref{eq:barT} holds. We set $g_{\bar u}^{T'}(x_{\bar u})$ as in \eqref{eq:baru} and use induction for $T'=\bar T\backslash \{\bar v\}$ to have
	\begin{align*}
	&\E\Big\|\mathcal{E}^\eps F_{T_i}(x_{u^*})- \int \prod_{\{u,v\}\in E_{\bar T}\backslash \{\bar e\}} C(x_{u}-x_v)
	\prod_{v\in V_{\bar{T}}\backslash \{u^*,\bar v\}}(\cE^\eps{f}_v)(x_v)(\cE^\eps g_v^{T'})(x_v)\prod_{v\in V_{\bar{T}}\backslash \{u^*,\bar v\}}\dif x_v\Big\|_{\bC^{2-s}(\rho^\ell)}^k\\&\lesssim o(\eps),
	\end{align*} For $\cE^\eps g_{\bar u}(x_{\bar u})$ we use Lemma \ref{Cepro} and Lemma \ref{lem:Epro} to have
	\begin{align*}&\E\Big\|\mathcal{E}^\eps g_{\bar u}^{T'}(x_{\bar u})- \cE^\eps g_{\bar u}^{\bar T}(x_{\bar u}) \int C(x_{\bar u}-x_{\bar v})(\mathcal{E}^\eps f_{\bar v})(x_{\bar v})(\cE^\eps g_{\bar v}^{\bar T})(x_{\bar v})\dif x_{\bar v}\Big\|_{\bC^{2-s}(\rho^\ell)}^k\lesssim o(\eps).
	\end{align*}
Hence \eqref{eq:co} holds for $\bar T$ satisfying $|V_{\bar T}|= n+1$ and for $\bar T=T_i$,
 which combined with  the convergence of $\cE^\eps f_v$ implies that
\begin{align}\label{eq:con}
&\E\Big\|\mathcal{E}^\eps F_{T_i}(x_{u^*})- \int \prod_{\{u,v\}\in E_{T_i}} C(x_{u}-x_v)
 \prod_{v\in V_{T_i}\backslash \{u^*\}}\bar{f}_v(x_v)\prod_{v\in V_{T_i}\backslash \{u^*\}}\dif x_v\Big\|_{\bC^{2-s}(\rho^\ell)}^k\lesssim o(\eps),
\end{align}
	 with $\bar{f}_v=\lim_{\eps\to0}\cE^\eps f_v$. Here for the second term we abuse the notation $\bar{f}_v(x_v)$ which means a distribution in $\bC^{-\gamma}(\rho^\ell)$ and the integral of $C(x_u-x_v)$ w.r.t. $x_v$ means the operator $\frac12(m-\Delta)^{-1}$ acts on the corresponding distributions $\bar{f}_v$. Now we obtain the convergence of each $F_{T_i}$ and give the explicit formula for the limit $F_n^k$ and $R_n^{\nu,k}$.
	
	  We use $G$ to denote the graph associaed with $F_n^k$. To write $F^k_n$ as the integrals of $C$, i.e.
	  $$\int  \Big(\prod_{\{u,v\}\in E_G}C(x_u-x_v)\Big)\prod_{v\in V^0_G} \dif x_v ,$$
	   which we denote by $\tilde{F}^k_n$, we only need to replace all the correlation of $Z^{(i)}$ in $F_n^k$ by $C$. However, since $Z^{(i)}$ is not a usual function, we need more argument to explain this replacement and it suffices to prove  $\<F^k_n,\varphi\>=\<\tilde{F}^k_n,\varphi\>$ with $0\leq\varphi\in \mathcal{S}(\mathbb{R}^{2k})$. As $Z^{(i)}$ is not a function, we first  replace each $Z^{(i)}$ in $F^k_n$ by $Z^{(i)}*\varphi_\eps$ with $\{\varphi_\eps\}_\eps$ being  standard mollifiers and we denote the resulting function by $F^{k,\eps}_n$. This means we replace each $\bar f_v$ in the second term of \eqref{eq:con} by  $f_v^\eps$, which converges to $\bar f_v$ in  $\bC^{-\gamma}(\rho^\ell)$. To compare $F_n^k$ with $F_n^{k,\eps}$ we are in a similar situation as  the convergence of $F_{n,M,\eps}^k$ to $F_n^k$ with the sum on $\Lambda_\eps$ replaced by the integral on $\mR^2$. Then the same argument as reduction to trees and indunction on subtrees implies that
	  $$|\langle F^{k,\eps}_n-F^k_n,\varphi\rangle|\rightarrow0,\quad \textrm{ as }\eps \to0.$$
	  For $F^{k,\eps}_n$ we could replace each $\E[(Z^{(i)}*\varphi_\eps)(x) (Z^{(i)}*\varphi_\eps)(y)]$ by $\tilde{C}_\eps(x-y)\eqdef(\varphi_\eps*C*\varphi_\eps)(x-y)$ by Fubini's Theorem. Now we check that $F^{k,\eps}_n$ converge to $\tilde{F}^k_n$. For each graph $G$ associated with $F_n^k$ we abuse the notation $m$ to denote the Lebesgue measure $\prod_{v\in V_G}\dif x_v$.
	  We then choose a finite reference  measure $\mu$ which is absolutely continuous w.r.t. $m$ and has the $C$ associated with the red lines and $\varphi$ as the density. Then it suffices to check all the green line (means $\tilde{C}_\eps$) is uniform integrable under this finite measure $\mu$ since $\tilde{C}_\eps$ converges to $C$ in measure. To obtain uniform integrability we prove that the integral of the square of all the green lines  w.r.t. $\mu$ is uniformly
bounded, which follows by the same argument as what we proved in Lemma \ref{lem:estimate1} since adding extra $Z^{(i)}$ to $f_v$ at each point will not influence our proof. Thus
$$|\langle F^{k,\eps}_n-\tilde F^k_n,\varphi\rangle|\rightarrow0,\quad \textrm{ as }\eps \to0,$$
which implies $F^k_n=\tilde F^k_n$. Now the result follows.
	\end{proof}
	
We remark that one could also use \cite[Theorem~A.3]{KPZJeremy} to prove the convergence of $F_n^{k,\eps}$ to $\tilde F_n^k$, though it would require some adaptation since
the setting is slightly different e.g.  $C$ would need to be decomposed as a compact supported function plus a smooth remainder.

\section{Short distance behavior of correlations}\label{sec:short}
In this section we use integration by parts formula and similar estimates to derive the short distance behavior of the $2$-point correlation function and $4$-point correlation function.

 To compare $S(x_0-y_0)-C(x_0-y_0)$ we use integration by parts for the last term of \eqref{e:S2}. 
Choosing
$F(\Phi_{M,\eps})= \lambda \cI_\eps(\Wick{\Phi_{M,\eps}^3})(x) $ in \eqref{eq:IBP-C} we obtain
\begin{equ}
	3\lambda\int_{\Lambda_{M,\eps}} C_{M,\eps}(x-z)^2  \E \Big( \Wick{\Phi_{M,\eps}(z)^2}\Big) \dif z
	= \lambda 
	\E \Big( \Phi_{M,\eps}(x) \cI_\eps(\Wick{    \Phi_{M,\eps}^3})(x) \Big) 
	+
	\lambda^2\E\Big((\cI_\eps(\Wick{    \Phi_{M,\eps}^3})(x))^2\Big).
\end{equ}
Substitute this into \eqref{e:S2} we obtain
\begin{equs}[eq:Phi3]
	S^2_{\lambda,M,\eps} & (x_0-y_0)
	 =  C_{M,\eps}(x_0-y_0)
	+\lambda^2 
	\E \Big( \cI_\eps(\Wick{\Phi_{M,\eps}^3})(x_0)\cI_\eps (\Wick{\Phi_{M,\eps}^3})(y_0)\Big)
	\\
	&+9\lambda^2 \int_{\Lambda_{\eps}^2} C_{M,\eps}(y_0-z)C_{M,\eps}(x_0-z)  C_{M,\eps}(z-z_1)^2
	\E \Big(   \Wick{ \Phi_{M,\eps}(z_1)^2} \Big)\,\dif z_1\dif z
	\\&-3\lambda^3\int_{\Lambda_{\eps}}  C_{M,\eps}(y_0-z)C_{M,\eps}(x_0-z) 
	\E \Big( (\cI_\eps( \Wick{\Phi_{M,\eps}^3})(z))^2 \Big) \dif z.
\end{equs}
In the following we give the proof of Theorem \ref{thm:2point}. In the proof we view all the functions on $\Lambda_{M,\eps}$ as periodic functions on $\Lambda_{\eps}$ and we will use Lemma \ref{lem:A2} to bound each term.

\begin{proof}[Proof of Theorem \ref{thm:2point}] In the proof we choose $p_0>18$ for the weight $\rho$ in \eqref{eq:rho} and  use $\rho^\ell$ to denote the weight and $\ell$ may change from line to line. We also abuse the notation of $\rho$ for different spatial dimensions.  We write the last three terms in \eqref{eq:Phi3} as $\sum_{i=1}^3I_i$, which are graphically
\begin{equ}[eq:Phi4g]
\lambda^2
	\begin{tikzpicture}[baseline=-20]
	\node[dot] (x0) at (0,0) {};  \node at (-0.3,0) {$x_0$};
	\node[dot] (z) at (0.5,-1) {};
	\node[dot] (y0) at (2,0) {}; \node at (2.3,0) {$y_0$};
	\node[dot] (w) at (1.5,-1) {};
	\draw[C,bend right=30] (x0) to (z);
	\draw[C,bend left=30] (y0) to (w);
	\draw[Phi] (z) -- ++(0.18,-0.18);\draw[Phi] (z) -- ++(-0.18,-0.18);\draw[Phi] (z) -- ++(0,-0.25);
	\draw[Phi] (w) -- ++(0.18,-0.18);\draw[Phi] (w) -- ++(-0.18,-0.18);\draw[Phi] (w) -- ++(0,-0.25);
	\end{tikzpicture}
	+ 9\lambda^2
	\begin{tikzpicture}[baseline=-20]
	\node[dot] (x0) at (0,0) {}; \node at (-0.3,0) {$x_0$};
	\node[dot] (z) at (1,0) {};
	\node[dot] (y0) at (2,0) {}; \node at (2.3,0) {$y_0$};
	\node[dot] (z1) at (1,-1) {};
	\draw[C,bend right=30] (x0) to (z);
	\draw[C,bend left=30] (y0) to (z);
	\draw[C,bend left=30] (z) to (z1);
	\draw[C,bend right=30] (z) to (z1);
	\draw[Phi] (z1) -- ++(0.18,-0.18);\draw[Phi] (z1) -- ++(-0.18,-0.18);
	\end{tikzpicture}
 -3 \lambda^3
	\begin{tikzpicture}[baseline=-20]
	\node[dot] (x0) at (0,0) {}; \node at (-0.3,0) {$x_0$};
	\node[dot] (z) at (1,0) {};
	\node[dot] (y0) at (2,0) {}; \node at (2.3,0) {$y_0$};
	\node[dot] (z1) at (0.5,-1) {};
	\node[dot] (z2) at (1.5,-1) {};
	\draw[C,bend right=30] (x0) to (z);
	\draw[C,bend left=30] (y0) to (z);
	\draw[C] (z) to (z1);
	\draw[C] (z) to (z2);
	\draw[Phi] (z1) -- ++(0.18,-0.18);\draw[Phi] (z1) -- ++(-0.18,-0.18);\draw[Phi] (z1) -- ++(0,-0.25);
	\draw[Phi] (z2) -- ++(0.18,-0.18);\draw[Phi] (z2) -- ++(-0.18,-0.18);\draw[Phi] (z2) -- ++(0,-0.25);
	\end{tikzpicture}
\end{equ}
	It suffices to calculate the $\bC^{2-\gamma,\eps}(\rho^\ell)$-norm for each graph and $\gamma>0$. By Corollary \ref{co:zmm} and \eqref{eq:sch}  we know for $\gamma>0$ and some $\ell>0$
	$$\|I_1\|_{\bC^{2-\gamma,\eps}(\Lambda_\eps^2,\rho^\ell)}\lesssim\lambda^2\E\|\Wick{\Phi_{M,\eps}^3}\|_{\bC^{-\gamma,\eps}(\rho^\ell)}^2\lesssim \lambda^2+\lambda^{26}.$$
Similarly, by translation invariance,
$$J(z)\eqdef
	\E \Big(  \cI_\eps(\Wick{\Phi_{M,\eps}^3})(z)^2  \Big) $$
is a constant independent of $z$, and we can write $I_3$ as
	$-3\lambda^3 (C_{M,\eps}*C_{M,\eps})(x_0-y_0)J$.
	By Corollary \ref{co:zmm} and \eqref{eq:sch}  we know for $\gamma>0$ and some $\ell>0$
	$$\|J\|_{L^{\infty,\eps}(\rho^n)}\lesssim 1+\lambda^{24}.$$
	Moreover, by \eqref{eq:sch2}, $C_{M,\eps}*C_{M,\eps}(x_0-y_0)\in \bC^{2-\gamma,\eps}(\Lambda_{\eps}^2)$, which  implies
	$$\|I_3\|_{\bC^{2-\gamma,\eps}(\Lambda_\eps^2,\rho^n)}\lesssim \lambda^3+\lambda^{27}.$$
	For $I_2$ 
	we set
	$$J_1\eqdef\int C_{M,\eps}(z-z_2)^2\E\Big( \Wick{\Phi_{M,\eps}(z_2)^2}\Big)\dif  z_2, $$
	which is also a constant by translation invariance.
	Then  we could write $I_2$ as
	$9\lambda^2 (C_{M,\eps}*C_{M,\eps})(x_0-y_0)J_1$
	and by \eqref{eq:sch2} and Corollary \ref{co:zmm} we have for $\gamma>0$
	\begin{align*}
	|J_1|\lesssim\|J_1\|_{L^{\infty,\eps}(\rho^\ell)}\lesssim 
	\E\|\Wick{\Phi_{M,\eps}^2}\|_{\bC^{-\gamma,\eps}(\rho^\ell)}
	\lesssim 1+\lambda^{12}.
	\end{align*}
	Using  $(C_{M,\eps}*C_{M,\eps})(x_0-y_0)\in \bC^{2-\gamma,\eps}(\Lambda_{\eps}^2)$ we deduce
	$$\|I_2\|_{\bC^{2-\gamma,\eps}(\Lambda_\eps^2,\rho^\ell)}\lesssim \lambda^2+\lambda^{14}.$$
	Now the first result follows from Lemma \ref{lem:A1} and the fact that  we can view $S^2_{\lambda,M,\eps}-C_{M,\eps}$ as a function of one variable by translation invariance.
	
	 For the second result, note that by Lemma \ref{euniform} $\|\mathcal{E}^\eps(S^2_{\lambda,M,\eps}-C_{M,\eps})\|_{\bC^{2-\gamma}(\mR^4,\rho^\ell)}$  is uniformly bounded w.r.t. $\eps, M$ with $\cE^\eps$ defined for functions on $\Lambda_{\eps}^2$ and there exists a subsequence converges, which is still denoted by $\eps, M$. We would like to prove the limit of $\mathcal{E}^\eps(S^2_{\lambda,M,\eps}-C_{M,\eps})$ given by $S^{\nu,2}_\lambda-C$.
	 By Lemma \ref{convergencek} we know the limit of
	 $\cE^\eps_2 (S^2_{\lambda,M,\eps}-C_{M,\eps})$ given by $S^{\nu,2}_\lambda-C$. Moreover, by \eqref{einner} we know  for $\varphi\in \mathcal{S}(\mathbb{R}^4)$ with compact support Fourier transform
	 \begin{align*}
	 &\lim_{\eps\rightarrow0,M\rightarrow\infty}\langle \mathcal{E}^\eps(S^2_{\lambda,M,\eps}-C_{M,\eps}), \varphi\rangle=\lim_{\eps\rightarrow0,M\rightarrow\infty}\langle S^2_{\lambda,M,\eps}-C_{M,\eps}, \varphi\rangle_\eps\\=&\lim_{\eps\rightarrow0,M\rightarrow\infty}\langle \cE^\eps_2 (S^2_{\lambda,M,\eps}-C_{M,\eps}), \varphi\rangle=\< S^{\nu,2}_\lambda-C,\varphi\>,
	 \end{align*}
	 which  by lower semicontinuity of the norm implies
	 $$\|S^{\nu,2}_\lambda-C\|_{\bC^{2-\gamma}(\mR^4,\rho^\ell)}\lesssim \lambda^2+\lambda^{27}.$$
	 Since $$\cE^\eps_2(S^2_{\lambda,M,\eps}-C_{M,\eps})(x,y)=\cE^\eps_2(S^2_{\lambda,M,\eps}-C_{M,\eps})(x-y,0),$$
	  the limit $S^{\nu,2}_\lambda-C$ also satisfies the same property and can be viewed as a function on $\mR^2$. Hence,
	   $$\|S^{\nu,2}_\lambda-C\|_{\bC^{2-\gamma}(\mR^2,\rho^\ell)}\lesssim \lambda^2+\lambda^{27}.$$
\end{proof}



In the following we consider  the connected $4$-point function and
 we give the proof of Theorem \ref{th:main3}.

\begin{proof}[Proof of Theorem \ref{th:main3}] In the proof we omit the subscript $M,\eps$ and $\Lambda_{M,\eps}$ for notation's simplicity. We also write $S(x,y)=S^2_{\lambda,M,\eps}(x,y)$. We choose $p_0>36$ for the weight $\rho$ in \eqref{eq:rho} and will use $\rho^\ell$ to denote the weight  and $\ell$ may change from line to line. We introduce the following notation
	$$\cI f(x)\eqdef \int C(x-y)f(y)\dif y,\quad \cJ f(x,x_1)\eqdef \int C(x-z)C(x_1-z)f(z)\dif z,$$
	and
	$$\cJ_1 f(x,x_1,x_2)\eqdef  \int C(x-z)C(x_1-z)C(x_2-z)f(z)\dif z.$$

	The idea of the proof is to use integration by parts formula for $S^4_\lambda(x_1,x_2,x_3,x_4)$ and we will find some  $C(x_i-x_j)$, $i,j\in \{1,2,3,4\}$ appear. However, $C$ is not in the H\"older space $\bC^{2-\gamma,\eps}(\rho^\ell)$, $\gamma>0$. By \eqref{eq:DS1} and Theorem \ref{thm:2point} we find
	\begin{align}\label{eq:Sd}
	S(x_i-x_j)=C(x_i-x_j)-R(x_i-x_j),
	\end{align}
	with $R\in \bC^{2-\gamma,\eps}(\Lambda_\eps^2,\rho^\ell)$. Here by \eqref{eq:DS1}
\begin{align}\label{eqR}R(x_i-x_j)=\lambda \E\Big(\Phi(x_j)\cI(\Wick{\Phi^3})(x_i)\Big).
\end{align}
 Hence, we use $C(x_i-x_j)$ from $S(x_i-x_j)$ to cancel with the corresponding $C(x_i-x_j)$ in the decomposition of $S^4_\lambda$.

To be more precise, for $S^4_{\lambda}=\E\Big[\prod_{i=1}^4\Phi(x_i)\Big]$ we use integration by parts formula \eqref{eq:Phik} with $k=4$ to have
\begin{align*}
S^4_\lambda(x_1,x_2,x_3,x_4)
& =C(x_1-x_2)S(x_3-x_4)+C(x_1-x_3)S(x_2-x_4)+C(x_1-x_4)S(x_2-x_3)
\\&\qquad-\lambda \E\Big(\Phi(x_2)\Phi(x_3)\Phi(x_4)\cI(\Wick{\Phi^3})(x_1)\Big).
\end{align*}
By \eqref{eq:Sd} this implies that
$U^4$ can be written as
\begin{align}
&R(x_1-x_2)C(x_3-x_4)+R(x_1-x_3)C(x_2-x_4)+R(x_1-x_4)C(x_2-x_3)\label{eq:a1}\\&-\lambda \E\Big(\Phi(x_2)\Phi(x_3)\Phi(x_4)\cI(\Wick{\Phi^3})(x_1)\Big)-\bar{U}\label{eq:ii1}
\end{align}
and by Lemma \ref{lem:A1}
 $$\bar{U}(x_1,x_2,x_3,x_4)=R(x_1-x_2)R(x_3-x_4)+R(x_1-x_3)R(x_2-x_4)+R(x_1-x_4)R(x_2-x_3)\in \bC^{2-\gamma,\eps}(\Lambda_\eps^4,\rho^\ell).$$
By Theorem \ref{thm:2point} we have
$$\|\bar{U}\|_{\bC^{2-\gamma,\eps}(\Lambda_\eps^4,\rho^\ell)}\lesssim \lambda^4+\lambda^{54}.$$
Since by \eqref{eq:sch} $\cI$ has smoothing effect, we use IBP for the first term in \eqref{eq:ii1} with the isolated  point $x_4$  to have two terms, which by \eqref{eqR} cancel with the first two terms in \eqref{eq:a1}. As a result, we obtain that
$U^4$ can be written as
\begin{align}\label{ei:1}
&R(x_1-x_4)C(x_2-x_3)+\lambda^2 \E\Big(\Phi(x_2)\Phi(x_3)\cI(\Wick{\Phi^3})(x_1)\cI(\Wick{\Phi^3})(x_4)\Big)\\
&-3\lambda \E\Big(\Phi(x_2)\Phi(x_3)\cJ(\Wick{\Phi^2})(x_1,x_4)\Big)-\bar{U}.\label{ei:2}
\end{align}
We further apply IBP for each term in \eqref{ei:1}-\eqref{ei:2}.
For $R(x_1-x_4)C(x_2-x_3)$ in \eqref{ei:1} we use integration by parts \eqref{eq:DS2} for $R$ with $x_4$ to have
\begin{align}\label{e:i3}R(x_1-x_4)C(x_2-x_3)=&3C(x_2-x_3)\lambda
 \E\Big(\cJ(\Wick{\Phi^2})(x_1,x_4)\Big)\\&-\lambda^2 C(x_2-x_3) \E\Big(\cI(\Wick{\Phi^3})(x_1)\cI(\Wick{\Phi^3})(x_4)\Big)\no,\end{align}
where the second line cancels with one term obtained from the second term   in \eqref{ei:1} by  IBP. In fact using IBP for $x_2$ the second term in \eqref{ei:1} could be written as
\begin{align}
&\lambda^2 C(x_2-x_3) \E\Big(\cI(\Wick{\Phi^3})(x_1)\cI(\Wick{\Phi^3})(x_4)\Big)\no\\&+3
\lambda^2  \E\Big(\Phi(x_3)\cJ(\Wick{\Phi^2})(x_2,x_4)\cI(\Wick{\Phi^3})(x_1)\Big)\label{eq:i1}
\\&+3
\lambda^2 \E\Big(\Phi(x_3)\cJ(\Wick{\Phi^2})(x_1,x_2)\cI(\Wick{\Phi^3})(x_4)\Big)\label{eq:i2}\\&-\lambda^3\E\Big(\Phi(x_3)\cI(\Wick{\Phi^3})(x_2)\cI(\Wick{\Phi^3})(x_1)\cI(\Wick{\Phi^3})(x_4)\Big)\label{eq:zm}.
\end{align}
As mentioned above the first term cancels with the second term for $R(x_1-x_4)C(x_2-x_3)$. By using IBP for the isolated point $x_3$ again, the  term in \eqref{eq:zm} could be written as
\begin{align}
&-3\lambda^3\E\Big(\cJ(\Wick{\Phi^2})(x_2,x_3)\cI(\Wick{\Phi^3})(x_1)\cI(\Wick{\Phi^3})(x_4)\Big)\label{eq:ii2}\\
&-3\lambda^3 \E\Big(\cI(\Wick{\Phi^3})(x_2)\cJ(\Wick{\Phi^2})(x_1,x_3)\cI(\Wick{\Phi^3})(x_4)\Big)\no
\\&-3\lambda^3\E\Big(\cI(\Wick{\Phi^3})(x_2)\cI(\Wick{\Phi^3})(x_1)\cJ(\Wick{\Phi^2})(x_3,x_4)\Big),\no
\end{align}
which share the following same graph
$$
\begin{tikzpicture}
	\node[dot] (x1) at (0,0) {};   	\node[dot] (x2) at (2,0) {};
	\node[dot] (x3) at (0,-1.5) {};   	\node[dot] (x4) at (2,-1.5) {}; 	
	\node[dot] (z1) at (0.5,-0.5) {};	\node[dot] (z2) at (1.5,-0.5) {};
	\node[dot] (z) at (1,-1.2) {};
	\draw[C,bend right=20] (x1) to (z1);
	\draw[C,bend left=20] (x2) to (z2);
	\draw[Phi] (z1) -- ++(0.18,-0.18);\draw[Phi] (z1) -- ++(-0.18,-0.18);\draw[Phi] (z1) -- ++(0,-0.25);
	\draw[Phi] (z2) -- ++(0.18,-0.18);\draw[Phi] (z2) -- ++(-0.18,-0.18);\draw[Phi] (z2) -- ++(0,-0.25);
	\draw[C,bend right=20] (x3) to (z);
	\draw[C,bend left=20] (x4) to (z);
\draw[Phi] (z) -- ++(0.18,0.18);\draw[Phi] (z) -- ++(-0.18,0.18);
	\end{tikzpicture}
$$
and one term like
$\lambda^4\E\Big(\Pi_{i=1}^4\cI(\Wick{\Phi^3})(x_i)\Big).$
	By \eqref{eq:sch} and Corollary \ref{co:zmm}
we know for $\gamma>0$  and $9p<p_0$
\begin{align}\label{eq:cI1}
\E\|\cI(\Wick{\Phi^3})\|_{\bC^{2-\gamma,\eps}(\rho^\ell)}^p\lesssim \E\|\Wick{\Phi^3}\|_{\bC^{-\gamma,\eps}(\rho^\ell)}^p\lesssim1+\lambda^{12p},
\end{align}
and
\begin{align}\label{eq:cJ1}
\E\|\cJ(\Wick{\Phi^2})\|_{\bC^{2-\gamma,\eps}(\rho^\ell)}^p\lesssim \E\|\Wick{\Phi^2}\|_{\bC^{-\gamma/2,\eps}(\rho^\ell)}^p\lesssim1+\lambda^{12p},\end{align}
which by Lemma \ref{lem:A1} 
implies that the $\bC^{2-\gamma,\eps}(\Lambda_\eps^4,\rho^\ell)$-norm for the term in \eqref{eq:zm} is uniformly bounded by $\lambda^3+\lambda^{52}$ up to a multiplicative constant. Here we abuse the notation of $\rho$ for different spatial dimensions. 


Now we consider the first term in \eqref{ei:2} and we apply IBP for the isolated point $x_2$ to write it as
\begin{align}
&-3\lambda C(x_2-x_3)  \E\Big(\cJ(\Wick{\Phi^2})(x_1,x_4)\Big)-6\lambda \E\Big(\Phi(x_3)\cJ_1(\Phi)(x_1,x_2,x_4)\Big)\no
\\&+3\lambda^2 \E\Big(\Phi(x_3)\cI(\Wick{\Phi^3})(x_2)\cJ(\Wick{\Phi^2})(x_1,x_4)\Big)\label{eq:i3}.
\end{align}
We find that the first term cancels with the first term on the RHS of \eqref{e:i3}.
 We apply IBP for the second term for the isolated point $x_3$ and obtain
\begin{align}\label{eq:i5}
&-6\lambda  \int C(x_1-z)C(x_4-z)C(x_2-z)C(x_3-z)\dif z\\&+6\lambda^2\E\Big(\cI(\Wick{\Phi^3})(x_3)\cJ_1(\Phi)(x_1,x_2,x_4)\Big).\label{eq:i4}
\end{align}
We apply IBP for the terms in \eqref{eq:i1}, \eqref{eq:i2} and \eqref{eq:i3} for $x_3$, which have the same graph, and we obtain three terms (see \eqref{eq:zm2} below) similar as \eqref{eq:i4} and the following terms
\begin{align*}
&9
\lambda^2 \E\Big(\cJ(\Wick{\Phi^2})(x_2,x_4)\cJ(\Wick{\Phi^2})(x_1,x_3)\Big)-3\lambda^3\E\Big(\cJ(\Wick{\Phi^2})(x_2,x_4)\cI(\Wick{\Phi^3})(x_1)\cI(\Wick{\Phi^3})(x_3)\Big)
\\&+9
\lambda^2\E\Big(\cJ(\Wick{\Phi^2})(x_1,x_2)\cJ(\Wick{\Phi^2})(x_3,x_4)\Big) -3\lambda^3\E\Big(\cJ(\Wick{\Phi^2})(x_1,x_2)\cI(\Wick{\Phi^3})(x_4)\cI(\Wick{\Phi^3})(x_3)\Big)
\\&+9
\lambda^2 \E\Big(\cJ(\Wick{\Phi^2})(x_1,x_4)\cJ(\Wick{\Phi^2})(x_2,x_3)\Big)-3\lambda^3\E\Big(\cJ(\Wick{\Phi^2})(x_1,x_4)\cI(\Wick{\Phi^3})(x_2)\cI(\Wick{\Phi^3})(x_3)\Big)
,\end{align*}
where they have the following two graphs
$$
\begin{tikzpicture}
	\node[dot] (x1) at (0,0) {};   	\node[dot] (x2) at (2,0) {};
	\node[dot] (x3) at (0,-1.5) {};   	\node[dot] (x4) at (2,-1.5) {}; 	
	\node[dot] (z1) at (1,-0.4) {};
	\node[dot] (z) at (1,-1.1) {};
	\draw[C,bend right=10] (x1) to (z1);
	\draw[C,bend left=10] (x2) to (z1);
	\draw[Phi] (z1) -- ++(0.18,-0.18);\draw[Phi] (z1) -- ++(-0.18,-0.18);
	\draw[C,bend left=10] (x3) to (z);
	\draw[C,bend right=10] (x4) to (z);
\draw[Phi] (z) -- ++(0.18,0.18);\draw[Phi] (z) -- ++(-0.18,0.18);
	\end{tikzpicture}
\qquad
\begin{tikzpicture}
	\node[dot] (x1) at (0,0) {};   	\node[dot] (x2) at (2,0) {};
	\node[dot] (x3) at (0,-1.5) {};   	\node[dot] (x4) at (2,-1.5) {}; 	
	\node[dot] (z1) at (.7,-0.4) {};\node[dot] (z2) at (1.3,-0.4) {};
	\node[dot] (z) at (1,-1.1) {};
	\draw[C,bend right=10] (x1) to (z1);
	\draw[C,bend left=10] (x2) to (z2);
\draw[Phi] (z1) -- ++(0.18,-0.18);\draw[Phi] (z1) -- ++(-0.18,-0.18);
\draw[Phi] (z1) -- ++(0,-0.25);
\draw[Phi] (z2) -- ++(0.18,-0.18);\draw[Phi] (z2) -- ++(-0.18,-0.18);
\draw[Phi] (z2) -- ++(0,-0.25);
	\draw[C,bend left=10] (x3) to (z);
	\draw[C,bend right=10] (x4) to (z);
\draw[Phi] (z) -- ++(0.18,0.18);\draw[Phi] (z) -- ++(-0.18,0.18);
	\end{tikzpicture}
$$
By \eqref{eq:cI1} and \eqref{eq:cJ1} $\bC^{2-\gamma,\eps}(\Lambda_{\eps}^4,\rho^\ell)$-norm of all the above terms are uniformly bounded by $\lambda^2+\lambda^{39}$ up to a multiplicative constant.

Now the only term in $U^4$ not in $\bC^{2-\gamma,\eps}(\rho^\ell)$ is given by the following terms in \eqref{eq:i5} and \eqref{eq:i4} and similar terms as \eqref{eq:i4}:
\begin{align}
&-6\lambda  \int C(x_1-z)C(x_4-z)C(x_2-z)C(x_3-z)\dif z\label{eq:zm4}\\&+6\lambda^2\E\Big(\cI(\Wick{\Phi^3})(x_3)\cJ_1(\Phi)(x_1,x_2,x_4)\Big)+6\lambda^2\E\Big(\cI(\Wick{\Phi^3})(x_1)\cJ_1(\Phi)(x_2,x_3,x_4)\Big)\label{eq:zm2}
\\&+6\lambda^2\E\Big(\cI(\Wick{\Phi^3})(x_2)\cJ_1(\Phi)(x_1,x_3,x_4)\Big)+6\lambda^2\E\Big(\cI(\Wick{\Phi^3})(x_4)\cJ_1(\Phi)(x_1,x_2,x_3)\Big).\no
\end{align}
The  term in \eqref{eq:zm4} belongs to $\bC^{2-\gamma,\eps}(\Lambda_\eps^4,\rho^\ell)$ by \eqref{eq:sch3} and Lemma \ref{lem:A1} and could be bounded by $\lambda$ up to a multiplicative constant.
The last four terms have the same graph. We only consider the term in \eqref{eq:zm2} and the other terms could be estimated similarly. By \eqref{eq:sch3} and Corollary \ref{co:zmm} we know for $\gamma>0$  and $3p<p_0$
$$\E\|\cJ_1(\Phi)\|_{\bC^{2-\gamma,\eps}(\rho^\ell)}^p\lesssim\E\|\Phi\|_{\bC^{-\gamma/2,\eps}(\rho^\ell)}^p\lesssim1+\lambda^{4p},$$
which combined with \eqref{eq:cI1} and Lemma \ref{lem:A1} implies that the $\bC^{2-\gamma,\eps}(\Lambda_\eps^4,\rho^\ell)$-norm of the terms in
\eqref{eq:zm2}  are uniformly bounded. Now the first result follows.

 Using Lemma \ref{euniform} we obtain $\|\mathcal{E}^\eps U^4_{M,\eps}\|_{\bC^{2-\gamma}(\mR^8,\rho^\ell)}$ is uniformly bounded w.r.t. $\eps$, $M$ with $\cE^\eps$ defined for functions on $\Lambda_\eps^4$. We define the limit of a subsequence  as $U^{\nu,4}_\lambda$, which is also the limit of $\mathcal{E}_4^\eps U^4_{M,\eps}$ by similar argument as in the proof of Theorem \ref{thm:2point}.  On the other hand, by  \eqref{eq:sch3} and Lemmas \ref{lem:A1},  \ref{euniform}, $$\cE^\eps\cJ_2(x_1,x_2,x_3,x_4)\eqdef\mathcal{E}^\eps\int_{\Lambda_{M,\eps}} C_{M,\eps}(x_1-z)C_{M,\eps}(x_4-z)C_{M,\eps}(x_2-z)C_{M,\eps}(x_3-z)\dif z$$ is uniform bounded in $\bC^{2-\gamma}(\rho^\ell)$, which implies that there exists a subsequence, still denoted by $\eps, M$, converges. Then by \eqref{einner} for $\varphi(x_1,x_2,x_3,x_4)=\prod_{i=1}^4\varphi_i(x_i)$ and $\varphi_i\in \cS(\mR^2)$ with Fourier transform compact support, it holds
 $$\lim_{M\to\infty,\eps\to0}\<\cE^\eps J_2,\varphi\>=\lim_{M\to\infty,\eps\to0}\<\cE^\eps_4 J_2,\varphi\>.$$
 Then by a  similar argument  as in the proof  in Section \ref{sec:con} for $\cE^\eps_4 J_2$ we replace $C_{M,\eps}(x_i-z)$ by $\E(Z_{M,\eps}^{(i)}(x_i)Z_{M,\eps}^{(i)}(z))$ for $i=2,3,4$ and apply Lemmas \ref{lem:Epro} and \ref{Cepro} to obtain that
 $\cE^\eps_4\cJ_2$ converges to $\int C(x_1-z)C(x_4-z)C(x_2-z)C(x_3-z)\dif z$ when testing positive $\varphi$. The the lower-semicontinuity of the norm gives the result.
%
%
\end{proof}

 \appendix
\renewcommand{\appendixname}{Appendix~\Alph{section}}
\renewcommand{\theequation}{A.\arabic{equation}}

\section{Discrete Besov spaces}\label{App}
In this section we introduce  Besov spaces on the lattice $\Lambda_{\eps}=\eps \mZ^d $ where $\eps=2^{-N}, N\in \mN\cup\{0\},$ from \cite{MP19,GH18a}  and  Besov spaces on $\mR^d$ from \cite{Tri78}. For $f\in \ell^1(\Lambda_{\eps})$ and $g\in L^1(\eps^{-1}\mT^d)$ we define the Fourier and the inverse Fourier transform as
$$\cF f(\xi)=\eps^d\sum_{x\in \Lambda_{\eps}}f(x)e^{-2\pi \iota \xi\cdot x},\quad \cF^{-1} g(x)=\int_{\eps^{-1}\mT^d} g(\xi)e^{2\pi \iota \xi\cdot x}\dif \xi,$$
for $\xi\in \eps^{-1}\mT^d, x\in \Lambda_{\eps}$. We use $\eps=0$  to denote the continuous setting with $\cF_{\mR^d}$ and $\cF_{\mR^d}^{-1}$ being the usual Fourier transform and its inverse on $\mR^d$.
 Let $(\varphi_j)_{j\geq-1}$ be a dyadic partition of unity on $\mathbb{R}^d$. We define the dyadic partition of unity for $x\in \eps^{-1}\mathbb{T}^d$:
  \begin{align}\label{dyadic}\varphi_j^\eps(x)=\begin{cases}\varphi_j(x), \qquad & j<j_\eps,\\
  1-\sum_{j<j_\eps}\varphi_j(x), \qquad & j=j_\eps.
  \end{cases}
  \end{align}
  Here $j_\eps:=\inf\{ j:\textrm{supp} \varphi_j\cap \partial (\eps^{-1}\mathbb{T}^d)\neq \emptyset\}$.

  Now we define the Littlewood-Paley blocks for distributions on $\Lambda_{\eps}$ by
$$\Delta_j^\eps f:=\cF^{-1}(\varphi_j^\eps\cF f),$$
which leads to the definition of weighted Besov spaces. Let $\rho$ denote a polynomial weight of the  form $\rho(x)=(1+|x|^2)^{-\delta/2}$ for some $\delta\geq0$. For $\alpha\in \mR, p, q\in [1,\infty]$ and $\eps\in [0,1]$ we define the weighted Besov spaces on $\Lambda_{\eps}$ given by the norm
$$\|f\|_{B^{\alpha,\eps}_{p,q}(\Lambda_\eps,\rho)}=\Big(\sum_{-1\leq j\leq j_\eps}2^{\alpha jq}\|\Delta_j^\eps f\|_{L^{p,\eps}(\rho)}^q\Big)^{1/q}<\infty.$$
 If $\eps=0$, $B^{\alpha,\eps}_{p,q}(\rho)$ is the classical Besov space
$B^{\alpha}_{p,q}(\rho)$ on $\mR^d$.
Similarly, we  extend the definition of Besov space to functions on $\Lambda_{\eps}^k=\eps\mZ^{dk}$, $k\in \mN$, which is denoted by $B^{\alpha,\eps}_{p,q}(\Lambda_\eps^k,\rho)$.
Whenever there is no confusion we also write $B^{\alpha,\eps}_{p,q}(\Lambda_\eps^k,\rho)$ as $B^{\alpha,\eps}_{p,q}(\rho)$ for simplicity.
We also set $\bC^{\alpha,\eps}(\rho)\eqdef B^{\alpha,\eps}_{\infty,\infty}(\rho)$ and $H^{\alpha,\eps}(\rho)\eqdef B^{\alpha,\eps}_{2,2}(\rho)$.  We also define Besov-H\"older space w.r.t. each component in $\Lambda_{\eps}^k$ for $k\in \mathbb{N}$: for $f:\Lambda_{\eps}^k\to \mR$, $\alpha\in \mR$,
\begin{align}\label{def:cs}\|f\|_{\bC^{\alpha,\eps}_{s}(\Lambda_{\eps}^k,\rho)}\eqdef\sup_{-1\leq j_i\leq j_\eps}2^{\alpha  \sum_{i=1}^kj_i}\Big\|(\prod_{i=1}^k\Delta_{j_i,x_i}^\eps )f\Big\|_{L^{\infty,\eps}(\rho)}<\infty.\end{align}
Here we write $x=(x_1,...,x_k)\in \Lambda_{\eps}^k$ and $\Delta_{j_i,x_i}^\eps$ means  the Littlewood-Paley blocks for the $i$-th component $x_i\in \Lambda_{\eps}$. If $\eps=0$, $\bC^{\alpha}_{s}(\mR^{kd},\rho)$ is the  Besov space w.r.t. each component in $\mR^d$.
   The duality on $\Lambda_\eps$ is given by
 $$\<f,g\>_\eps\eqdef \eps^d\sum_{x\in \Lambda_\eps}f(x)g(x).$$
We also set
$$(f*_\eps g)(x)\eqdef \eps^d\sum_{y\in \Lambda_\eps}f(x-y)g(y).$$
For the polynomial weight $\rho$, by \cite[Lemma A.1]{GH18a} it holds  for $\alpha\in \mR, p,q\in [1,\infty]$ that
\begin{align}\label{eq:eqv}\|f\|_{B^{\alpha,\eps}_{p,q}(\rho)}\asymp \|f\rho\|_{B^{\alpha,\eps}_{p,q}},\end{align}
where the implicit constant is independent of $\eps$.

For a function $f$ on $\eps^{-1}\mathbb{T}^d$ we use $f_{\textrm{ext}}$ to denote its periodic extension to $\mathbb{R}^d$.

\textbf{Extension operator:}

We follow \cite{MP19} to introduce the following extension operator. Recall $\varphi_j^\eps$ given in \eqref{dyadic}.  We choose a symmetric function $\psi\in C_c(\mathbb{R}^d)$ satisfying the following property:
\begin{itemize}
\item $\sum_{k\in\mathbb{Z}^d}\psi(\cdot-k)=1,$
\item $\psi=1$ on supp$\varphi_j$ for $j<j_1,$
\item $(\textrm{supp} \psi\cap \textrm{supp} (\varphi_j^1)_{\textrm{ext}})\backslash \mathbb{T}^d\neq\emptyset \Rightarrow j=j_1.$
\end{itemize}
Here  $(\varphi_j^1)_{\textrm{ext}}$ means periodic extension to $\mathbb{R}^d$ of function $\varphi_j^1$ on $\mathbb{T}^d$. More precisely,
In \cite{MP19} such $\psi$ is called smear function. It is easy to see that $\psi(\eps\cdot)$ satisfies the same property on $\Lambda_\eps$. Set $w^\eps=\mathcal{F}^{-1}_{\mathbb{R}^d}\psi(\eps\cdot)$ and  define
\begin{align}\label{def:E}
\cE^\eps f(x)\eqdef\eps^d\sum_{y\in \Lambda_{\eps}} w^\eps(x-y)f(y), \quad f\in B^{\alpha,\eps}_{p,q}(\rho).\end{align}
We also introduce the following extension operators for functions  $f:\Lambda_{\eps}^k\to \mR$:
\begin{align}\label{def:Ek}
\cE_k^\eps f(x_1,\dots,x_k)\eqdef\eps^{dk}\sum_{\substack{y_i\in \Lambda_{\eps},\\i=1,\dots,k}}\Big(\prod_{i=1}^k w^\eps(x_i-y_i)\Big)f(y_1,\dots,y_k).\end{align}
 It is easy to see that
\begin{align}\label{einner}\langle \cE^\eps f,g\rangle=\langle f,w^\eps*g\rangle_\eps,\quad f,g\in L^{2,\eps}(\Lambda_{\eps}).\end{align}

We have the following elementary result for the Besov-H\"older space.

\bl\label{lem:A1}  For $\eps\in[0,1)$, if $\alpha<0$, then $\bC^{\alpha,\eps}_{s}(\Lambda_{\eps}^k,\rho)\subset \bC^{k\alpha,\eps}(\Lambda_{\eps}^k,\rho)$;
if $\alpha>0$, then $$\|f\|_{\bC^{\alpha,\eps}(\Lambda_{\eps}^k,\rho)}\asymp \sum_{i=1}^k\|f\|_{\bC_{x_i}^{\alpha,\eps}(\Lambda_{\eps}^k,\rho)}.$$
Here $\|f\|_{\bC_{x_i}^{\alpha,\eps}(\Lambda_{\eps}^k,\rho)}:=\sup_{x_j\in \Lambda_{\eps}, j\neq i}\|f(x_1,...,x_{i-1},\cdot,x_{i+1},...,x_k)\|_{\bC^{\alpha,\eps}(\Lambda_{\eps},\rho(x_1,...,x_{i-1},\cdot,x_{i+1},...,x_k))}.$

\el
\begin{proof} 
Let   $\Delta_j^\eps$ and $\Delta_{j_i,x_i}^\eps$ be the Littlewood--Paley blocks in $\Lambda_\eps^k$ and $\Lambda_\eps$, respectively.
By Littelewood--Paley decomposition we have
\begin{align*}
&2^{\alpha kj}\|\Delta_{j}^\eps f\|_{L^{\infty,\eps}(\rho)}
\lesssim\sum_{\substack{-1\leq j_i\leq j_\eps,\\ i=1,\dots , k}}2^{\alpha  kj}\Big\|\Delta_{j}^\eps(\prod_{i=1}^k\Delta_{j_i,x_i}^\eps )f\Big\|_{L^{\infty,\eps}(\rho)}
\\
&\lesssim 2^{\alpha  kj}\sum_{j_i\lesssim j}\Big\|(\prod_{i=1}^k\Delta_{j_i,x_i}^\eps )f\Big\|_{L^{\infty,\eps}(\rho)}
\lesssim 2^{\alpha  kj}\sum_{j_i\lesssim j}2^{-\alpha  \sum_{i=1}^kj_i}\|f\|_{\bC_s^{\alpha,\eps}(\Lambda_\eps^k,\rho)}\lesssim\|f\|_{\bC_s^{\alpha,\eps}(\Lambda_\eps^k,\rho)},
\end{align*}
where in the second inequality we use $\varphi_j(\xi_1,...,\xi_k)\Pi_{i=1}^k\varphi_{j_i}(\xi_i)\neq0$ only if $j_i\lesssim j$. Here and in the following, with a small abuse the notation we write $\varphi_j$ in different dimensions.
Thus the first result follows from the definition.
In the following we prove the second result. By definition we have
 \begin{align*}&\|f\|_{\bC_{x_i}^{\alpha,\eps}(\Lambda_{\eps}^k,\rho)}=\sup_{x_j\in \Lambda_{\eps}, j\neq i}\sup_{l}2^{l\alpha}\|\Delta^\eps_{l,x_i}f(x_1,...,x_{i-1},\cdot,x_i,...,x_k)\|_{L^\infty(\Lambda_{\eps},\rho(x_1,...,x_{i-1},\cdot,x_i,...,x_k))}
 \\&=\sup_{l}2^{l\alpha}\|\Delta^\eps_{l,x_i}f\|_{L^\infty(\Lambda^k_{\eps},\rho)}
\lesssim\sup_{l}2^{l\alpha}\sum_{l\lesssim j}\|\Delta^\eps_{l,x_i}\Delta^\eps_j f\|_{L^\infty(\Lambda^k_{\eps},\rho)}
  \\&\lesssim\sup_{l}2^{l\alpha}\sum_{l\lesssim j}\|\Delta^\eps_j f\|_{L^\infty(\Lambda^k_{\eps},\rho)}\lesssim\sup_{l}2^{l\alpha}\sum_{l\lesssim j}2^{-j\alpha}\| f\|_{\bC^{\alpha,\eps}(\Lambda_{\eps}^k,\rho)}\lesssim\| f\|_{\bC^{\alpha,\eps}(\Lambda_{\eps}^k,\rho)},
 \end{align*}
 where in the first inequality we used $\varphi_j(\xi_1,...,\xi_k)\varphi_{l}(\xi_i)\neq0$ only if $l\lesssim j$ and in the last inequality we use $\alpha>0$. For the converse statement we have
  \begin{align*}&\| f\|_{\bC^{\alpha,\eps}(\Lambda_{\eps}^k,\rho)}=\sup_{j}2^{j\alpha}\|\Delta^\eps_{j}f\|_{L^\infty(\Lambda^k_{\eps},\rho)}
  \lesssim\sup_{j}2^{j\alpha}\sum_{\substack{-1\leq j_i\leq j_\eps,\\ i=1,\dots , k}}\Big\|\Delta_{j}^\eps(\prod_{i=1}^k\Delta_{j_i,x_i}^\eps )f\Big\|_{L^{\infty,\eps}(\Lambda_\eps^k,\rho)}
  \\\lesssim &\sum_i\sup_{j}\sum_{ j_i\backsim j}2^{\alpha  j}\Big\|\Delta_{j}^\eps(\prod_{i=1}^k\Delta_{j_i,x_i}^\eps )f\Big\|_{L^{\infty,\eps}(\Lambda_\eps^k,\rho)}
  \lesssim \sum_i\sup_{j}\sum_{ j_i\backsim j}2^{\alpha  j}\Big\|\Delta_{j_i,x_i}^\eps f\Big\|_{L^{\infty,\eps}(\Lambda_\eps^k,\rho)}\lesssim\sum_{i=1}^k\|f\|_{\bC_{x_i}^{\alpha,\eps}(\Lambda_{\eps}^k,\rho)}
 \end{align*}
where  we used the fact that  $\varphi_j(\xi_1,...,\xi_k)\Pi_{i=1}^k\varphi_{j_i}(\xi_i)\neq0$ only if there exists one $j_i\backsim j$ in the third inequality. Thus the second result follows.
\end{proof}

By similar argument as in the proof of \cite[Lemma A.16]{GH18a} and \cite[Proposition 3.6]{MP19}, we obtain the following estimate. We recall that we view all the functions on $\Lambda_{M,\eps}$ as periodic functions on $\Lambda_{\eps}$.

\bl\label{lem:A2} For $\alpha\in \mR, p,q\in [1,\infty]$, $d=2$
\begin{align}\label{eq:sch}\|(m-\Delta_\eps)^{-1}f\|_{B^{\alpha+2,\eps}_{p,q}(\rho)}\lesssim \|f\|_{B^{\alpha,\eps}_{p,q}(\rho)}.\end{align}
Moreover, set
$$\cJ f(x_1,x_2)=\int_{\Lambda_{M,\eps}} C_{M,\eps}(x_1-z)C_{M,\eps}(x_2-z)f(z)\dif z,$$
and
$$\cJ_1 f(x_1,x_2,x_3)=\int_{\Lambda_{M,\eps}}  C_{M,\eps}(x_1-z)C_{M,\eps}(x_2-z)C_{M,\eps}(x_3-z)f(z)\dif z.$$
If $\alpha<0$, $\gamma>0$, then for $\rho(x_1,x_2)=\rho_1(x_1)\rho_1(x_2)$ with $\rho_1\lesssim1$
\begin{align}\label{eq:sch2}
\| \cJ f\|_{\bC^{\alpha+2-\gamma,\eps}(\rho)}\lesssim \|f\|_{\bC^{\alpha,\eps}(\rho_1)},
\end{align}
and for $\rho(x_1,x_2,x_3)=\rho_1(x_1)\rho_1(x_2)\rho_1(x_3)$  with $\rho_1\lesssim1$
\begin{align}\label{eq:sch3}
\| \cJ_1 f\|_{\bC^{\alpha+2-\gamma,\eps}(\rho)}\lesssim \|f\|_{\bC^{\alpha,\eps}(\rho_1)}.
\end{align}
Here all the proportional constants are independent of $\eps$ and $M$.
\el
\begin{proof}
	 We only give the proof of  \eqref{eq:sch2};  \eqref{eq:sch} and \eqref{eq:sch3} follow in the same way. Since $C_{M,\eps}=\sum_{l\in M\mathbb{Z}^2}C_\eps(\cdot+l),$ it is easy to see that
\begin{equs}
\int_{\Lambda_{M,\eps}} C_{M,\eps}(x_1-z)C_{M,\eps}(x_2-z)f(z)\dif z&= \int_{\Lambda_{\eps}}  C_\eps(x_1-z) C_{M,\eps}(x_2-z)f(z)\dif z\\
&=\int_{\Lambda_{\eps}}  C_{M,\eps}(x_1-z) C_{\eps}(x_2-z)f(z)\dif z,
\end{equs}
with $f$ being the periodic extension from $\Lambda_{M,\eps}$ to $\Lambda_\eps$.
By the definition of Besov spaces it is sufficient to prove for $-1\leq j\leq j_\eps$
\begin{align}\label{pf:1}
\|\Delta_j^\eps \cJ f\|_{L^\infty(\rho)} \lesssim 2^{(-2+\gamma-\alpha)j}\| f\|_{\bC^{\alpha,\eps}(\rho_1)}.
\end{align}

We have for $\xi,\eta\in \eps^{-1}\mathbb{T}^2$
$$\mathcal{F}(\cJ f)(\xi,\eta)=\mathcal{F}C_\eps(\xi)\mathcal{F}C_{M,\eps}(\eta)\mathcal{F}f(\xi+\eta)=\mathcal{F}C_{M,\eps}(\xi)\mathcal{F}C_{\eps}(\eta)\mathcal{F}f(\xi+\eta),$$
which combined with $\sum_{-1\leq l\leq j_\eps}\varphi_l=1$ implies that
\begin{align*}
\Delta^\eps_j\cJ f
&=\sum_{-1\leq k,i,j\leq j_\eps}\mathcal{F}^{-1}\Big(\varphi^\eps_j(\xi,\eta)\varphi^\eps_k(\xi)\varphi^\eps_i(\eta)\mathcal{F}C_\eps(\xi)\mathcal{F}C_{M,\eps}(\eta)\mathcal{F}f(\xi+\eta)\Big)
\\
&=\sum_{k\sim j,i\lesssim j}\mathcal{F}^{-1}\Big(\varphi^\eps_j(\xi,\eta)\varphi^\eps_k(\xi)\varphi^\eps_i(\eta)\mathcal{F}C_\eps(\xi)\mathcal{F}C_{M,\eps}(\eta)\mathcal{F}f(\xi+\eta)\Big)
\\&+\sum_{i\sim j,k\lesssim j}\mathcal{F}^{-1}\Big(\varphi^\eps_j(\xi,\eta)\varphi^\eps_k(\xi)\varphi^\eps_i(\eta)\mathcal{F}C_{M,\eps}(\xi)\mathcal{F}C_{\eps}(\eta)\mathcal{F}f(\xi+\eta)\Big)
\\&=J_1+J_2.
\end{align*}
Here we used the fact that on the support of $\varphi_j^\eps$, $k\sim j$ or $i\sim j$. 
In the following we only consider $J_1$ and the estimate for $J_2$ follows similarly. We have
\begin{align*}
J_1
&=\sum_{-1\leq l\leq j_\eps}\sum_{k\sim j,i\lesssim j}\mathcal{F}^{-1}\Big(\varphi^\eps_j(\xi,\eta)\varphi^\eps_k(\xi)\varphi^\eps_i(\eta)\mathcal{F}C_\eps(\xi)\mathcal{F}C_{M,\eps}(\eta)\varphi^\eps_l(\xi+\eta)\mathcal{F}f(\xi+\eta)\Big)
\\
&=\sum_{ l\lesssim j}\sum_{k\sim j,i\lesssim j}\mathcal{F}^{-1}\Big(\varphi^\eps_j(\xi,\eta)\varphi^\eps_k(\xi)\varphi^\eps_i(\eta)\mathcal{F}C_\eps(\xi)\mathcal{F}C_{M,\eps}(\eta)\varphi^\eps_l(\xi+\eta)\mathcal{F}f(\xi+\eta)\Big)
\\
&=\sum_{ l\lesssim j}\sum_{k\sim j,i\lesssim j}\mathcal{F}^{-1}\Big(\varphi^\eps_j(\xi,\eta)\varphi^\eps_k(\xi)\varphi^\eps_i(\eta)\mathcal{F}C_\eps(\xi)\mathcal{F}C_{M,\eps}(\eta)\Big)*_\eps \mathcal{F}^{-1}\Big(\varphi^\eps_l(\xi+\eta)\mathcal{F}f(\xi+\eta)\Big)
\\
&\eqdef\sum_{ l\lesssim j} I_1*_\eps I_2.
\end{align*}
Here we use the notation $\varphi^\eps_j$ for different spatial dimensions which is a small abuse of notation, and in the second equality we note that on the support of $\varphi^\eps_j$, $|\xi+\eta|\lesssim 2^j$.
Moreover,  by a similar argument as \cite[Proposition 3.6]{MP19} we can use the smear function $\psi$ introduced in the definition of extension operator $\cE^\eps$ and rewrite $I_1$ as the Fourier transform on $\mathbb{R}^4$. More precisely,
\begin{align}I_1=\sum_{k\sim j,i\lesssim j}\mathcal{F}_{\mathbb{R}^4}^{-1}\Big(\psi(\eps \xi,\eps \eta)(\varphi^\eps_j)_{\textrm{ext}}(\xi,\eta)(\varphi^\eps_k)_{\textrm{ext}}(\xi)(\varphi^\eps_i)_{\textrm{ext}}(\eta)\mathcal{F}C_\eps(\xi)\mathcal{F}C_{M,\eps}(\eta)\Big).\end{align}
  Here $(\varphi^\eps_j)_{\textrm{ext}}$ means periodic extension of $\varphi^\eps_j$ from $\eps^{-1}\mathbb{T}^2$ to $\mathbb{R}^2$  and we extend $\mathcal{F}C_{\eps}, \mathcal{F}C_{M,\eps}$ to $\mathbb{R}^2$ by the following formula:
 $$\mathcal{F}C_{\eps}(\xi_1,\xi_2)=\frac{1}{2[m+4(\sin^2(\eps\pi \xi_1)+\sin^2(\eps\pi \xi_2))/\eps^2]}=:\frac{1}{2(m+l^\eps(\xi))},$$
and by Possion summation formula \begin{align}\label{eqcm}\mathcal{F}C_{M,\eps}(\xi)=\frac{1}{M^2}\mathcal{F}C_{\eps}(\xi)\sum_{n\in \frac{1}{M}\mathbb{Z}^2}\delta_{0}(\xi-n),\end{align}
where $\delta_{0}$ is the Dirac measure at $0$.
It is easy to see that $$I_2(x_1,x_2)=\Delta_l^\eps f(x_1)\eps^{-2}\mathbf{1}_{x_1=x_2},, \quad x_1,x_2\in \Lambda_{\eps}.$$
Thus  we use $\rho(x_1)\lesssim\rho(y_1)(1+|x_1-y_1|^\beta)$  for some $\beta>0$ to obtain
\begin{align*}
\|J_1\|_{L^{\infty,\eps}(\rho)}
\lesssim \sup_{y_1,y_2\in\Lambda_\eps} \sum_{ l\lesssim j}\eps^2\sum_{x_1\in \Lambda_\eps}|I_1(y_1-x_1,y_2-x_1)|(1+|y_1-x_1|^\beta)\|\Delta_l^\eps f\|_{L^{\infty,\eps}(\rho_1)}.
\end{align*}
On the other hand since $\alpha<0$
\begin{align*}
\sum_{ l\lesssim j}\|\Delta_l^\eps f\|_{L^{\infty,\eps}(\rho_1)}\lesssim \sum_{ l\lesssim j}2^{-l\alpha}\|f\|_{\bC^{\alpha,\eps}(\rho_1)}\lesssim 2^{-j\alpha}\|f\|_{\bC^{\alpha,\eps}(\rho_1)},
\end{align*}
and
$$I_1(x)=2^{2j}\bar{V}_j(2^jx_1,x_2),$$
with
$$\bar{V}_j=\sum_{k\sim j,i\lesssim j}\mathcal{F}_{\mathbb{R}^4}^{-1}\Big(\psi(\eps 2^j\xi,\eps\eta)(\varphi^\eps_j)_{\textrm{ext}}(2^j\xi,\eta)(\varphi^\eps_k)_{\textrm{ext}}(2^j\xi)(\varphi^\eps_i)_{\textrm{ext}}(\eta)\mathcal{F}C_\eps(2^j\xi)\mathcal{F}C_{\eps,M}(\eta)\Big).
$$
Now it suffices to prove for $N\in \mathbb{N}$
\begin{align}\label{eq:I1}(1+|x_1|^2)^N\bar{V}_j(x)\lesssim 2^{(-2+\gamma)j}.\end{align}
Note that by \eqref{eqcm}
\begin{align*}
	\bar{V}_j(x_1,x_2)=\frac{1}{M^2}\sum_{k\sim j,m\lesssim j}\sum_{n\in \frac{1}{M}\mathbb{Z}^2}\int_{\mathbb{R}^2}&\psi(\eps 2^j\xi,\eps n)(\varphi_j^\eps)_{\mathrm{ext}}(2^j\xi,n)(\varphi^\eps_k)_{\textrm{ext}}(\xi)(\varphi^\eps_m)_{\textrm{ext}}(n)\\&\mathcal{F}C_\eps(2^j\xi)\mathcal{F}C_{\eps}(n) e^{2\pi \iota(\xi x_1+nx_2)}\dif \xi_1.
\end{align*}
By \cite[Lemma 3.5]{MP19} and using $l_\eps(2^j\xi)=2^{2j}l_{\eps 2^j} (\xi)$, $k\sim j$ and $\eps 2^j\lesssim 1$
$$\Big|\partial_{\xi}^N\Big(\psi(\eps 2^j\xi,\eps n)(\varphi^\eps_j)_{\mathrm{ext}}(2^j\xi,n)(\varphi^\eps_k)_{\textrm{ext}}(2^j\xi)\mathcal{F}C_\eps(2^j\xi)\Big)\Big|\lesssim 2^{-2j}1_{|\xi|\lesssim1},$$
which combined with
\begin{align*}
&|(1+|x_1|^2)^N\bar{V}_j(x)|\\\lesssim&\sum_{k\sim j,m\lesssim j}\frac{1}{M^2}\sum_{n\in \frac{1}{M}\mathbb{Z}^2}\Big|\int  e^{2\pi \iota x_1\cdot \xi}(1-\Delta_{\xi})^N\Big(\psi(\eps 2^j\xi,\eps n)(\varphi^\eps_j)_{\textrm{ext}}(2^j\xi,n)\\&\qquad\qquad\qquad\times(\varphi^\eps_k)_{\textrm{ext}}(2^j\xi)\mathcal{F}C_{\eps}(2^j\xi)\Big)\dif \xi\Big|
\mathcal{F}C_{\eps}(n) (\varphi^\eps_m)_{\textrm{ext}}(n),
\end{align*}
 implies that for $\gamma>0$
$$|(1+|x_1|^2)^N\bar{V}_j(x)|\lesssim 2^{-2j}\Big(\frac{1}{M^2}\sum_{n\in \frac{1}{M}\mathbb{Z}^2,|n|\lesssim 2^j}\frac{1}{m+|n|^2}\Big)\lesssim 2^{(-2+\gamma)j}.$$
Thus \eqref{eq:I1} and \eqref{pf:1} holds, which implies \eqref{eq:sch2}.
\end{proof}

 We recall the following results  for discrete Besov embedding, duality, interpolation and all the results hold for $\eps=0$, i.e. the continuous setting.
The following interpolation inequality are used frequently, which is an easy consequence of H\"older's inequality and the corresponding definition.
(see \cite[Lemma A.3]{GH18a} for the proof).
\bl\label{Le32}
Let $\rho$ be a polynomial weight and $\theta\in[0,1]$. Let $\alpha,\alpha_1,\alpha_2\in\mR$ and
$\delta,\delta_1,\delta_2\in \R$ satisfy
$$
\delta=\theta\delta_1+(1-\theta)\delta_2,\ \alpha=\theta \alpha_1+(1-\theta)\alpha_2,
$$
and $p,q,p_1,q_1,p_2,q_2\in[1,\infty]$ satisfy
$$
\tfrac{1}{p}=\tfrac{\theta}{p_1}+\tfrac{1-\theta}{p_2},\ \ \tfrac{1}{q}=\tfrac{\theta}{q_1}+\tfrac{1-\theta}{q_2}.
$$
Then it holds that
\begin{align}\label{DQ1}
\|f\|_{B^{\alpha,\eps}_{p,q}(\rho^\delta)}\leq \|f\|_{B^{\alpha_1,\eps}_{p_1,q_1}(\rho^{\delta_1})}^\theta\|f\|_{B^{\alpha_2,\eps}_{p_2,q_2}(\rho^{\delta_2})}^{1-\theta}.
\end{align}
\el

\bl\label{Le:du}
Let $\rho$ be a polynomial weight. Let $\alpha\in\mR$
and $p_1,q_1,p_2,q_2\in[1,\infty]$ satisfy
$$
1=\tfrac{1}{p_1}+\tfrac{1}{p_2},\ \ 1=\tfrac{1}{q_1}+\tfrac{1}{q_2}.
$$
Then it holds that
$$\<f,g\>_\eps\lesssim \|f\|_{B^{\alpha,\eps}_{p_1,q_1}(\rho)}\|g\|_{B^{-\alpha,\eps}_{p_2,q_2}(\rho^{-1})}.$$
\el
\begin{proof} See \cite[Lemma A.2]{GH18a}.
	\end{proof}

We recall the following Besov embedding theorems (c.f.  \cite[Lemma 2.22]{MP19}):
\vskip.10in
\bl\label{lem:emb}
(i) Let $1\leq p_1\leq p_2\leq\infty$ and $1\leq q_1\leq q_2\leq\infty$, and let $\alpha\in\mathbb{R}$, $\rho_2\lesssim \rho_1$. Then $B^{\alpha_1,\eps}_{p_1,q_1}(\rho_1)$ is continuously embedded in $B^{\alpha_2,\eps}_{p_2,q_2}(\rho_2)$ for $\alpha_2-\tfrac{d}{p_2}\leq \alpha_1-\tfrac{d}{p_1}$.  Furthermore, if $\alpha_2-\tfrac{d}{p_2}< \alpha_1-\tfrac{d}{p_1}$ and $\lim_{|x|\to\infty}\rho_2(x)/\rho_1(x)=0$, the embedding is compact.

(ii) Let  $1\leq p<\infty$, $\eps>0$. Then $B^{0,\varepsilon}_{2,1}(\rho)\subset L^{2,\eps}(\rho)\subset B^{0,\varepsilon}_{2,2}(\rho)$ and
$ B^{0,\varepsilon}_{p,1}(\rho)\subset L^{p,\eps}(\rho)\subset B^{0,\varepsilon}_{p,\infty}(\rho)$.

(iii) Let $\gamma\in (0,1),p\in[1,\infty]$. Then
$$\|f\|_{B^{1-\gamma,\eps}_{p,p}}\lesssim\|\nabla_\eps f\|_{B^{-\gamma,\eps}_{p,p}}+\|f\|_{B^{-\gamma,\eps}_{p,p}}.$$
\el
\begin{proof}
See  \cite[Lemma 2.22]{MP19} for the proof of (i). (ii) and (iii) follow from \cite[Lemma A.4, Lemma A.5]{GH18a}.
	\end{proof}

Recall the following result on the bounds for powers of functions (c.f. \cite[Lemma A.7]{GH18a}, \cite[Lemma 4.2]{MP19}).

\bl\label{lem:pow} Let $\alpha>0$. Let $\rho_1, \rho_2$ be polynomial weights. Then for every $\beta>0$ it holds that
$$\|f^2\|_{B^{\alpha,\eps}_{1,1}(\rho_1\rho_2)}\lesssim \|f\|_{L^{2,\eps}(\rho_1)}\|f\|_{H^{\alpha+2\beta,\eps}(\rho_2)}.$$
For $p\in [1,\infty]$, $\gamma<0<\alpha$ with $\alpha+\gamma>0$ and $\beta>0$ it holds that
$$\|fg\|_{B^{\gamma,\eps}_{p,\infty}(\rho_1\rho_2)}\lesssim\|f\|_{B^{\alpha,\eps}_{p,\infty}(\rho_1)}\|g\|_{\bC^{\gamma+\beta,\eps}(\rho_2)}.$$
\el


Now we prove the following Schauder estimate for discrete heat semigroup $P_t^\eps=e^{t(\Delta_\eps-m)}$:

\bl\label{lem:Sch} Let $\rho$ be a polynomial weight and $\alpha\in\mathbb{R}, p\in[1,\infty],T>0$. Then it holds that for $f\in L^p_TB^{\alpha-2,\eps}_{p,p}(\rho),g\in B^{\alpha-2/p,\eps}_{p,p}(\rho)$
$$\Big\|\int_0^\cdot P_{\cdot-s}^\eps f\dif s\Big\|_{L^p_TB^{\alpha,\eps}_{p,p}(\rho)}\lesssim \|f\|_{L^p_TB^{\alpha-2,\eps}_{p,p}(\rho)},$$
and
$$\| P_{\cdot}^\eps g\|_{L^p_TB^{\alpha,\eps}_{p,p}(\rho)}\lesssim\|g\|_{B^{\alpha-2/p,\eps}_{p,p}(\rho)},$$
where  the proportional constants are independent of $T$ and $\eps$.
\el
\begin{proof} By \cite[Lemma A.16]{GH18a} we have that for $c>0$
	\begin{align*}
	\|\Delta^\eps_j P_{t}^\eps f\|_{L^{p,\eps}(\rho)}\lesssim e^{-ct(2^{2j}+m)}\|\Delta^\eps_j f\|_{L^{p,\eps}(\rho)},
	\end{align*}
	with the proportional constant independent of $\eps$ and $t$\footnote{\cite[Lemma A.16]{GH18a} only proves the result for $p=1$ and the same result also holds for general $p\in [1,\infty]$ by exactly the same argument}, which implies that
		\begin{align*}&\Big\|\int_0^\cdot P_{\cdot -s}^\eps f\Big\|^p_{L^p_TB^{\alpha,\eps}_{p,p}(\rho)} =\sum_j2^{\alpha pj}\int_0^T \Big\|\int_0^t\Delta^\eps_jP_{t-s}^\eps f\dif s\Big\|^p_{L^{p,\eps}(\rho)}\dif t
	\\\lesssim &\sum_j2^{\alpha jp}\int_0^T\Big(\int_0^t\|\Delta^\eps_j P_{t-s}^\eps f\|_{L^{p,\eps}(\rho)}\dif s\Big)^p\dif t
	\\\lesssim &\sum_j2^{\alpha jp}\int_0^T\Big(\int_0^te^{-c(t-s)(2^{2j}+m)}\|\Delta^\eps_j f\|_{L^{p,\eps}(\rho)}\dif s\Big)^p\dif t
	\\\lesssim &\sum_j2^{\alpha jp}\int_0^T\Big(\int_0^te^{-c(t-s)(2^{2j}+m)}\|\Delta^\eps_j f\|^p_{L^{p,\eps}(\rho)}\dif s\Big)\Big(\int_0^te^{-c(t-s)(2^{2j}+m)}\dif s\Big)^{p-1}\dif t
	\\\lesssim &\sum_j2^{\alpha jp}2^{-2j(p-1)}\int_0^T\int_s^Te^{-c(t-s)(2^{2j}+m)}\dif t\|\Delta^\eps_j f\|^p_{L^{p,\eps}(\rho)}\dif s
	\\\lesssim &\sum_j2^{(\alpha-2) jp}\int_0^T\|\Delta^\eps_j f\|^p_{L^{p,\eps}(\rho)}\dif s,
	\end{align*}
	where we change the order of integral in the fourth inequality. The first result then follows form the definition of Besov space.
Similarly we have
\begin{align*}
	\| P_{\cdot }^\eps g\|^p_{L^p_TB^{\alpha,\eps}_{p,p}(\rho)}
	&\leq \sum_{j}2^{\alpha jp}\int_0^T\|\Delta^\eps_j P_{t}^\eps g\|_{L^{p,\eps}(\rho)}^p\dif t
	\\
	&\lesssim \sum_{j}2^{\alpha jp}\int_0^Te^{-ctp(2^{2j}+m)}\|\Delta^\eps_j g\|^p_{L^{p,\eps}(\rho)}\dif t
	\\
	&\lesssim \sum_j2^{jp\alpha-2j}\|\Delta^\eps_j g\|^p_{L^{p,\eps}(\rho)}.
	\end{align*}
	Thus the result follows.
\end{proof}

Now we recall the following property of $\cE^\eps$ in Besov spaces.
\bl\label{euniform} For any $\alpha\in\mathbb{R}, p,q\in[1,\infty]$ the family of extension operators
$$\cE^\eps:B^{\alpha,\eps}_{p,q}(\rho)\rightarrow B^{\alpha}_{p,q}(\rho),\qquad\cE_k^\eps:\bC_s^{\alpha,\eps}(\rho)\rightarrow \bC_s^{\alpha}(\rho),$$
defined in \eqref{def:E} and \eqref{def:Ek} are uniformly bounded in $\eps$.
\el
\begin{proof}
	See \cite[Lemma 2.24]{MP19} for the proof of the  result for $\cE^\eps$. The  result for $\cE^\eps_k$ follows from similar argument.
	\end{proof}

\bl\label{lem:Epro}
For $p\in [1,\infty]$, $\gamma<0<\alpha$ with $\alpha+\gamma>0$ and $\beta>0$ it holds that
$$\|\cE^\eps(fg)-\cE^\eps f\cE^\eps g\|_{B^{\gamma,\eps}_{p,\infty}(\rho_1\rho_2)}\lesssim o(\eps)\|f\|_{B^{\alpha,\eps}_{p,\infty}(\rho_1)}\|g\|_{\bC^{\gamma+\beta,\eps}(\rho_2)}.$$
\el
\begin{proof}
	See \cite[Lemma 4.2]{MP19}.
	\end{proof}

We prove the following result by the same argument as in \cite[Theorem 5.13]{MP19}.

\bl\label{Cepro} Assume that $$(m-\Delta_\eps)u_\eps=f_\eps$$ and $\rho$ be some polynomial weight.
 It holds that for $\alpha\in \mR$ and $\delta>0$
$$\|\cE^\eps u_\eps-(m-\Delta)^{-1}\cE^\eps f_\eps\|_{\bC^{\alpha+2-\delta}(\rho)}\lesssim \eps^\delta \|f_\eps\|_{\bC^{\alpha,\eps}(\rho)}.$$
\el
\begin{proof}It is easy to see that
	\begin{align}\label{eq:m}(m-\Delta_\eps)\cE^\eps u_\eps=\cE^\eps(m-\Delta_\eps)u_\eps=\cE^\eps f_\eps.\end{align}
	Moreover, by \cite[Lemma 3.4]{MP19} and Lemma \ref{euniform}, \eqref{eq:sch} we know for any $\delta>0$
	$$\|(m-\Delta_\eps)\cE^\eps u_\eps-(m-\Delta)\cE^\eps u_\eps\|_{\bC^{\alpha-\delta}(\rho)}\lesssim \eps^\delta \|\cE^\eps u_\eps\|_{\bC^{\alpha+2}(\rho)}\lesssim \eps^\delta \|u_\eps\|_{\bC^{\alpha+2,\eps}(\rho)}\lesssim \eps^\delta \|f_\eps\|_{\bC^{\alpha,\eps}(\rho)},$$
	which implies the result by \eqref{eq:m} and Schauder estimate for $(m-\Delta)^{-1}$.
\end{proof}

\bibliographystyle{alphaabbr}
\bibliography{Reference}

\end{document}